\documentclass[a4paper,10pt]{article}

\usepackage{amsmath}

%%% PACCHETTI GEOMETRY e HYPERREF/CLEVEREF
\usepackage{geometry}
\geometry{left=30mm, right=30mm, top=40mm, bottom=40mm}
\usepackage[backref=page]{hyperref}
\usepackage{cleveref}
%%%%%%%%%%%%%%%%%%%%%%%%%%%%%%%%%%%%%%%%%%%

\usepackage{pifont}
\usepackage{amsfonts, amssymb, bm, cases}

\usepackage{xcolor}
\hypersetup{
  colorlinks,
  linkcolor={red},
  citecolor={blue},
  urlcolor={black}
}

\newcommand{\conv}[1]{\text{co}(#1)}
\renewcommand{\span}[1]{\text{span}(#1)}
\def\endex{\hfill $\blacksquare$}

\newenvironment{proof}{\noindent {\it Proof:}}{\hfill $\Box$ \vspace{0.3cm}}

\def\N{\mathbb{N}}

\def\Q{\mathbb{Q}}
\def\R{\mathbb{R}}
\def\A{\mathbb{A}}
\def\C{\mathbb{C}}

\newtheorem{theorem}{Theorem}
\newtheorem{lemma}{Lemma}
\newtheorem{proposition}{Proposition}
\newtheorem{corollary}{Corollary}

\newtheorem{definition}{Definition}

\newtheorem{remark}{Remark}

\newtheorem{example}{Example}

\usepackage{amssymb}
\usepackage{amsfonts}
\usepackage{mathrsfs} 
\usepackage{graphicx}
\usepackage{color}
\usepackage{ulem}
\usepackage{bm}

%%% MACROS

\title{\bf On the configurations of four spheres supporting the vertices of a tetrahedron}
%\title{\bf Generalized Johnson's theorem for piramidal tetrahedra}
%\title{\bf Generalized Johnson's theorem for quasi-regular tetrahedra}
%\title{\bf Generalized Johnson's theorem for quasi-regular piramidal tetrahedra}

\begin{document}

%\author{Marco Longinetti$^1$ and Simone Naldi$^{2,3}$}
\author{M. Longinetti$^{1}$ \\[.2em] \href{marco.longinetti@unifi.it}{\small (marco.longinetti@unifi.it)}
  \and S. Naldi$^{2,3}$ \\[.2em] \href{simone.naldi@unilim.fr}{\small (simone.naldi@unilim.fr)}}

\footnotetext[1]{Dipartimento DIMAI, Università degli Studi di Firenze, Viale Morgagni 67/a, 50138, Firenze, Italy.}
\footnotetext[2]{Université de Limoges, CNRS, XLIM, UMR 7252, F-87000 Limoges, France.}
\footnotetext[3]{Sorbonne Université, CNRS, LIP6, F-75005 Paris, France.}

\maketitle

\begin{abstract}
  \noindent
  A reformulation of the three circles theorem of Johnson \cite{johnson2} with distance
  coordinates to the vertices of a triangle is explicitly represented in a polynomial system
  and solved by symbolic computation. A similar polynomial system in distance coordinates to
  the vertices of a tetrahedron $T \subset \R^3$ is introduced to represent the configurations
  of four spheres of radius $R^*$, which intersect in one point, each sphere containing three vertices
  of $T$ but not the fourth one. This problem is related to that of computing the largest value
  $R$ for which the set of vertices of $T$ is an $R$-body \cite{MLVRbodies}. For triangular pyramids
  we completely describe the set of geometric configurations with the required
  four balls of radius $R^*$. The solutions obtained by symbolic computation show that triangular
  pyramids are splitted into two different classes: in the first one $R^*$ is unique, in the
  second one three values $R^*$ there exist. The first class can be itself subdivided into two
  subclasses, one of which is related to the family of $R$-bodies.
\end{abstract}

\noindent {\bf Keywords.}
Tetrahedra, triangular pyramids, three circles theorem, $R$-bodies, distance geometry, Cayley-Menger determinants.

\vspace{0.2cm}
\noindent {\bf Mathematics Subject Classification (MSC2020).}
68W30, 52A30, 51FXX.

\section{Introduction}
\label{sec_intro}

Let $V = \{v_0,v_1,v_2,v_3\} \subset \R^3$ be the set of vertices of a simplex $T=\conv{V}$ with
circumradius $R_T$. Assume that there exist four distinct spheres $S_0,S_1,S_2,S_3 \subset \R^3$ of the
same radius $R^*$ such that
\begin{itemize}
\item[($i$)] $S_j$ contains the vertices $v_i$, for all $i \neq j$,
  \vspace{-0.2cm}
\item[($ii$)] $S_j$ is the boundary of an open ball not containing $v_j$,
  \vspace{-0.2cm}
\item[($iii$)] the intersection of the four spheres is one point $\{O^*\}$.
\end{itemize}
When $T$ is the regular simplex, it is not difficult to prove by a symmetry argument, see also
\cite[Thm 5.6]{MLVRbodies}, that there exist four spheres of radius $R^* = \frac32 R_T$ which
intersect in $O^*$, the center of $T$ (\Cref{fig:introduction}, on the right).
It turns out that $O^*$ does not belong to any open ball of radius greater than
$R^*$ and not containing the set $V$.
When $T \subset \R^n$ is a general simplex, $n\geq 3$, the question whether a configuration of
$n+1$ distinct spheres of radius $R^*>R_T$ satisfying properties ($i$)-($iii$) exists,
is open in its full generality.
In this paper we propose a method based on symbolic computation for solving this problem on
a special class of tetrahedra, in the case $n=3$.

\begin{figure}[!h]
  \begin{center}
    \includegraphics[scale=1]{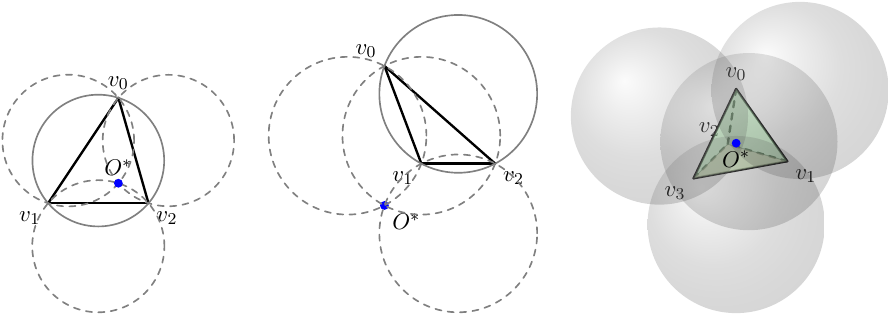}
  \end{center}
  \caption{Configurations of three circles as in \Cref{johnson_theorem}
    for acute-angled (left) and obtuse-angled (center) plane triangles.
    Four spheres satisfying property ($i$) for a simplex in $\R^3$ (right).}
  \label{fig:introduction}
\end{figure}

%\label{rL}

This geometric configuration is a crucial example in \cite{MLVRbodies} for some sentences and properties
that hold in the class of $R$-bodies in $\R^3$. An {$R$-body} is the complement of the union of
open balls with fixed radius $R$; the smallest $R$-body containing a set $V \subset \R^n$ is called
the {$R$-hulloid} of $V$ and denoted $co_R(V)$.  For a set $V$ of vertices of a simplex $T$ let us call
$R^*_T$ the supremum of all values $r$ such that $V$ is an $r$-body; therefore for $r < R^*_T$ the
complement of $V$ is union of open balls of radius $r$.
The determination of $R^*_T$ for a general simplex $T \subset \R^n$ is closely related to configurations of
$n+1$ spheres satisfying ($i$), ($ii$) and ($iii$).

%Rephrasing in terms of $R$-bodies, \cite[Thm.~5.6]{MLVRbodies} implies that in case $T$ is the regular
%simplex in $\R^n$, $n\geq 3$, the value $R^*_T$ is the radius of $n$ balls whose boundaries satisfy the
%conditions ($i$), ($ii$) and ($iii$): in particular $R^*_T = \frac{n}{2} R_T$ and its set of vertices $V$
%is an $r$-body if and only if $0 < r < \frac{n}{2} R_T$, whereas its $R^*_T$-hulloid is $V \cup \{O^*\}$.

The planar case ($n=2$), completely solved in \cite[Thm.~4.2]{MLVRbodies}, is somehow special.
Given a triangle $T = co(V) \subset \R^2$, it consists in the problem of determining three circles
of equal radius, each containing two of the three vertices of $T$, but not the third one, and meeting
in one point. In this special case one obtains the equality $R^*_T = R_T$, and that the set $V$ is
a $r$-body if and only if $r \leq R_T$. Indeed, this special property of dimension $2$ follows
directly from a classical result in Euclidean geometry of 1916, namely Johnson's three circles theorem:

\begin{theorem}[Johnson {\cite{johnson2}}]\label{johnson_theorem}
  Assume three distinct plane circles of the same radius $R$ intersect in a point $O^*$ and let $V=\{v_0,v_1,v_2\}$
  be the other intersection points. Then the circle through $V$ has radius $R$ and $O^*$ is the orthocenter of
  $T=\conv{V}$.
\end{theorem}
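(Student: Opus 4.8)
The plan is to reduce the whole statement to one elementary vector identity. First I would place $O^*$ at the origin and denote by $c_0, c_1, c_2$ the centers of the three circles $C_0, C_1, C_2$, labelling them so that $v_k$ is the second intersection point (other than $O^*$) of the two circles $C_i$ and $C_j$, where $\{i,j,k\} = \{0,1,2\}$. Since each circle has radius $R$ and passes through $O^*$, we immediately get $\|c_0\| = \|c_1\| = \|c_2\| = R$. The key observation is that for a fixed pair, say $C_i$ and $C_j$, the four points $O^*, c_i, c_j$ and the candidate $c_i + c_j$ form a rhombus of side $R$: indeed $\|(c_i + c_j) - c_i\| = \|c_j\| = R$ and $\|(c_i + c_j) - c_j\| = \|c_i\| = R$, so $c_i + c_j$ lies on both circles. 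As two distinct circles meet in at most two points, and $c_i + c_j \neq O^*$ precisely when the ``other'' intersection point exists, I conclude
\[
  v_k = c_i + c_j, \qquad \{i,j,k\} = \{0,1,2\}.
\]

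From this identity both assertions follow by short computations. For the circumradius I would propose the point $Q = c_0 + c_1 + c_2$ as circumcenter and check $\|Q - v_k\| = \|c_k\| = R$ for each $k$; since $Q$ is then equidistant from $v_0, v_1, v_2$, it is the (unique) circumcenter and the circumradius equals $R$, proving the first claim. For the orthocenter claim I would verify that $O^*$ lies on each altitude by a direct dot product: the vector from $O^*$ to $v_k$ is $c_i + c_j$, while the opposite side $v_i v_j$ has direction $v_j - v_i = c_i - c_j$, and
\[
  (c_i + c_j) \cdot (c_i - c_j) = \|c_i\|^2 - \|c_j\|^2 = R^2 - R^2 = 0.
\]
Hence the segment from $O^*$ to $v_k$ is perpendicular to the opposite side for every $k$, so $O^*$ is the common point of the three altitudes, i.e.\ the orthocenter of $T = \conv{V}$.

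The only genuine obstacle is to make sure the configuration is nondegenerate, so that $T$ is an honest triangle and the notions of circumradius and orthocenter are meaningful. This turns out to be automatic: the three circles being distinct forces the centers $c_0, c_1, c_2$ to be distinct, and three distinct points lying on the common circle of radius $R$ about $O^*$ can never be collinear (a line meets that circle in at most two points); hence $v_0, v_1, v_2$ are affinely independent. I would also record explicitly the hypothesis, implicit in the phrase ``the other intersection points,'' that each pair of circles meets in two distinct points, which is exactly the condition $c_i + c_j \neq O^*$ guaranteeing $v_k \neq O^*$ and validating the rhombus step.
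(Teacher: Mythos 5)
Your proof is correct, and it is complete on the degenerate cases (distinctness of the centers, non-collinearity of the $v_k$, and the tangency case $c_i+c_j=O^*$), but it takes a genuinely different route from the paper. You give the classical synthetic/vector argument: after translating $O^*$ to the origin, the rhombus identity $v_k=c_i+c_j$ does all the work, producing the circumcenter explicitly as $c_0+c_1+c_2$ and the altitude property from $(c_i+c_j)\cdot(c_i-c_j)=\lVert c_i\rVert^2-\lVert c_j\rVert^2=0$; this is essentially the elementary proof the paper only cites (Johnson's original note). The paper instead reproves the theorem, in the equivalent form of its Proposition~4, by purely algebraic means: the configuration is encoded in distance coordinates as the parametric polynomial system \eqref{syst_gen_case} built from Cayley--Menger determinants, the admissible radius is isolated by elimination (Lemma~\ref{lem_generalABC_radius}: $\rho=0$, $\theta=0$, or $\rho=ABC/\theta$), and the solution set is split by primary decomposition in Macaulay2 into the orthocenter component and a degree-six component that Corollary~\ref{cor_generalABC_circumcircle} identifies as the circumcircle. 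What your approach buys is brevity, self-containedness, and structural insight (the rigid rhombus configuration, with the circumcenter and orthocenter given by closed vector formulas); what the paper's approach buys is a method that does not depend on a planar coincidence: the same Cayley--Menger formulation carries over verbatim to tetrahedra in $\R^3$ (system \eqref{syst:gen:tetra}), where the sum-of-centers trick and the uniqueness of the radius both fail, and where the paper's main results live. Your argument also silently exhibits why dimension two is special -- the second intersection points of the pairs of circles automatically lie at distance $R$ from $c_0+c_1+c_2$ -- whereas the paper demonstrates by example (e.g.\ $\rho=\tfrac58$ for the regular tetrahedron) that no such coincidence persists in higher dimension.
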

%As consequence  $R^*_T$ is the circumradius of $T$.
%The three circles of Theorem \ref{johnson_theorem} are often called the Johnson's circles (see fig. \ref{fig:introduction}).

The four points in $V \cup \{O^*\}$ in \Cref{johnson_theorem} form a rigid configuration in the following sense: every point is the orthocenter of the remaining three. The question whether a generalized version of \Cref{johnson_theorem} holds for $n>2$ is one of the starting points of our work. It is open in its full generality, nevertheless, we give a partial answer for the three-dimensional case in this paper.

Our approach is not based on analytical tools but it is purely algebraic. Indeed the configuration of spheres can be represented by a polynomial system (in some special coordinates), whose coefficients depend on parameters defining the tetrahedron. In particular, we are interested in checking the existence of special configurations of four distinct spheres satisfying properties ($i$) and ($iii$), which have an algebraic nature, and among these, understanding in which case a configuration exists satisfying the addictional property ($ii$), a condition that has a special meaning for the theory of $R$-bodies but that is not algebraic in the parameters of the problem.

\paragraph{Main contributions.}
For a general simplex of $\R^3$ with no special symmetries, the problem of computing the value $R^*_T$ is not straightforward. In \Cref{sec_tetrahedra_general} we present an algebraic formulation for the problem of finding, for a tetrahedron $T \subset \R^3$, the radius $R^*$ of four spheres satisfying conditions ($i$) and ($iii$). This is based on the fact that conditions ($i$) and ($iii$) can be translated into equality constraints (parametric polynomial equations) through the so-called Cayley-Menger determinants, while condition ($ii$) is rather semialgebraic, that is, given by polynomial inequalities.

Whereas the algebraic approach seems challenging for the general class of tetrahedra, we consider in \Cref{sec_tetrahedra} the subclass of triangular pyramids (tetrahedra with a regular base and the fourth vertex equidistant from the base vertices, cf.~\cite{BB}): in this special case the solutions of the polynomial system can be fully characterized. A complete description, depending on the shape of $T$, of all solutions $R^*$ is given in \Cref{prop_piramidal}; their corresponding geometric configurations are described next and the subclass of triangular pyramids with $R^*_T>R_T$ and $O^* \in \text{int}(T)$ is explicitly characterized. We show that for $n=3$, the point $O^*$ defined above is neither unique, nor coincides in general with a remarkable point of the tetrahedron $T$ (except for special cases); in the planar case, by \Cref{johnson_theorem}, $O^*$ is the orthocenter of the triangle, while in $\R^3$ it is neither the orthocenter nor the Monge point of the tetrahedron $T$, except for tetrahedra with special symmetries.

This symbolic approach is considered also in the planar case for a set $V = \{v_0,v_1,v_2\}$ of three non-collinear vertices of a triangle $T = \conv{V}$ with circumradius $R_T$. A polynomial system in distance coordinates is considered in \Cref{sistema polinomial piano} and its algebraic solutions described in \Cref{solutions planar case} and connected with the corresponding geometric configurations in \Cref{planar_geometric}. This new approach carries out a new proof of a more general formulation of Johnson's Theorem introduced prevously in \Cref{double circles}.

Finally in \Cref{sec_rbodies} we consider any simplex $T \subset \R^3$ and make the connection of the main problem with the theory of R-bodies. In this context, the spheres $S_i$ are the boundary of open balls that are said to be $R$-supporting the set $V$.
In this paper we relax this definition saying that four spheres are $R$-supporting $V$ if they satisfy property ($i$). We conclude with a characterization of $R$-hulloids of triangular pyramids.

$R^*$-body configurations for $V$, special systems of spheres $R^*$-supporting $V$ (see \Cref{def:rbodyconf}),
are related to the R-bodies theory. Their existence was also investigated here via computer algebra
methods for triangular pyramids. In case $R^*=R_T$, the existence of $R^*$-body configurations for $V$
is related to the existence of so-called ``unit sphere systems''. These are special arrangements of $n+2$
spheres in $\R^n$, of equal radius, considered in \cite{MT} and \cite{MT2}, see also \Cref{remark_maehara}
in \Cref{pyramid_geometric}.

%Let us mention that the question of the existence of special arrangements of $n+2$ spheres in $\R^n$, of equal radius, has been addressed in the work \cite{MT} for $n \neq 3$ and completely solved with the remaining case $n = 3$ in \cite{MT2}, following the original proof by Grace.

\paragraph{Paper outline.}
In \Cref{sec_prelim} the basics of computational and distance geometry are introduced, with focus on Cayley-Menger determinants. In \Cref{double circles} a reformulation of the original problem using equations representing double spheres is discussed. The main contributions of the paper are then organized starting from the planar case for a more clear description: \Cref{sec_planetriangles} is completely devoted to the case $n=2$. In \Cref{sec_tetrahedra_general} a polynomial system representing configurations of spheres supporting the vertices of a tetrahedron in $\R^3$ is presented, specialized to the case of regular tetrahedron for which a complete description of the configurations is given. Next a solution for the case $n=3$ for the class of triangular pyramids is given in \Cref{sec_tetrahedra}. In \Cref{sec_rbodies} the connections with the theory of $R$-bodies is presented.

\section{Preliminaries and Notation}
\label{sec_prelim}

\subsection{Basic algebraic geometry}
\label{ssec:alggeometry}
Denote by $K[x_1,\ldots,x_n]$ the ring of polynomials in variables $x_1,\ldots,x_n$ with
coefficients over a field $K$. In this work, we address geometric questions that often depend on
parameters $a_1,\ldots,a_n$ (that are considered free, unless otherwise stated), in which case
$K = \Q(a_1,\ldots,a_k)$; often parameters are specialized to algebraic numbers in which case the
corresponding polynomials have coefficients in some algebraic extension of $\Q$.

We refer to \cite{CLO} for a complete background on computational algebraic geometry and we only
recall the main definitions needed for this work. The complex (resp. real) common vanishing locus
of a family of polynomials $F \subset K[x_1,\ldots,x_n]$ is denoted $\mathscr{Z}(F)$ (resp.
$\mathscr{Z}_{\R}(F)$) and called a ({real}) {algebraic variety}.
Every ideal $I \subset K[x_1,\ldots,x_n]$ is generated by finitely many polynomials $f_1,\ldots,f_s$,
briefly $I=\langle f_1,\ldots,f_s\rangle$ (if $s=1$, $I$ is called principal).
The {radical} of $I$ is the set
$\sqrt{I} = \{f \in K[x_1,\ldots,x_n] : f^m \in I, \exists m \in \N\}$ and of course
$\mathscr{Z}(\sqrt{I}) = \mathscr{Z}(I) = \mathscr{Z}(f_1,\ldots,f_s)$, for
$I=\langle f_1,\ldots,f_s\rangle$. To a set $Z \subset K^n$ one can associate its {vanishing
  ideal} $\mathscr{I}(Z) = \{f \in K[x_1,\ldots,x_n] : f(z)=0, \, \forall z \in Z\}$.
For an algebraically closed field $K$, the operators $\mathscr{Z}$ and $\mathscr{I}$ define a
bijective correspondence between varieties and radical ideals: by Hilbert's Nullstellensatz
\cite[\S 4.1]{CLO}, one has indeed $\mathscr{I}(\mathscr{Z}(I)) = \sqrt{I}$.

A variety $Z$ is called {irreducible} if it is not union of two algebraic varieties properly
contained in $Z$, in which case its vanishing ideal $\mathscr{I}(Z)$ is prime. Every variety is
the finite union of irreducible varieties, called its {irreducible components}.
An ideal $P$ is called {primary} if $fg \in P$ implies either $f \in P$ or $g \in \sqrt{P}$,
in which case one also has that $\sqrt{P}$ is a prime ideal and $\mathscr{Z}(P)$ irreducible.
Every ideal $I$ is decomposable as
finite intersection of primary ideals $I = P_1 \cap P_2 \cap \cdots \cap P_s$, called a {primary
  decomposition} of $I$. Such decomposition in particular yields the irreducible decomposition of the
associated variety: $\sqrt{I} = \sqrt{P_1} \cap \sqrt{P_2} \cap \cdots \cap \sqrt{P_s}$ and
$\mathscr{Z}(P_i)=\mathscr{Z}(\sqrt{P_i})$, $i=1,\ldots,s$ are the irreducible components of $\mathscr{Z}(I)$.
In this work we make use of the software Macaulay2 \cite{M2} for the computation of primary decompositions
and radicals of ideals related to some special polynomial systems.

Finally we often make use of Sturm sequences and of Sturm Theorem, for which we refer to \cite{BPR}.

\subsection{Distance geometry}
\label{ssec:distancegeometry}

For geometric-constrained problems, such as those treated in this work, it is often useful to use {distance
geometry} as a coordinate model instead of the Cartesian coordinates, see for instance \cite{Havel,YZ}. We briefly
recall the main aspects of distance geometry in $\R^n$ and next focus on the case $n=2$ and $n=3$.

Given a set of $n+1$ points $V = \{v_0,\ldots,v_{n}\} \subset \R^n$, denote by $(X_0,\ldots,X_{n})_V$ the vector
of squared Euclidean distances of $v \in \R^n$ to the elements of $V$, meaning that $\lVert v-v_i \rVert_2^2=
X_i$, $i=0,\ldots,n$ and call them the {distance coordinates} of $v$ {with respect to} $V$.
The convex hull of $V$ is $co(V)$.
The following classical rigidity property shows the good definition of distance coordinates to independent sets:
%%%%\footnote{deve esistere già in letteratura, forse in \cite{Havel}}:

\begin{proposition}\label{lem:rigidity}
  Let $V=\{v_0,\ldots,v_n\} \subset \A^n$ be $n+1$ affinely-independents vectors in the real affine space $\A^n$
  and let $P,Q \in \A^n$ with the same distance coordinates $(X_0,\ldots,X_n)_V$ with respect to $V$. Then $P=Q$.
\end{proposition}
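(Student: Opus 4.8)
The plan is to reduce the equality of distance coordinates to a linear system on the displacement vector $P-Q$ and then invoke affine independence. Identify the points of $\A^n$ with vectors of $\R^n$ after fixing an arbitrary origin. By hypothesis $\lVert P - v_i\rVert_2^2 = X_i = \lVert Q - v_i\rVert_2^2$ for every $i=0,\ldots,n$, so it suffices to exploit the $n+1$ equalities $\lVert P - v_i\rVert_2^2 = \lVert Q - v_i\rVert_2^2$.

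First I would expand both squared norms via the polarization identity, writing $\lVert P-v_i\rVert_2^2 = \lVert P\rVert_2^2 - 2\langle P,v_i\rangle + \lVert v_i\rVert_2^2$ and similarly for $Q$. Subtracting the two expressions, the terms $\lVert v_i\rVert_2^2$ cancel and one obtains, for each $i$,
\[
\lVert P\rVert_2^2 - \lVert Q\rVert_2^2 = 2\langle P - Q,\, v_i\rangle.
\]
The left-hand side is a single constant $c$ independent of $i$, so the relation $\langle P-Q,\, v_i\rangle = c/2$ holds simultaneously for all $i=0,\ldots,n$.

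Next I would eliminate the constant by taking differences between the index $i$ and the index $0$: subtracting the equation for $v_0$ from that for $v_i$ yields $\langle P-Q,\; v_i-v_0\rangle = 0$ for every $i=1,\ldots,n$. Since $V$ is affinely independent, the $n$ vectors $v_1-v_0,\ldots,v_n-v_0$ are linearly independent and hence form a basis of $\R^n$. Therefore $P-Q$ is orthogonal to a spanning family of the whole space, which forces $P-Q=0$, that is $P=Q$.

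I do not expect a genuine obstacle here: the one point requiring care is the passage from affine independence of the $n+1$ points to linear independence of the $n$ difference vectors $v_i-v_0$, which is precisely the definition of affine independence and guarantees that these vectors span the direction space of $\A^n$. Everything else is a routine expansion followed by a rank argument; in particular the arbitrary choice of origin is harmless, since only the differences $P-Q$ and $v_i-v_0$ appear in the final relations.
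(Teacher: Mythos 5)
Your proof is correct and complete. Note that the paper itself supplies no proof of this proposition: it is stated as a classical rigidity property of distance geometry, so there is no argument of the authors to compare against. Your reasoning — polarization to get $\lVert P\rVert_2^2-\lVert Q\rVert_2^2 = 2\langle P-Q, v_i\rangle$ for all $i$, differencing against the index $0$ to eliminate the constant, and then using that affine independence of $V$ makes $v_1-v_0,\ldots,v_n-v_0$ a basis of $\R^n$, forcing $P-Q=0$ — is precisely the standard argument one would insert here, and the step you flag (affine independence of the $n+1$ points giving linear independence of the $n$ difference vectors) is handled correctly.
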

%\begin{proof}
%  The proof is classical, we give it for the sake of completeness.
%  Let $H_0 \approx \A^{n-1}$ be the affine hyperplane spanned by $V_0 = V \setminus \{v_0\}$.
%  Consider the $n$ spheres $S_1,\ldots,S_n$ passing through $V_0$ of radii $\sqrt{X_1},\ldots,\sqrt{X_n}$.
%  Their intersection consists of two distinct points $P_1$ and $P_2$. One has $\lVert P_1-v_0\rVert_2 \neq
%  \lVert P_2-v_0\rVert_2$, otherwise $v_0$ would be in the symmetry axis $H_0$, which is not the case by
%  hypothesis. Thus one of the two cases hold: either $P_1=P=Q$ or $P_2=P=Q$.
%\end{proof}

Let now $d_{ij} = \lVert v_i-v_j\rVert_2^2$ be the squared Euclidean dis\-tan\-ce of $v_i$ to $v_j$.
We recall from \cite{Som} that the squared volume of $\conv{V}$ in $\R^n$ is
a constant multiple of the determinant of the {Cayley-Menger matrix}
%%%% CONSTANT: $\frac{(-1)^{n+1}}{2^n (n!)^2}$
\begin{equation}
  \text{CM}_V =
  \begin{bmatrix}
    0      & 1      & 1      & 1      & \cdots & 1      \\
    1      & 0      & d_{01} & d_{02} & \cdots & d_{0n} \\
    1      & d_{01} &      0 & d_{12} & \cdots & d_{1n} \\
    1      & d_{02} & d_{12} &      0 &        & d_{2n} \\[-0.4em]
    \vdots & \vdots & \vdots &        & \ddots & \vdots \\
    1      & d_{0n} & d_{1n} & d_{2n} & \cdots &      0 \\
  \end{bmatrix}
\end{equation}
%%Assuming now that the elements of $V$ are the vertices of a non-degenerated polytope\footnote{Mettere la condizione vera per cui le coordinate $(X_1,\ldots,X_{n+1})_V$ sono ben definite.},
The following proposition, see \cite{Havel} for more details, gives a relation satisfied by the distance coordinates of the point $v$.
\begin{proposition}\label{cayley_menger}
  Let $d_{ij}$, for $0<i<j=1,\ldots,n$, be the squared relative distances of $n+1$ affinely-independent points
  $V=\{v_0,\ldots,v_{n}\}$ and let $D_0,\ldots,D_{n} \geq 0$. Then there exists $v \in \span{V}$
  with distance coordinates $(D_0,\ldots,D_{n})_V$ if and only if the determinant of the augmented
  Cayley-Menger matrix $\mathrm{CM}_{V \cup \{v\}}$ is zero:
  
  \begin{equation}\label{detaugmentedCMv}
    \det
    \begin{bmatrix}
    0    & 1   & 1        & 1        & 1        & \cdots & 1        \\
    1    & 0  & D_0        & D_1      & D_2     &  \cdots & D_n      \\
    1    & D_0   & 0        & d_{01} & d_{02} & \cdots & d_{0n} \\
    1    &  D_1   & d_{01} &      0   & d_{12} & \cdots & d_{1n} \\
    1    &  D_2   & d_{02} & d_{12} &        0 &        & d_{2n} \\[-0.4em]
    \vdots     & \vdots   & \vdots   &   \vdots &      & \ddots &          \\
    1    &  D_n    & d_{0n} & d_{1n} & d_{2n} & \cdots &        0 \\
  \end{bmatrix}=0
\end{equation}
\end{proposition}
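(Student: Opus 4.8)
The plan is to prove both implications at once by reducing the existence of $v$ to the consistency of a single linear system and then reading off that consistency from the augmented determinant. First I would linearise the defining equations: writing $v=(y_1,\dots,y_n)$ and introducing the auxiliary unknown $t=\lVert v\rVert_2^2$, the conditions $\lVert v-v_i\rVert_2^2=D_i$ become
\begin{equation*}
t-2\langle v_i,v\rangle = D_i-\lVert v_i\rVert_2^2,\qquad i=0,\dots,n,
\end{equation*}
a square system of $n+1$ linear equations in the $n+1$ unknowns $(y_1,\dots,y_n,t)$. Its coefficient matrix has rows $(-2v_i^{\top},\,1)$, and its determinant is a nonzero multiple of the determinant of the matrix with columns $(v_i,\,1)^{\top}$, which is nonzero precisely because $V$ is affinely independent. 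Hence the system has a unique solution $(v^\ast,t^\ast)$, and one checks directly that $D_i-\lVert v^\ast-v_i\rVert_2^2 = t^\ast-\lVert v^\ast\rVert_2^2 =: \lambda$ takes the \emph{same} value for every $i$. Consequently a genuine realisation $v\in\R^n$ of the distance coordinates $(D_0,\dots,D_n)_V$ exists if and only if $\lambda=0$, in which case $v=v^\ast\in\span{V}$ is the unique such point by \Cref{lem:rigidity}.

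It then remains to locate the scalar $\lambda$ inside the augmented determinant \eqref{detaugmentedCMv}. After a simultaneous permutation of rows and columns (which leaves the determinant unchanged) moving the row and column carrying the $D_i$ to the last position, the matrix takes the block form $M=\begin{bmatrix} \mathrm{CM}_V & b\\ b^{\top} & 0\end{bmatrix}$ with $b=(1,D_0,\dots,D_n)^{\top}$. Since $V$ is affinely independent, $\conv{V}$ is a nondegenerate $n$-simplex and $\det\mathrm{CM}_V\neq 0$ (its modulus being a positive multiple of the squared volume, cf.~\cite{Som}); thus $\mathrm{CM}_V$ is invertible and the Schur complement gives $\det M = -\det\mathrm{CM}_V\cdot\bigl(b^{\top}\mathrm{CM}_V^{-1}b\bigr)$. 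The key identity I would establish is
\begin{equation*}
b^{\top}\,\mathrm{CM}_V^{-1}\,b \;=\; 2\,\lambda,\qquad\text{equivalently}\qquad \det M = -2\,\det\mathrm{CM}_V\cdot\lambda .
\end{equation*}
Granting this, $\det\mathrm{CM}_V\neq 0$ forces $\det M=0\iff\lambda=0$, which together with the first paragraph is exactly the claimed equivalence.

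To prove the identity I would argue geometrically and extend by continuity. For $\lambda>0$ the prescribed distances are realised by an honest point $v$ lying at height $h=\sqrt{\lambda}$ over $\mathrm{aff}(V)$ in $\R^{n+1}$: indeed $v^\ast$ is the foot of the perpendicular, and $\lVert v-v_i\rVert_2^2=\lVert v^\ast-v_i\rVert_2^2+h^2=D_i$. Then $V\cup\{v\}$ is a genuine $(n+1)$-simplex, both $\det M$ and $\det\mathrm{CM}_V$ equal the corresponding signed squared volumes, and the base-times-height recursion $V_{n+1}=\tfrac{1}{n+1}V_n\,h$ yields precisely $\det M=-2\det\mathrm{CM}_V\cdot\lambda$. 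As both sides are polynomials in $D_0,\dots,D_n$ and they agree on the open set $\{\lambda>0\}$, the identity holds identically. The hypothesis $D_i\geq 0$ then plays no role in the algebra, and is in any case automatically compatible: when $\lambda=0$ the realising point satisfies $D_i=\lVert v^\ast-v_i\rVert_2^2\geq 0$.

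The main obstacle is exactly the determinant identity $\det M=-2\det\mathrm{CM}_V\cdot\lambda$: expanding $b^{\top}\mathrm{CM}_V^{-1}b$ by hand is unpleasant, so the cleanest route is the volume computation above combined with the polynomial-continuation argument to cover the cases $\lambda\le 0$, where no real simplex exists and the direct geometric picture breaks down. A secondary point requiring care is the bookkeeping identifying $\span{V}$ with $\mathrm{aff}(V)=\R^n$ and the appeal to \Cref{lem:rigidity} for uniqueness, ensuring that the point $v$ produced when $\lambda=0$ genuinely lies in $\span{V}$ as required.
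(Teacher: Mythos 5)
Your proposal is correct, but there is nothing in the paper to compare it against: the paper does not prove \Cref{cayley_menger} at all, stating it as a known fact of distance geometry with a pointer to \cite{Havel}. Your argument is therefore a genuine, self-contained addition rather than a variant of the paper's route. Its structure is sound: the linearisation $t-2\langle v_i,v\rangle=D_i-\lVert v_i\rVert_2^2$ does produce a square system whose coefficient matrix is nonsingular exactly by affine independence, the defect $\lambda=t^*-\lVert v^*\rVert_2^2$ is indeed independent of $i$, and existence of a realisation in $\span{V}$ is equivalent to $\lambda=0$ (uniqueness also follows from the linear system itself, so the appeal to \Cref{lem:rigidity} is a convenience, not a necessity). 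The block form $M=\bigl[\begin{smallmatrix}\mathrm{CM}_V & b\\ b^{\top} & 0\end{smallmatrix}\bigr]$ after a simultaneous permutation is right, as is the Schur-complement reduction $\det M=-\det\mathrm{CM}_V\cdot b^{\top}\mathrm{CM}_V^{-1}b$, and the identity $\det M=-2\,\det\mathrm{CM}_V\cdot\lambda$ checks out (e.g.\ for $n=1$, $v_0=0$, $v_1=1$, $D_0=D_1=D$ one gets $\lambda=D-\tfrac14$ and $\det M=1-4D$). Two small points deserve explicit care in a written-up version. First, the volume argument needs the \emph{precise} constants in the Cayley--Menger volume formula, $\det\mathrm{CM}_V=(-1)^{n+1}2^n(n!)^2\,\mathrm{Vol}_n^2$, at two consecutive dimensions, whereas the paper (and \cite{Som}) only invoke ``a constant multiple''; the ratio $-2(n+1)^2$ of these constants is exactly what turns the base-times-height recursion into the factor $-2\lambda$, so it must be stated or derived (a single explicit example fixes it). Second, the polynomial-continuation step should note that $\{\lambda>0\}$ is nonempty and open — trivially so, since taking $D_i=\lVert p-v_i\rVert_2^2+c$ for $p\in\span{V}$ and $c>0$ gives $\lambda=c$ — and that $\lambda$ is itself a polynomial in $D_0,\ldots,D_n$ by Cramer's rule. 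With these details filled in, your proof is complete and has the merit of proving both implications simultaneously from one determinant identity, which the cited literature treats separately.
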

%\begin{proof}
% If $v \in \span{V}$, then the $(n+1)$-dimensional volume of $\conv{V %\cup v}$ is zero, thus
%  $\det(\text{CM}_{V \cup \{v\}})=0$. If $\text{CM}_{V \cup \{v\}}$ is singular, thus the distances
% $\{d_{ij}\}_{ij} \cup \{X_0,\ldots,X_n\}$ are realized by $n+2$ points $\{v_0,v_1,\ldots,v_n,v\} \subset
% \A^{n+1}$ which are included in some hyperplane $H \subset \A^{n+1}$, thus $v$ is in the affine span of
 % $v_0,v_1,\ldots,v_n$, which is $n$-dimensional.
%\end{proof}

%Let's see how Cayley-Menger determinants can be used to represent spheres in $\R^n$. We fix $n$
%vertices $V_0 = \{v_1,\ldots,v_n\} \subset \R^n$ of the above simplex, generating the maximal face
%$T_0=\conv{V_0}$ not containing $v_0$. Denote by $R_{T_0}$ the circumradius of $V_0$.

Let us consider now the case  $v\in \span{V}$ lies on a sphere of radius $r$, with center $w \in \span V$. 

\begin{proposition}\label{cayley_menger for sphere r}
  Let $d_{ij}$, for $0<i<j=1,\ldots,n$, be the squared relative distances of $n+1$ affinely-independent points
  $V=\{v_0,\ldots,v_{n}\}$ and let $D_0,\ldots,D_{n} \geq 0$ and $\rho=r^2$. Let $w\in \span V$ be the
  center of a sphere $S$ in $\span V$ of radius $r$ containing $V$. Then there exists $v \in \span{V}\cap S$
  with distance coordinates $(D_0,\ldots,D_{n})_V$ if and only if the determinant of the augmented
  Cayley-Menger matrix $\mathrm{CM}_{V \cup \{v\}\cup \{w\}}$ is zero:
   \begin{equation}\label{detaugmentedCMvrho}
  \det
  \begin{bmatrix}
    0 &  1 & 1   & 1        & 1        & 1        & \cdots & 1        \\
    1 &   0   & \rho     & \rho      & \rho  & \rho    &  \cdots  & \rho   \\
    1 &  \rho    & 0  & D_0        & D_1      & D_2     &  \cdots & D_n  \\
    1 &  \rho  & D_0   & 0        & d_{01} & d_{02} & \cdots & d_{0n} \\
    1 &  \rho  &  D_1   & d_{01} &      0   & d_{12} & \cdots & d_{1n} \\
    1 &   \rho &  D_2   & d_{02} & d_{12} &        0 &        & d_{2n} \\[-0.4em]
    \vdots   & \vdots &\vdots  & \vdots   & \vdots   &          & \ddots &          \\
    1  & \rho   &  D_n    & d_{0n} & d_{1n} & d_{2n} & \cdots &        0 \\
  \end{bmatrix}=0
   \end{equation}
\end{proposition}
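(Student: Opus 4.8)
The plan is to reduce the statement to \Cref{cayley_menger} by treating $w$ as one more known point and exploiting that, being the center of $S$, it has distance coordinates $(\rho,\ldots,\rho)_V$, since $\lVert w-v_i\rVert_2^2 = r^2 = \rho$ for every $i$. Under this reading the matrix in \eqref{detaugmentedCMvrho} is exactly the Cayley–Menger matrix $\mathrm{CM}_{V\cup\{v\}\cup\{w\}}$ of the $n+3$ points $v_0,\ldots,v_n,v,w$, in which the $w$-row records $\lVert w-v_i\rVert_2^2 = \lVert w-v\rVert_2^2 = \rho$ (the last equality being the sphere condition $v\in S$) and the $v$-row records $\lVert v-v_i\rVert_2^2 = D_i$. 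Because $V$ is affinely independent, $\span{V}$ is an $n$-dimensional affine space and the block $\mathrm{CM}_V$ is nonsingular; this nonsingular block is what I will pivot on.

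For the ``only if'' direction I would argue geometrically, exactly as for \Cref{cayley_menger}. If a point $v\in\span{V}\cap S$ with the prescribed distance coordinates exists, then all $n+3$ points $v_0,\ldots,v_n,v,w$ lie in the $n$-dimensional affine space $\span{V}$; hence they are affinely dependent and the $(n+2)$-simplex they span is degenerate, with vanishing $(n+2)$-dimensional volume. By the volume interpretation of the Cayley–Menger determinant recalled from \cite{Som}, the determinant \eqref{detaugmentedCMvrho} is then zero.

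For the converse, the key step is a Schur-complement factorization with respect to the nonsingular block $\mathrm{CM}_V$. Writing the bordered vectors $u_w = (1,\rho,\ldots,\rho)^{\top}$ and $u_v = (1,D_0,\ldots,D_n)^{\top}$, a direct block computation gives
\begin{equation*}
\det \mathrm{CM}_{V\cup\{v\}\cup\{w\}} = -\det(\mathrm{CM}_V)\,\bigl(\rho - u_w^{\top}\,\mathrm{CM}_V^{-1}\,u_v\bigr)^2,
\end{equation*}
where I use that $w\in\span{V}$ forces $u_w^{\top}\mathrm{CM}_V^{-1}u_w = 0$ (equivalently $\det\mathrm{CM}_{V\cup\{w\}}=0$ by \Cref{cayley_menger}). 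The crucial identity to establish is the polarization formula $u_w^{\top}\mathrm{CM}_V^{-1}u_v = \lVert v-w\rVert_2^2$, valid whenever $(D_0,\ldots,D_n)$ is realized by an actual point $v\in\span{V}$; granting it, vanishing of \eqref{detaugmentedCMvrho} is equivalent to $\lVert v-w\rVert_2^2 = \rho$, that is, to $v\in S$, and uniqueness of such $v$ in $\span{V}$ follows from \Cref{lem:rigidity}.

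The main obstacle is precisely the realizability issue hidden in the converse: the factorization shows that \eqref{detaugmentedCMvrho} controls only the membership of $v$ in $S$, so one must separately guarantee that $(D_0,\ldots,D_n)$ is the distance-coordinate vector of a genuine point of $\span{V}$ rather than of a point lying one dimension higher. This is the content of \Cref{cayley_menger} and should be invoked in tandem, since \eqref{detaugmentedCMvrho} can vanish on non-realizable data (for which $u_w^{\top}\mathrm{CM}_V^{-1}u_v$ is no longer a squared distance). Proving the polarization identity, most cleanly by applying the same Schur complement to the two-point augmented matrix $\mathrm{CM}_{V\cup\{v\}\cup\{w\}}$ and reading off its off-diagonal entry, is the technical heart of the argument.
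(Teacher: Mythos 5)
The paper never actually proves \Cref{cayley_menger for sphere r}: it is stated as a companion to \Cref{cayley_menger}, with \cite{Havel} as the background reference, and the closest thing to an argument in the text is the remark inside the proof of \Cref{prop:double_circles}, where the vanishing of the determinant is read as \Cref{cayley_menger} applied with the roles swapped --- the unknown point being the center $w_0$, adjoined to the known points $\{v_1,v_2,P\}$. So there is no proof in the paper to compare yours against; your argument fills a real hole, and its algebra checks out. With $A=\mathrm{CM}_V$ (nonsingular by affine independence of $V$), the Schur complement of $A$ in $\mathrm{CM}_{V\cup\{v\}\cup\{w\}}$ gives
\begin{equation*}
  \det \mathrm{CM}_{V\cup\{v\}\cup\{w\}}
  = \det A\,\Bigl[(u_w^{\top}A^{-1}u_w)(u_v^{\top}A^{-1}u_v)-\bigl(\rho-u_w^{\top}A^{-1}u_v\bigr)^2\Bigr],
\end{equation*}
and $u_w^{\top}A^{-1}u_w=0$ holds because $w\in\span{V}$ has distance coordinates $(\rho,\ldots,\rho)_V$, so your perfect-square formula is correct; the polarization identity $u_w^{\top}A^{-1}u_v=\lVert v-w\rVert_2^2$ for actual points $v,w\in\span{V}$ follows exactly as you sketch, by placing the true squared distance $\delta=\lVert v-w\rVert_2^2$ in the $(w,v)$ entry: that matrix is the Cayley--Menger matrix of $n+3$ genuine points of the $n$-dimensional space $\span{V}$, its determinant vanishes, and the same factorization forces $\delta=u_w^{\top}A^{-1}u_v$. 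The ``only if'' direction by degeneracy of the $(n+2)$-simplex is also fine.

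The ``obstacle'' you flag is, moreover, not a weakness of your proof but a genuine flaw in the literal statement: the ``if'' direction is false without the realizability hypothesis. Concretely, take $n=2$, $V$ an equilateral triangle of side $1$, so that $w$ is the centroid and $\rho=1/3$, and take $(D_0,D_1,D_2)=(2/3,2/3,2/3)$. Then \eqref{detaugmentedCMvrho} vanishes, because the matrix is the Cayley--Menger matrix of the five genuine points $V\cup\{w,v'\}$ in $\R^3$, where $v'$ is the point at height $1/\sqrt{3}$ above $w$ (and five points in $\R^3$ always have vanishing Cayley--Menger determinant); yet no point of the circumcircle $S\subset\span{V}$ has these coordinates, since a point of $\span{V}$ equidistant from the three vertices must be $w$ itself, whose coordinates are $(1/3,1/3,1/3)_V$. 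Your factorization explains the phenomenon: vanishing of \eqref{detaugmentedCMvrho} is the single linear condition $u_w^{\top}A^{-1}u_v=\rho$ on $(D_0,\ldots,D_n)$, a hyperplane, which strictly contains the $(n-1)$-dimensional family of coordinate vectors of points of $S$. As you say, the equivalence becomes correct precisely when one adds $\det\mathrm{CM}_{V\cup\{v\}}=0$ (\Cref{cayley_menger}), and this is how the proposition is in fact used throughout the paper: in \Cref{prop:double_circles} and in systems \eqref{polsyst_plane}, \eqref{syst_gen_case}, \eqref{eqnl} and \eqref{syst:gen:tetra}, that constraint always appears alongside the sphere constraints. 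So your proof establishes the corrected statement, which is the statement the paper actually relies on.
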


%\begin{corollary}\label{cayley_menger_tetra}
%  Let $V=\{v_0,v_1,v_2,v_3\}$ define the vertices of a tetrahedron $T$ of circumradius $r$ and side lengths
%  $d_{ij}$, $0\leq i<j\leq 3$.
%\end{corollary}

%Next we recover from \ref{cayley_menger} the famous formula for the circumradius of a triangle:

%\begin{lemma}\label{cayley_menger_circ}
%  Let $V=\{v_0,v_1,v_2\}$ define the vertices of a plane triangle of circumradius $r$ and side lengths $d_{01}$,
%  $d_{12}$ and $d_{02}$. Denote $\theta=2(d_{01}^2d_{02}^2+d_{01}^2d_{12}^2+d_{12}^2d_{02}^2)-(d_{01}^4+d_{02}^4+
%  d_{12}^4)$. Then $r^2 = \frac{d_{01}^2d_{02}^2d_{12}^2}{\theta}.$ 
%\end{lemma}
%\begin{proof}
 % Let $c$ be the circumcenter of the triangle, then $c = (r^2,r^2,r^2)_V$. By \ref{cayley_menger} one has
%  $\det(\text{CM}_{V \cup \{c\}})=\theta r^2 -d_{01}^2d_{02}^2d_{12}^2 =0$. Moreover $\theta$ is up to constant
 % equal to the square of the area of the triangle, thus $\theta > 0$.
%\end{proof}

\subsection{Double circles and spheres}\label{double circles}

Let $V=\{v_0,v_1,v_2\} \subset \Pi$ be a set of three affinely independent points of a real plane $\Pi$ and let
$r \geq \frac12 \lVert v_i-v_j\rVert_2$ for all $0 \leq i < j \leq 2$. Denote by $\mathcal{C}^r_{0},\mathcal{C}^r_{1},
\mathcal{C}^r_{2}$ the union of the two circles through $v_1,v_2$ (resp. $v_0,v_2$ and $v_0,v_1$),
contained in $\Pi$, of radius $r$ and call it the {double circle through $v_1,v_2$} (resp.
{$v_0,v_2$} and {$v_0,v_1$}) {of radius $r$}. For $r=\frac12 \lVert v_i-v_j\rVert_2$, the double circle
$\mathcal{C}^r_k$, $k \neq i,j$, reduces to one circle. Define the intersection of the three double circles
\begin{equation}\label{double_circles}
  \mathcal{C}^r_{V}:=
  \bigcap_{i=0}^2 \mathcal{C}^r_{i}.
%\mathcal{C}^r_{0} \cap \mathcal{C}^r_{1} \cap \mathcal{C}^r_{2}.
\end{equation}
Hereafter we show that Cayley-Menger constraints \eqref{detaugmentedCMv} and \eqref{detaugmentedCMvrho}
naturally define plane double circles and thus can be used to model the configuration in \eqref{double_circles}:
%%\footnote{probabilmente si generalizza direttamente a $\R^n$ e ci servirà la versione generalizzata.}:

\begin{theorem}\label{prop:double_circles}
  Let $V = \{v_0,v_1,v_2\} \subset \Pi$ be affinely-independent points spanning a plane $\Pi$ and let
  $\mathcal{C}^r_{0},\mathcal{C}^r_{1},\mathcal{C}^r_{2} \subset \Pi$ be the double circles of radius $r$
  as in \eqref{double_circles}.
  %%%Let $d_{ij}$ be the squared distance of $v_i$ to $v_j$ and let $(X,Y,Z)_V$ be the distance coordinate
  %%%with respect to $V$ of a point $P$.
  Denoted for $k=0,1,2$, $V_k = V \setminus \{v_k\}$ and by $w_k$ any of the circumcenters of $\mathcal{C}^r_k$,
  then $P \in \mathcal{C}^r_V$ if and only if
  \begin{equation}\label{polsyst_plane}
  \det(\mathrm{CM}_{V \cup \{P\}}) = \det(\mathrm{CM}_{V_0 \cup \{w_0,P\}}) =
  \det(\mathrm{CM}_{V_1 \cup \{w_1,P\}}) = \det(\mathrm{CM}_{V_2 \cup \{w_2,P\}}) = 0.
  \end{equation}
\end{theorem}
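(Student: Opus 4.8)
The plan is to prove the biconditional in two directions, translating the geometric membership condition $P \in \mathcal{C}^r_V$ into the four vanishing determinants one equation at a time. First I would observe that the statement is really a conjunction of four independent conditions, since $\mathcal{C}^r_V = \bigcap_{i=0}^2 \mathcal{C}^r_i$ together with the requirement (implicit in the coordinate model) that $P$ lie in the plane $\Pi$ spanned by $V$. So the strategy is to show that $\det(\mathrm{CM}_{V \cup \{P\}}) = 0$ encodes the coplanarity $P \in \Pi = \span{V}$, while each of the three remaining determinants encodes membership $P \in \mathcal{C}^r_k$ separately.

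For the coplanarity equation, I would simply invoke \Cref{cayley_menger}: with $V = \{v_0,v_1,v_2\}$ affinely independent in $\R^2$, the vanishing of $\det(\mathrm{CM}_{V \cup \{P\}})$ is exactly equivalent to the existence of a point $v \in \span{V}$ having the distance coordinates of $P$, and by the rigidity \Cref{lem:rigidity} that point is $P$ itself. Hence $\det(\mathrm{CM}_{V \cup \{P\}}) = 0$ if and only if $P \in \Pi$. For each double-circle equation, I would apply \Cref{cayley_menger for sphere r} to the three points $V_k \cup \{w_k\}$, where $w_k$ is a circumcenter of the radius-$r$ circle through the two vertices in $V_k$. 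That proposition says precisely that, given the center $w_k$ and radius $r$, the augmented determinant $\det(\mathrm{CM}_{V_k \cup \{w_k,P\}})$ vanishes if and only if $P$ lies on the circle of center $w_k$ and radius $r$ through $V_k$ — which, as $k$ ranges over the two possible circumcenters, is exactly one of the two branches of the double circle $\mathcal{C}^r_k$. Collecting the three conditions gives $P \in \bigcap_k \mathcal{C}^r_k = \mathcal{C}^r_V$.

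The step I expect to require the most care is the correspondence between the choice of circumcenter $w_k$ and the two branches of the double circle. The double circle $\mathcal{C}^r_k$ is the union of two distinct circles of radius $r$ through $v_i,v_j$ (for $\{i,j\} = \{0,1,2\}\setminus\{k\}$), whose centers $w_k$ and $w_k'$ are the two reflections of each other across the line $v_iv_j$; the phrase ``any of the circumcenters of $\mathcal{C}^r_k$'' must be read so that the determinant condition captures the full union rather than a single branch. The subtle point is that the determinant $\det(\mathrm{CM}_{V_k \cup \{w_k,P\}})$, for a fixed $w_k$, only detects the single circle centered at that $w_k$. I would therefore need to argue that this determinant is in fact \emph{independent of which circumcenter is chosen}, because by the rigidity proposition $w_k$ and $w_k'$ have identical distance coordinates to $V_k$ (both lie at distance $r$ from $v_i,v_j$, and both lie in $\Pi$), so they are indistinguishable from the Cayley-Menger viewpoint; hence a single polynomial equation automatically encodes membership in the entire double circle $\mathcal{C}^r_k$. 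Verifying this symmetry carefully — and confirming that the two branches are genuinely captured rather than spuriously excluded — is the crux of the argument; the remaining determinant manipulations are routine.
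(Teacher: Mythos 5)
Your proposal is correct and takes essentially the same route as the paper's proof: coplanarity is encoded via \Cref{cayley_menger}, membership in each double circle via \Cref{cayley_menger for sphere r}, and the two circumcenters of $\mathcal{C}^r_k$ are indistinguishable in the determinant because every entry of $\mathrm{CM}_{V_k\cup\{w_k,P\}}$ involving $w_k$ (including the $(w_k,P)$ entry) is the forced value $\rho$ --- which is exactly the paper's ``well-defined'' remark. Two cosmetic slips, neither of which breaks the argument: this center-independence comes from that forced-entry convention, not from \Cref{lem:rigidity}, and a single circle equation captures $P\in\mathcal{C}^r_k$ only in conjunction with the coplanarity equation (a point off $\Pi$ whose triangle with $V_k$ has circumradius $r$ also makes that determinant vanish), which your conjunction supplies anyway.
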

\begin{proof}
  We prove that $P \in \mathcal{C}^r_{0}$ if and only if
  $\det(\mathrm{CM}_{V \cup \{P\}}) = \det(\mathrm{CM}_{\{w_0,v_1,v_2,P\}}) = 0$: this implies
  the statement.
  First remark that $\text{CM}_{\{w_0,v_1,v_2,P\}}$ is well-defined as it only depends on
  the relative distances of $w_0,v_1,v_2,P$ and the two circumcenters $w_0$ have all distance $r$ from
  $v_1,v_2,P$ for $P \in \mathcal{C}^r_{0}$. Further, $P \in \mathcal{C}^r_0$ if and
  only if $P \in \Pi = \span{V}$ and
  $P$ belongs to a circle of radius $r$ containing $v_1$ and $v_2$. By \Cref{cayley_menger},
  the first condition is equivalent to $\det(\text{CM}_{V \cup \{P\}}) = 0$ and by
  \Cref{cayley_menger for sphere r}, the second condition is equivalent to
  $\det(\text{CM}_{V_0 \cup \{P\}}) = 0$: indeed, it holds if and only if $V_0 \cup \{w_0,P\}=
  \{w_0,v_1,v_2,P\}$ spans a plane and that $w_0$ is the circumcenter of $\conv{\{v_1,v_2,P\}}$.
\end{proof}

% The following question is   important  in the determination of the values $r$ such that $V$ is an $r$-body, see \cite[Thm 4.12]{MLVRbodies}:
%\begin{equation}\label{prob1}
%  \textit{Given three non-collinear points } V=\{v_1,v_2,v_3\} \subset\Pi, \textit{ for which } r \textit{ one has }
%  \mathcal{C}^r_{V} \neq \emptyset \textit{?}
%\end{equation}

Using double circles in place of circles, the following proposition is a reformulation of Johnson's
Theorem (\Cref{johnson_theorem}):

\begin{proposition}\label{prop_johnson}
  Let $V = \{v_0,v_1,v_2\}\subset\Pi$ be not collinear points in a real affine plane $\Pi$, $T = \conv{V}$ the
  triangle with vertices in a set
  $V$ and let $R_T$ be the circumradius of $T$. Then $\mathcal{C}^r_{V}= \emptyset$ if and only if $r \neq R_T$.
  For $r = R_T$, then $\mathcal{C}^r_{V}$ consists of the union of the circumcircle of $T$ and of its orthocenter
  $O^*$.
\end{proposition}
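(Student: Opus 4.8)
The plan is to use the reformulation of double circles established in \Cref{prop:double_circles}, which asserts that a point $P \in \mathcal{C}^r_V$ if and only if the four Cayley-Menger determinant conditions in \eqref{polsyst_plane} all vanish. Thus I would translate the geometric claim about $\mathcal{C}^r_V$ into a statement about the common solution set of this polynomial system, parametrized by $r$ (equivalently $\rho = r^2$) and by the pairwise squared distances $d_{01},d_{02},d_{12}$ of the vertices. The first determinant $\det(\mathrm{CM}_{V \cup \{P\}}) = 0$ forces $P$ to lie in the plane $\Pi = \span{V}$, which is automatic here; the remaining three enforce that $P$ lies simultaneously on a circle of radius $r$ through each of the three vertex-pairs. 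The target is to show this intersection is empty unless $r = R_T$, and at $r = R_T$ it is exactly the circumcircle together with the orthocenter $O^*$.

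For the forward geometric picture, I would argue as follows. Consider the three double circles $\mathcal{C}^r_0, \mathcal{C}^r_1, \mathcal{C}^r_2$, each a union of two circles of radius $r$ through a pair of vertices. One distinguished intersection point common to all three is the circumcenter-image: when $r = R_T$, the circumcircle itself passes through all of $v_0,v_1,v_2$, and every point of the circumcircle lies on one of the two circles in each double circle $\mathcal{C}^r_k$ (the one coinciding with the circumcircle). Hence the entire circumcircle is contained in $\mathcal{C}^{R_T}_V$. The second component, the orthocenter, should emerge from the reflection structure: the circle of radius $R_T$ through $v_i,v_j$ distinct from the circumcircle is the reflection of the circumcircle across the line $v_iv_j$, and the three reflected circles are classically known to meet at the orthocenter $O^*$ of $T$ (this is precisely the content of Johnson's original theorem, \Cref{johnson_theorem}, read in the direction where the three equal circles of radius $R_T$ pass through pairs of vertices and through a common point). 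I would verify that these two points/loci exhaust $\mathcal{C}^{R_T}_V$ by a dimension count: intersecting two fixed circles gives at most two points, and compatibility with the third double circle selects exactly the circumcircle locus and the orthocenter.

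The key nonexistence claim — that $\mathcal{C}^r_V = \emptyset$ for $r \neq R_T$ — I expect to be the main obstacle and the place where the algebraic formulation pays off. Here I would eliminate the coordinates of $P$ from the system \eqref{polsyst_plane} and extract a resultant or a condition purely in $\rho$ and the $d_{ij}$, showing it is satisfied only when $\rho = R_T^2$. Geometrically the intuition is clear: for $r < R_T$ no circle of radius $r$ can pass through a vertex pair whose circumscribing constraints are jointly incompatible, and for $r > R_T$ the three double circles fail to have a common intersection point because the two candidate meeting points (analogues of the circumcircle point and the orthocenter) split apart and cease to be shared by all three double circles simultaneously. Making this rigorous amounts to showing the eliminated polynomial in $\rho$ has $R_T^2$ as its only admissible root; I would handle this either by direct factorization of the resultant or, in the spirit of the paper's symbolic approach, by a primary decomposition of the ideal generated by the four determinants, isolating the component where $\rho = R_T^2$ and confirming no other real branch contributes a valid point $P \in \Pi$.
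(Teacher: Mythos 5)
Your proposal is correct in substance, but it is a hybrid: one half coincides with the paper's argument and the other half takes a genuinely different, more classical route. For the emptiness claim ($\mathcal{C}^r_V=\emptyset$ unless $r=R_T$), your plan of eliminating the coordinates of $P$ from system \eqref{polsyst_plane} is exactly what the paper does in \Cref{lem_generalABC_radius}: the elimination ideal in $\rho$ is principal, generated by $\rho\,(ABC-\theta\rho)^2$, so any (real or complex) solution has $\rho=0$ or $\rho=ABC/\theta=R_T^2$, and $\rho=0$ is geometrically impossible. For the characterization at $r=R_T$, however, you argue geometrically: the circumcircle $\Gamma$ is one member of each double circle, the other members are the reflections $\Gamma_k$ of $\Gamma$ across the sides, and these classically meet at the orthocenter — which is essentially invoking \Cref{johnson_theorem} — followed by an exhaustion argument. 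The paper instead stays entirely algebraic: it computes the primary decomposition of the elimination ideal (\Cref{prop_generalABC}, five prime components) and then identifies the isolated point as the orthocenter (\Cref{cor_generalABC_ortho}) and the real points of the codimension-two, degree-six component as the circumcircle (\Cref{cor_generalABC_circumcircle}) by Cartesian substitution. The trade-off is clear: your route is shorter and more elementary, but because it cites Johnson's theorem it cannot deliver the paper's stated byproduct, namely an independent computer-algebra proof of \Cref{johnson_theorem} itself, whereas the paper's decomposition approach is self-contained and produces explicit distance coordinates for $O^*$. Two small points of care if you flesh out your exhaustion step: a point of $\mathcal{C}^{R_T}_V$ off the circumcircle must lie on all three reflected circles, and pairwise these intersect in a vertex plus the orthocenter (e.g.\ $\Gamma_0\cap\Gamma_1=\{v_2,O^*\}$, the vertex being already on $\Gamma$), so the count does close up; and you should note the degenerate right-angled case, where $O^*$ lies on the circumcircle and a reflected circle coincides with $\Gamma$.
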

%\begin{proof}
%  TODO (dovrebbe derivare da Teorema di Johnson + \cite[Thm 4.2]{MLVRbodies})
%\end{proof}

The statement of \Cref{prop_johnson} might appear more advanced than that of \Cref{johnson_theorem}, however
its proof is straightforward from the one of \Cref{johnson_theorem} given in \cite{johnson}.
The advantage of this new formulation is twofold. First, it makes the following fact explicit: in order for
the three circles to intersect in one point, it is necessary that their radius coincide with the circumradius of
$T$. This is a special property of dimension two, indeed, this paper shows that it fails in dimension three.
Secondly, with the help of \Cref{prop:double_circles}, the constraint
$P \in \mathcal{C}^r_V$ translates into the system of polynomial equations \eqref{polsyst_plane} and thus
allows to use computer algebra in order to give an algebraic proof of \Cref{prop_johnson} and at the same time
of the classical Johnson's Theorem \cite{johnson2}, which is what we do in \Cref{sec_planetriangles}.

%La precedente prova e' fatta utilizzando come strumento principale il classico teorema di Johnson. Questo nuovo enunciato e' di fatto un enunciato equivalente al classico teorema di Johnson anche se piu' complesso apparentemente.  Nella \S \ref{planetriangles}  noi daremo una prova puramente algebrica di questo nuovo enunciato, e quindi anche del classico teorema di Johnson. 

\vspace{0.5cm}

In a three dimensional real space $E$, a similar formulation can be developed for the set of
vertices of a tetrahedron and a configuration of four spheres, such as those introduced at the beginning of \Cref{sec_intro}.
Let $V = \{v_0,v_1,v_2,v_3\} \subset E$ be affinely independent.
Let us denote by $\mathcal{S}^r_i$,
$i=0,1,2,3$, the union of the two spheres containing $V_i := V \setminus \{v_i\}$, for $r \geq r_i$, where
$r_i$ is the circumradius of $\conv{V_i}$ in dimension two (that is in $\span{V_i}$).
For $r \geq \max_i \{r_i\}$, let
\begin{equation}\label{double_spheres}
  \mathcal{S}^r_V :=
  \bigcap_{i=0}^3 \mathcal{S}^r_i.
  %\mathcal{S}^r_0 \cap \mathcal{S}^r_1 \cap \mathcal{S}^r_2 \cap \mathcal{S}^r_3.
\end{equation}

The following is a direct generalization of \Cref{prop:double_circles} to the case $n=3$:

\begin{proposition}\label{prop_johnson_tridimensional}
  Let $V = \{v_0,v_1,v_2,v_3\} \subset E$ be the set of vertices of a
  tetrahedron $T = \conv{V}$ in a three-dimensional
  real affine space $E$, with $\span{V}=E$. Let $w_i$ denote any of the two circumcenters of
  $\mathcal{S}^r_i$, for $i=0,\ldots,3$. Then $P \in \mathcal{S}^r_V$ if and only if
  \begin{equation}\label{polsyst_space}
    \begin{aligned}
      \det(\mathrm{CM}_{V \cup \{P\}}) & = 0 \\
      \det(\mathrm{CM}_{V_i \cup \{w_i,P\}}) & = 0, \text{ for } i = 0, \ldots, 3.
    \end{aligned}
  \end{equation}
\end{proposition}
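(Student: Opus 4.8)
The plan is to follow the strategy of the proof of \Cref{prop:double_circles} almost verbatim, treating the four spheres one at a time and then intersecting. Concretely, I would first reduce the statement to the componentwise equivalence
\[
P \in \mathcal{S}^r_i \iff \det(\mathrm{CM}_{V \cup \{P\}}) = 0 \ \text{ and } \ \det(\mathrm{CM}_{V_i \cup \{w_i,P\}}) = 0,
\]
for each fixed $i \in \{0,1,2,3\}$; intersecting these four equivalences over $i$ and recalling $\mathcal{S}^r_V = \bigcap_{i=0}^3 \mathcal{S}^r_i$ from \eqref{double_spheres} then yields \eqref{polsyst_space}. The first determinant condition is common to all four spheres and is handled once: by \Cref{cayley_menger}, $\det(\mathrm{CM}_{V \cup \{P\}}) = 0$ holds exactly when the distance coordinates $(D_0,D_1,D_2,D_3)$ of $P$ are realized by a genuine point of $\span{V} = E$, which pins $P$ down as a point of the ambient space (and, in the purely geometric reading where $P \in E$ is assumed, this equation is automatically satisfied).

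For the per-sphere part I would analyze $\det(\mathrm{CM}_{V_i \cup \{w_i,P\}})$ directly. This is the Cayley--Menger determinant of the five points $V_i \cup \{w_i,P\}$, in which every entry involving $w_i$ equals $\rho = r^2$: indeed $w_i$ is a circumcenter of $\mathcal{S}^r_i$, so $\lVert w_i - v_j\rVert_2^2 = \rho$ for $j \neq i$, and imposing $P \in \mathcal{S}^r_i$ forces $\lVert w_i - P\rVert_2^2 = \rho$ as well; this is exactly the matrix \eqref{detaugmentedCMvrho} with $n=2$ and reference set $V_i$. Since this determinant is a function of the mutual squared distances only, its vanishing is the standard degeneracy criterion stating that the prescribed data $d_{jk}$, $\rho$, $D_j$ are realizable by five points lying in an affine space of dimension at most three, i.e.\ in $E$. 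Combining this with the realizability of $P$ from the first equation and with the rigidity of distance coordinates (\Cref{lem:rigidity}), the vanishing of $\det(\mathrm{CM}_{V_i \cup \{w_i,P\}})$ becomes equivalent to $\lVert P - w_i\rVert_2 = r$, that is, to $P$ lying on one of the two spheres of radius $r$ through $V_i$.

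The step I expect to be delicate is the passage from \Cref{cayley_menger for sphere r} to the three-dimensional sphere $\mathcal{S}^r_i$. That proposition is stated for a sphere contained in $\span{V}$ with center $w \in \span{V}$, whereas here the relevant sphere is full-dimensional in $E$ and its center $w_i$ lies off the plane $\span{V_i}$ (it sits on the line through the circumcenter of $\conv{V_i}$ orthogonal to $\span{V_i}$). The point resolving this is that $\det(\mathrm{CM}_{V_i \cup \{w_i,P\}})$ does not see the ambient dimension: it is the same polynomial in the squared distances whether interpreted inside $\span{V_i}$ or inside $E$, so the realizability criterion may be read in $E$. Moreover the two admissible choices of $w_i$ are reflections of each other across $\span{V_i}$, and $\mathcal{S}^r_i$, being the \emph{union} of the two spheres, is symmetric under this reflection; this both explains why the determinant is independent of the chosen circumcenter (as asserted in the statement) and removes the two-fold ambiguity in recovering $P$ from $(D_j)_{j \neq i}$. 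Once the four componentwise equivalences are established, intersecting over $i$ completes the proof.
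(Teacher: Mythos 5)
Your proposal is correct and follows essentially the same route as the paper: the paper states this proposition without a separate proof, presenting it as the direct generalization of \Cref{prop:double_circles}, and your argument is precisely that proof transposed to $n=3$ --- componentwise reduction over the four double spheres, \Cref{cayley_menger} for the ambient realizability constraint, \Cref{cayley_menger for sphere r} read as a realizability-plus-rigidity criterion for the per-sphere constraint, then intersection over $i$. Your explicit handling of the delicate point (the circumcenter $w_i$ lies off $\span{V_i}$, so the Cayley--Menger condition must be read through the dimension-independence of the determinant and the reflection symmetry of the two admissible centers) fills in a detail the paper leaves implicit, and you resolve it correctly.
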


Let us remark that the distance coordinates of the circumcenters $w_i$ of $\mathcal{C}^r_V$ in
\Cref{prop:double_circles} (and of $\mathcal{S}^r_V$ in \Cref{prop_johnson_tridimensional}) coincide with
the unique value $r$, which is an unknown of the polynomial systems \eqref{polsyst_plane} and \eqref{polsyst_space}.
Therefore the number of variables in such systems equals the number of their equations, indeed, the distances
$\lVert w_i-v_i\rVert_2$, for $i=0,\ldots,3$, are all equal to the unknown $r$.

\section{Plane triangles}
\label{sec_planetriangles}

This section contains a proof of \Cref{prop_johnson} and as a byproduct, an alternative proof of Johnson's Theorem
\cite{johnson2} (\Cref{johnson_theorem}) based on Cayley-Menger determinants and symbolic computation.
The equations in \eqref{polsyst_plane} defining the intersection of three double circles
(cf. \eqref{double_circles}) are polynomials in the distance coordinates, with coefficients depending on
the parameters of the problem.

\subsection{A polynomial system}\label{sistema polinomial piano}

Let $V=\{v_0,v_1,v_2\} \subset \Pi$ be three non-collinear points and let $A$ (resp. $B$ and $C$)
be the square of the distance of $v_1$ to $v_2$ (resp. $v_0$ to $v_2$ and $v_0$ to $v_1$).
Denote by $V_i = V \setminus \{v_i\}, i=0,1,2$ and by $T = \conv{V}$.
Moreover let $X,Y,Z$ be the squared distances with respect to $v_0,v_1,v_2$ of an unknown fourth point
$P \in \Pi$, so that $P=(X,Y,Z)_V$ and let $\rho$ be the square of the unknown radius of each of the
three double circles in \eqref{double_circles}.

We recall from \Cref{prop:double_circles} that the condition $P \in \mathcal{C}^r_V$ is equivalent to
\eqref{double_circles}, that is to the parametric polynomial system:

%We are going to translate the property $O^* \in \mathcal{C}^r_V$, for $r=\sqrt{\rho}$, in algebraic terms. By \Cref{cayley_menger}, the determinant of the Cayley-Menger matrix $\text{CM}_{\{v_0,v_1,v_2,O^*\}}$ must vanish, indeed, the point $O^*$ is constrained to belong to the plane $\Pi$. We denote this polynomial by $f$ in \eqref{syst_gen_case}.

%Further, the sets $\{O^*,v_0,v_1\}$, $\{O^*,v_0,v_2\}$ and $\{O^*,v_1,v_2\}$ are constrained to belong to plane circles of radius $r=\sqrt{\rho}$ (precisely $\mathcal{C}^{r}_{v_0,v_1}$, $\mathcal{C}^{r}_{v_0,v_2}$ and $\mathcal{C}^{r}_{v_1,v_2}$). Applying \Cref{cayley_menger_circ} to these sets, one gets three more equations in $\rho,X,Y,Z$ with coefficients depending on parameters $A,B,C$, denoted by $f_X,f_Y$ and $f_Z$ in \eqref{syst_gen_case}.
\begin{equation}\label{syst_gen_case}
  \begin{aligned}[]
    [\det(\mathrm{CM}_{V \cup \{P\}})]   & \hspace{0.5cm} -2A^2X-2ABC+2ABX+2ABY+2ACX+2ACZ-2AX^2+2AXY+ \\
          & \hspace{1cm}  +2AXZ-2AYZ-2B^2Y+2BCY+2BCZ+2BXY-2BXZ- \\
          & \hspace{1cm}  -2BY^2+2BYZ-2C^2Z-2CXY+2CXZ+2CYZ-2CZ^2 = 0 \\
    [\det(\mathrm{CM}_{V_0 \cup \{w_0,P\}})] & \hspace{0.5cm} \rho(2(AY+AZ+YZ)-A^2-Y^2-Z^2)-AYZ = 0 \\ %f_X
    [\det(\mathrm{CM}_{V_1 \cup \{w_1,P\}})] & \hspace{0.5cm} \rho(2(BX+BZ+XZ)-X^2-B^2-Z^2)-BXZ = 0 \\ %f_Y
    [\det(\mathrm{CM}_{V_2 \cup \{w_2,P\}})] & \hspace{0.5cm} \rho(2(CX+CY+XY)-X^2-Y^2-C^2)-CXY = 0 \\ %f_Z   
  \end{aligned}
\end{equation}

System \eqref{syst_gen_case} consists of four equations depending on four variables $X,Y,Z,\rho$ and whose
coefficients are polynomial functions of three parameters $A,B,C$.

\subsection{Description of the algebraic solutions}\label{solutions planar case}

In this section we give a complete description of the solutions to system \eqref{syst_gen_case}, recovering the classical result by Johnson in the form of \Cref{prop_johnson}. The proofs are computer-algebra assisted and all computations are done with the use of Macaulay2 \cite{M2}.

First, as a direct application of \Cref{cayley_menger for sphere r}, we recall the classical
formula (in distance coordinates) for the circumradius of a triangle $T = \conv{V}$.
Denote by
$$
\theta := 2(AB+AC+BC)-(A^2+B^2+  C^2)=16\, area(T)^2.
$$

\begin{proposition}\label{cayley_menger_circ}
  $R_T^2 = \frac{ABC}{\theta}.$
\end{proposition}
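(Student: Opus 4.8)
The plan is to identify the circumcenter of $T$ with a point of $\span{V}$ whose three distance coordinates are all equal to $\rho = R_T^2$, and then to impose its existence through a single Cayley--Menger determinant. Indeed, the circumcenter $w$ is the unique point of the plane $\Pi = \span{V}$ equidistant from $v_0,v_1,v_2$, the common distance being exactly the circumradius $R_T$; hence $w = (\rho,\rho,\rho)_V$ with $\rho = R_T^2$. By \Cref{cayley_menger} (the specialization of \Cref{cayley_menger for sphere r} in which the distinguished point is the center itself), such a point exists in $\span{V}$ if and only if the augmented Cayley--Menger determinant $\det(\mathrm{CM}_{V \cup \{w\}})$ vanishes. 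The claim thus reduces to solving this determinantal equation in the single unknown $\rho$.

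First I would substitute into \eqref{detaugmentedCMv}, with $n=2$, the relative distances $d_{01}=C$, $d_{02}=B$, $d_{12}=A$ and the distance coordinates $D_0=D_1=D_2=\rho$ of $w$. Subtracting the $w$-row from each of the three vertex rows, and then the first column from the remaining columns, collapses the $5\times5$ determinant (up to sign) to the $3\times3$ symmetric determinant
\begin{equation*}
\det \begin{bmatrix} p & c & b \\ c & p & a \\ b & a & p \end{bmatrix} = p^3 - p\,(a^2+b^2+c^2) + 2abc,
\end{equation*}
where $p = -2\rho$, $a = A-2\rho$, $b = B-2\rho$, $c = C-2\rho$.

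The decisive step is the substitution of these linear-in-$\rho$ expressions into the right-hand side above. A short expansion shows that the coefficients of $\rho^3$ and of $\rho^2$ both cancel identically, so that the expression is in fact affine in $\rho$ and equals $2(ABC - \theta\,\rho)$, with $\theta = 2(AB+AC+BC)-(A^2+B^2+C^2)$ as defined before the statement. Setting $\det(\mathrm{CM}_{V \cup \{w\}})=0$ therefore yields the unique solution $\rho = ABC/\theta$, that is $R_T^2 = ABC/\theta$, as claimed. The only real obstacle is bookkeeping: verifying the cancellation of the top two degrees in $\rho$, which is precisely what guarantees both that the circumradius is uniquely determined and that the formula takes the announced form; once this cancellation is checked, the result is immediate.
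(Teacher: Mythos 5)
Your proposal is correct and follows essentially the route the paper intends: the paper proves this proposition by a one-line appeal to its Cayley--Menger propositions (imposing that the circumcenter $w=(\rho,\rho,\rho)_V$ exists in $\span{V}$ via the vanishing of $\det(\mathrm{CM}_{V\cup\{w\}})$), and you simply carry out that determinant computation explicitly. Your reduction to the $3\times 3$ determinant and the resulting identity $2(ABC-\theta\rho)$ (up to sign) are correct, so setting it to zero gives $\rho = ABC/\theta$ exactly as claimed, the division being licit since $\theta = 16\,area(T)^2 > 0$ for non-collinear $V$.
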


Then we show that there is a unique non-zero value of $\rho$, as function of the parameters defining
the triangle, for which \eqref{syst_gen_case} is solvable, as foreseen by Johnson's result (\Cref{johnson_theorem}).
%$\theta=2(AB+AC+BC)-(A^2+B^2+C^2)$
\begin{lemma}
  \label{lem_generalABC_radius}
  Let $X,Y,Z,\rho$ satisfy system \eqref{syst_gen_case}.
  Then either $\rho=0$ or $\theta=0$ or $\rho=\frac{ABC}{\theta}$.
\end{lemma}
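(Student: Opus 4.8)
The plan is to treat the three sphere-type equations in \eqref{syst_gen_case} as linear in the unknown $\rho$ and to eliminate $X,Y,Z$ by exploiting the structure of their coefficients. Each of the last three equations has the form $\rho\,Q_i - L_i = 0$, where $Q_i$ is a quadratic form in two of the three variables (with the corresponding side-length parameter) and $L_i$ is a product of the form $AYZ$, $BXZ$, $CXY$. Observe that each $Q_i$ is, up to sign, the $16\,\mathrm{area}^2$ expression of \Cref{cayley_menger_circ}: for instance $2(AY+AZ+YZ)-A^2-Y^2-Z^2$ is exactly $\theta$ evaluated at the triple $(A,Y,Z)$. This is the key observation, since it ties the coefficient of $\rho$ to the Cayley–Menger structure and ultimately to the circumradius formula $R_T^2 = ABC/\theta$.

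First I would set up the ideal generated by the four polynomials of \eqref{syst_gen_case} in $\Q(A,B,C)[X,Y,Z,\rho]$ and ask Macaulay2 for a primary decomposition (or, more directly, for the elimination ideal obtained by removing $X,Y,Z$). The claim $\rho=0$ or $\theta=0$ or $\rho=ABC/\theta$ is equivalent to saying that $\rho(\theta\rho - ABC)$ lies in the radical of this ideal; so concretely I would verify, by a Gr\"obner basis computation, that the univariate-in-$\rho$ polynomial $\rho(\theta\rho-ABC)$ belongs to $\sqrt{I}$, where $I$ is the ideal generated by the four equations. Equivalently, I would compute $I \cap \Q(A,B,C)[\rho]$ and check that its generator factors as a constant times $\rho(\theta\rho-ABC)$. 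Either route reduces the lemma to a finite symbolic computation whose output exhibits precisely the three stated alternatives.

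If one prefers a by-hand elimination to make the mechanism transparent, the plan is to multiply the three sphere equations by suitable monomials to clear the products $L_i$ against one another. From $\rho Q_0 = AYZ$, $\rho Q_1 = BXZ$, $\rho Q_2 = CXY$ one can form products and quotients that cancel the variables: for example $(\rho Q_0)(\rho Q_1)(\rho Q_2) = ABC\,(XYZ)^2$, while $(\rho Q_0)(\rho Q_1) = AB\,XYZ^2$, and so combining these yields a relation in which $XYZ$ and the squared variables can be traded for the $Q_i$ and the parameters. Substituting the first (Cayley–Menger) equation, which expresses an algebraic dependence forcing $P$ into the plane $\span{V}$, then collapses the resulting expression onto the factor $\theta\rho-ABC$ after extracting the trivial root $\rho=0$.

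The main obstacle I anticipate is not the existence of the factorization but the bookkeeping needed to show that \emph{no other} factor survives, i.e.\ that the elimination ideal in $\rho$ is exactly $\langle \rho(\theta\rho-ABC)\rangle$ up to radical rather than some proper multiple or a polynomial with spurious extra roots. The divisions by the quadratic forms $Q_i$ are only valid when $Q_i \neq 0$, so the cases $Q_i=0$ (which correspond geometrically to the double circle degenerating to a single circle, $r = \tfrac12\lVert v_i-v_j\rVert_2$) must be checked separately and shown to be absorbed into the alternatives $\theta=0$ or $\rho=ABC/\theta$. Handling these degenerate strata cleanly is where the primary-decomposition output of Macaulay2 is genuinely useful, since it separates the components automatically and certifies that the radical contains the desired product; I would therefore lean on the symbolic computation for the completeness part and use the hand manipulation only to motivate and sanity-check the answer.
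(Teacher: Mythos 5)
Your proposal is correct and takes essentially the same route as the paper: the paper's proof also forms the ideal $I \subset \Q(A,B,C)[X,Y,Z,\rho]$ generated by the four polynomials of \eqref{syst_gen_case} and computes the elimination ideal $I \cap \Q(A,B,C)[\rho]$ in Macaulay2, reading the three alternatives off its principal generator. The only minor discrepancy is that the actual generator is $\rho(ABC-\theta\rho)^2$ (with the second factor squared) rather than a constant multiple of $\rho(\theta\rho-ABC)$, but since you formulate the check up to radical this is immaterial.
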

\begin{proof}
  Let $I \subset \Q(A,B,C)[X,Y,Z,\rho]$ be the ideal defined by
  polynomials in \eqref{syst_gen_case}. The elimination ideal $I \cap \Q(A,B,C)[\rho]$ is principal
  and defined by the polynomial
  $$
  \rho(ABC+A^2\rho-2AB\rho+B^2\rho-2AC\rho-2BC\rho+C^2\rho)^2
  $$
  thus if $(X,Y,Z,\rho)$ is a solution, then either $\rho=0$ or $\theta\rho={ABC}$.
\end{proof}

Next, we describe the algebraic solutions of system \eqref{syst_gen_case} in the following theorem.

\begin{theorem}
  \label{prop_generalABC}
  Let $\theta=2(AB+AC+BC)-(A^2+B^2+C^2)$.
  The solutions to system \eqref{syst_gen_case} are organized in the following components:
  \begin{enumerate}
  \item $\rho = \frac{ABC}{\theta}$ and $(X,Y,Z)_V =
    \big(\frac{A(B+C-A)^2}{\theta},\frac{B(A+C-B)^2}{\theta},\frac{C(A+B-C)^2}{\theta}\big)$
    \label{genABC_ortho}
  \item $\rho = \frac{ABC}{\theta}$ and $(A,B,C,X,Y,Z)$ belongs to a complex irreducible subvariety of
    $\C^6$ of codimension two and degree six
    \label{genABC_circum}
  \item $\rho=0$ or $A=0$ or $B=0$ or $C=0$ or $\theta=0$
    \label{genABC_rho0}
    \label{genABC_ABC0}
  \end{enumerate}
\end{theorem}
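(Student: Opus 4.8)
The plan is to prove \Cref{prop_generalABC} by a primary decomposition of the ideal $I = \langle f_1,f_2,f_3,f_4\rangle \subset \Q(A,B,C)[X,Y,Z,\rho]$ generated by the four polynomials in \eqref{syst_gen_case}, treating $A,B,C$ as free parameters in the base field. By \Cref{lem_generalABC_radius} we already know that the elimination ideal $I \cap \Q(A,B,C)[\rho]$ forces $\rho(\theta\rho - ABC)^2 = 0$, so any solution either lies in the locus $\rho=0$ (or is degenerate with $\theta=0$ or $ABC=0$), accounting for component \ref{genABC_rho0}, or satisfies $\rho = ABC/\theta$. Thus the first step is to localize at $\rho = ABC/\theta$: I would substitute this value (equivalently, saturate $I$ with respect to $\rho$ and adjoin $\theta\rho - ABC$) and study the resulting ideal $J$ in the three remaining variables $X,Y,Z$ over $\Q(A,B,C)$.

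The core of the argument is then to run a primary decomposition of $J$ in Macaulay2 and identify its components. I expect to find exactly two: first, the single point solution of component \ref{genABC_ortho}, which should be verified directly by substituting the claimed coordinates $\big(\tfrac{A(B+C-A)^2}{\theta},\tfrac{B(A+C-B)^2}{\theta},\tfrac{C(A+B-C)^2}{\theta}\big)$ together with $\rho = ABC/\theta$ back into \eqref{syst_gen_case}; this point is the orthocenter (in distance coordinates) and is a zero-dimensional component. Second, a positive-dimensional component corresponding to the circumcircle of $T$ (in agreement with \Cref{prop_johnson}). The claim that this second component, viewed back in the full parameter space as a subvariety of $\C^6$ in the coordinates $(A,B,C,X,Y,Z)$, is irreducible of codimension two and degree six should be certified by computing the radical of the corresponding primary component and reading off its dimension and degree from the Hilbert polynomial, again via \cite{M2}.

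Because the computation is over the field $\Q(A,B,C)$ of rational functions in the parameters, some care is needed: the primary decomposition there describes the generic fibre, and I would clear denominators (the factor $\theta$) by a suitable saturation so that the components are expressed by polynomials in $\Q[A,B,C,X,Y,Z]$. The degenerate loci $\theta=0$, $A=0$, $B=0$, $C=0$ in component \ref{genABC_ABC0} arise precisely as the places where this saturation and the substitution $\rho = ABC/\theta$ fail to be well-defined, so they must be collected separately rather than lost. The geometric interpretation of the two nondegenerate components (orthocenter versus circumcircle) then matches Johnson's configuration and will be elaborated in the subsequent description of geometric solutions.

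The main obstacle I anticipate is the bookkeeping of the parametric decomposition rather than any single hard idea: making sure the saturation with respect to $\rho$ and with respect to $\theta$ yields exactly the two nondegenerate components without spurious embedded primes, and correctly transferring the codimension and degree data from the generic computation over $\Q(A,B,C)$ back to the honest projective closure in $\C^6$ so that the figures ``codimension two'' and ``degree six'' in component \ref{genABC_circum} are rigorously justified. Verifying that the zero-dimensional component is genuinely the orthocenter point and is not absorbed into the circumcircle component (they meet only on the degenerate locus) is the delicate structural point that the decomposition must confirm.
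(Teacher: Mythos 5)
Your proposal is correct and follows essentially the paper's strategy: invoke \Cref{lem_generalABC_radius}, adjoin $\theta\rho-ABC$, eliminate $\rho$, and let a Macaulay2 primary decomposition separate the orthocenter point of \Cref{genABC_ortho} from the circumcircle component of \Cref{genABC_circum}, with the degenerate loci collected in \Cref{genABC_rho0}. The one substantive difference is the ring in which the decomposition is run. The paper (its notation $\Q(A,B,C)[X,Y,Z,\rho]$ notwithstanding; see the components in \Cref{app:th2}, which contain $A$, $B$, $C$ as generators) decomposes the elimination ideal with the parameters as honest polynomial variables, i.e.\ in $\Q[A,B,C,X,Y,Z]$: this yields five prime components, of which $\mathscr{P}_1$ gives your point after eliminating two of $X,Y,Z$, $\mathscr{P}_0$ is precisely the codimension-two, degree-six subvariety of $\C^6$ (so its invariants are read off directly, with no transfer step), and $\mathscr{P}_2,\mathscr{P}_3,\mathscr{P}_4$ are the loci $C=0$, $B=0$, $A=0$ absorbed into \Cref{genABC_rho0}. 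Your computation over the field $\Q(A,B,C)$ sees only the two generic components, which is cleaner, but then, as you correctly anticipate, the ``codimension two and degree six'' claim must be carried back to $\C^6$ by saturation and Zariski closure; the paper's choice of ring avoids exactly this bookkeeping, at the price of three extra but trivially interpretable components. One small imprecision in your last point: the point component and the circumcircle component meet over the right-angled locus (e.g.\ $A=B+C$, where the orthocenter is a vertex and hence lies on the circumcircle), not only over the degenerate locus of \Cref{genABC_rho0}; this does not harm your argument, since being a separate component requires only non-containment, which the decomposition (radical, hence free of embedded primes) certifies.
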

\begin{proof}
  From \Cref{lem_generalABC_radius}, one has either $\rho=0$ or $\theta=0$ (\Cref{genABC_rho0})
  or $\rho = \frac{ABC}{\theta}$.
  Let $I \subset \Q(A,B,C)[X,Y,Z,\rho]$ be the ideal generated by polynomials in \eqref{syst_gen_case}.
  The elimination ideal $J = (I + \langle\rho \theta-ABC\rangle) \cap \Q(B,C)[X,Y,Z]$ is radical of
  codimension $2$ and degree $6$. Its primary decomposition has five prime components $\mathscr{P}_0,
  \mathscr{P}_1,\mathscr{P}_2,\mathscr{P}_3,\mathscr{P}_4$ of codimension $2,3,3,3,3$ and degree
  $6,1,1,1,3$, respectively. The polynomials generating these components are given in \Cref{app:th2}.

  The first component $\mathscr{P}_0$ is the one of \Cref{genABC_circum}.
  Eliminating two of the three variables $X,Y,Z$ from the component
  $\mathscr{P}_1$,
  one gets the univariate linear polynomials
    $\theta X - A(B+C-A)^2$, $\theta Y - B(A+C-B)^2$ and $\theta Z - C(A+B-C)^2$,
  whose zero set is the singleton in \Cref{genABC_ortho} (unless $\theta=0$, \Cref{genABC_ABC0}).
  The three remaining components are linear spaces and satisfy either $A=0$ or $B=0$ or $C=0$
  (\Cref{genABC_ABC0}).
\end{proof}

\subsection{Geometric solutions}\label{planar_geometric}

In this section we assume that $A,B,C$ are the squared edge lengths of a plane triangle $T$
(that is, their square roots satisfy the triangle inequality). We describe the acceptable geometric
configurations among the solutions given in \Cref{prop_generalABC},
allowing to make the expressions in Items \ref{genABC_ortho} and \ref{genABC_circum} more explicit.
We call $O^*$ the point in $\R^2$ with distance coordinates $(X,Y,Z)_V$ from the vertices of $T$.

%\begin{remark}
%  If $A,B,C$ are the squared edge lengths of a plane triangle $T$ of circumradius $R_T$, then by
%  \Cref{cayley_menger_circ}, the value of $\rho = \frac{ABC}{\theta}$ given in \Cref{lem_generalABC_radius}
%  is equal to $R_T^2$.
%\end{remark}

The proof of \Cref{cor_generalABC_ortho} is by standard analytic geometry.

\begin{corollary}\label{cor_generalABC_ortho}
  Assume that $A,B,C$ are the squared edge lengths of a plane triangle $T=\conv{V}$. Then the solution
  $O^*$ of \Cref{genABC_ortho} in \Cref{prop_generalABC} is the orthocenter of $T$.
\end{corollary}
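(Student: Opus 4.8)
The plan is to verify directly, using analytic (Cartesian) coordinates, that the point $O^*$ with distance coordinates
$$
(X,Y,Z)_V = \Big(\tfrac{A(B+C-A)^2}{\theta},\ \tfrac{B(A+C-B)^2}{\theta},\ \tfrac{C(A+B-C)^2}{\theta}\Big)
$$
coincides with the orthocenter of the triangle $T=\conv{V}$. By \Cref{lem:rigidity}, a point in the affine plane spanned by $V$ is uniquely determined by its distance coordinates with respect to the affinely-independent set $V$; hence it suffices to exhibit \emph{some} point whose squared distances to $v_0,v_1,v_2$ are exactly $X,Y,Z$ above and which is recognizably the orthocenter. So the whole argument reduces to a computation of the squared distances from the orthocenter to each vertex, followed by a comparison with the formulas in \Cref{genABC_ortho}.

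Concretely, first I would place $T$ in a convenient Cartesian frame (for instance $v_0$ at the origin, $v_1$ on the positive $x$-axis), and record the squared edge lengths $A=\lVert v_1-v_2\rVert_2^2$, $B=\lVert v_0-v_2\rVert_2^2$, $C=\lVert v_0-v_1\rVert_2^2$, together with $\theta = 16\,area(T)^2$ as in \Cref{cayley_menger_circ}. Next I would write down the classical expression for the orthocenter $H$ in barycentric coordinates, namely $H = \tan\alpha\, v_0 + \tan\beta\, v_1 + \tan\gamma\, v_2$ (normalized), where $\alpha,\beta,\gamma$ are the interior angles at $v_0,v_1,v_2$. Converting the tangents into squared-length data via $\cos\alpha = \tfrac{B+C-A}{2\sqrt{BC}}$ and $area(T)$ expresses each barycentric weight as a rational function of $A,B,C$; in particular the weight of $v_i$ is proportional to $(\text{sum of the two other squared sides}) - (\text{opposite squared side})$ over the appropriate factor.

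Then I would compute $\lVert H - v_0\rVert_2^2$ directly in these coordinates and simplify. The key identity to establish is that the squared distance from the orthocenter to $v_0$ equals $A(B+C-A)^2/\theta$, and symmetrically for the other two vertices; a clean way to see the vertex $v_0$ case is to use the well-known relation $\lVert H-v_0\rVert_2 = 2R_T\cos\alpha = \sqrt{A}\,\lvert\cos\alpha\rvert$ together with $R_T^2=ABC/\theta$ from \Cref{cayley_menger_circ} and the cosine formula above, which yields $\lVert H-v_0\rVert_2^2 = A\cos^2\alpha \cdot (4R_T^2/A)\cdot(A/4R_T^2)$ — i.e.\ after substituting $\cos\alpha=\tfrac{B+C-A}{2\sqrt{BC}}$ one obtains exactly $A(B+C-A)^2/\theta$. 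Having matched all three coordinates, \Cref{lem:rigidity} forces $O^*=H$, completing the proof.

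The main obstacle is purely bookkeeping rather than conceptual: one must make sure the angle-based formulas for $H$ and for $\lVert H-v_i\rVert_2$ are applied with correct signs, since for an obtuse triangle the orthocenter lies outside $T$ and one of the cosines is negative. Because the target expressions involve the squares $(B+C-A)^2$ etc., the sign ambiguity is harmless for the distance coordinates, but care is needed to confirm that the \emph{same} point realizes all three squared distances simultaneously; this is exactly what the identity $\lVert H-v_i\rVert_2 = 2R_T\lvert\cos(\text{angle at }v_i)\rvert$, valid in both the acute and obtuse cases, guarantees. Once this is in hand the verification is a routine substitution, and the uniqueness from \Cref{lem:rigidity} closes the argument.
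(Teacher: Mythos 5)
Your overall strategy is sound and is essentially what the paper intends: the paper omits the details, saying only that the proof of \Cref{cor_generalABC_ortho} is ``by standard analytic geometry'', and your argument --- compute the squared distances from the orthocenter $H$ to the vertices, match them with the formulas of \Cref{genABC_ortho} in \Cref{prop_generalABC}, and invoke the rigidity of distance coordinates (\Cref{lem:rigidity}) to conclude $O^*=H$ --- is a correct concrete instantiation of that. The key identity $\lVert H-v_0\rVert_2=2R_T\lvert\cos\alpha\rvert$ (valid in the acute, right and obtuse cases), together with $R_T^2=ABC/\theta$ from \Cref{cayley_menger_circ} and $\cos\alpha=\tfrac{B+C-A}{2\sqrt{BC}}$, gives
\begin{equation*}
\lVert H-v_0\rVert_2^2 \;=\; 4R_T^2\cos^2\alpha \;=\; 4\,\frac{ABC}{\theta}\cdot\frac{(B+C-A)^2}{4BC} \;=\; \frac{A(B+C-A)^2}{\theta},
\end{equation*}
and symmetrically for $v_1,v_2$; rigidity then forces $O^*=H$. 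The barycentric detour via tangent weights is unnecessary (and degenerates for right triangles), but harmless since you never actually rely on it.

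Two of your intermediate formulas, however, are wrong as written and should be corrected. First, $2R_T\cos\alpha=\sqrt{A}\,\lvert\cos\alpha\rvert$ is false: the law of sines gives $\sqrt{A}=2R_T\sin\alpha$, so $2R_T\lvert\cos\alpha\rvert=\sqrt{A}\,\lvert\cot\alpha\rvert$, and your equation holds only when $\sin\alpha=1$. Second, the displayed product $A\cos^2\alpha\cdot(4R_T^2/A)\cdot(A/4R_T^2)$ collapses to $A\cos^2\alpha$, which is not $\lVert H-v_0\rVert_2^2$; substituting $\cos\alpha$ into it would give $A(B+C-A)^2/(4BC)$, not $A(B+C-A)^2/\theta$. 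Your final claim is nevertheless correct, because it is what the uncorrupted chain $4R_T^2\cos^2\alpha$ produces, as displayed above; so the slip is cosmetic rather than structural, but as written the middle of your computation does not entail your conclusion and needs the one-line repair.
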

%\begin{proof}
%  With these hypotheses, the triangle has positive area $S$ and from the circumradius formula and applying
%  \Cref{cayley_menger_circ}, one gets $4S=\sqrt{\theta}$, thus $\theta>0$. Concerning Item 1, assume
%  $V=\{v_1,v_2,v_3\}$ is given by \Cref{coord_vert}. In this setting, the orthocenter $(x^*,y^*)$ of $T$ has
%  the same abscissa as $v_1$, that is $x^* = (A-B+C)/(2\sqrt{A})$. The ordinate $y^*$ can be
%  computed by intersection of the line $x = x^*$ with the height of the side $v_1v_2$, which
%  is $y = x(B-A-C)/\theta+(A-B+C)/\sqrt{A}$. One gets $y^* = {(A-B+C)(A+B-C)}/(2\sqrt{A}\sqrt{\theta})$,
%  thus the distance-coordinates $(X,Y,Z)_V$ are those given in \Cref{genABC_ortho}.
%\end{proof}

\begin{corollary}\label{cor_generalABC_circumcircle}
  Assume that $A,B,C$ are the squared edge lengths of a plane triangle $T=\conv{V}$. Then the real
  solutions in \Cref{genABC_circum} of \Cref{prop_generalABC} form the circumcircle of $T$.
\end{corollary}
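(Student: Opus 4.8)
The plan is to establish the two inclusions between the circumcircle $\Omega$ of $T$ and the set of realizable real points of the component $\mathscr{P}_0$ of \Cref{genABC_circum} in \Cref{prop_generalABC}. I will work throughout in distance coordinates, identifying a point $P\in\Pi$ with its triple $(X,Y,Z)_V$; by \Cref{lem:rigidity} this triple determines $P$, and by \Cref{cayley_menger} the realizable triples are exactly those with $X,Y,Z\geq 0$ and $\det(\mathrm{CM}_{V\cup\{P\}})=0$. Since on $\mathscr{P}_0$ one has $\rho=\frac{ABC}{\theta}$, \Cref{cayley_menger_circ} gives $\rho=R_T^2$, so the common radius of the three double circles equals the circumradius $R_T$; by \Cref{lem_generalABC_radius} this is the only value of $\rho$ compatible with a positive-dimensional family of solutions.

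For the inclusion $\Omega\subseteq\mathscr{Z}_\R(\mathscr{P}_0)$ I would argue as follows. The circle $\Omega$ passes through $v_0,v_1,v_2$ and has radius $R_T$, hence for each $k$ it is one of the two circles of radius $R_T$ through the pair $V_k$, so $\Omega\subseteq\mathcal{C}^{R_T}_k$ and therefore $\Omega\subseteq\mathcal{C}^{R_T}_V$. By \Cref{prop:double_circles} every $P\in\Omega$ solves \eqref{syst_gen_case} with $\rho=R_T^2$. As $\Omega$ is infinite while, at fixed $(A,B,C)$, the component $\mathscr{P}_1$ of \Cref{genABC_ortho} cuts out a single point and the remaining components force $A=0$, $B=0$ or $C=0$, the distance coordinates of the points of $\Omega$ must lie on $\mathscr{P}_0$.

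For the reverse inclusion I would exploit a dimension count together with the geometry of the double circles. Writing each $\mathcal{C}^{R_T}_k=\Omega\cup\Omega_k'$, where $\Omega_k'$ is the reflection of $\Omega$ across the edge opposite $v_k$, distributivity of intersection over union in \eqref{double_circles} gives $\mathcal{C}^{R_T}_V=\Omega\cup(\Omega_0'\cap\Omega_1'\cap\Omega_2')$; since the three reflected circles are distinct for a generic triangle, their common intersection is finite, so the only one-dimensional part of $\mathcal{C}^{R_T}_V$ is $\Omega$ itself. On the other hand $\mathscr{P}_0$ is a prime component of dimension four whose projection to the parameter space $(A,B,C)$ is dominant, so the fibre over a generic triangle is a pure one-dimensional curve; its realizable real points form a one-dimensional set that contains $\Omega$ and, by \Cref{prop:double_circles}, is contained in $\mathcal{C}^{R_T}_V$. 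Comparing with the previous paragraph forces this set to equal $\Omega$.

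The main obstacle is the realizability bookkeeping in the last step: a real triple lying on $\mathscr{Z}_\R(\mathscr{P}_0)$ need not be the distance coordinate of an actual point of $\Pi$, and one must ensure that the realizable fibre is purely one-dimensional, with no isolated realizable points sitting in the finite set $\Omega_0'\cap\Omega_1'\cap\Omega_2'$; this is guaranteed because the generic fibre of the irreducible $\mathscr{P}_0$ is equidimensional. An alternative, fully computational route that avoids this discussion is to fix a Cartesian realization of $T$, substitute $X=\lVert P-v_0\rVert_2^2$, $Y=\lVert P-v_1\rVert_2^2$, $Z=\lVert P-v_2\rVert_2^2$ into the generators of $\mathscr{P}_0$ from \Cref{app:th2}, and check that the resulting real locus in $\R^2$ is exactly the circle of radius $R_T$ about the circumcenter.
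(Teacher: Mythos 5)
Your main geometric argument has a genuine gap in the reverse inclusion. The set identity $\mathcal{C}^{R_T}_V=\Omega\cup(\Omega_0'\cap\Omega_1'\cap\Omega_2')$ is correct, and it reduces the problem to showing that no point of the finite set $\Omega_0'\cap\Omega_1'\cap\Omega_2'$ --- which, in the classical Johnson configuration, is precisely the orthocenter of $T$ --- is a realizable real point of the specialized component $\mathscr{P}_0$. Your justification for this exclusion, namely that ``the generic fibre of the irreducible $\mathscr{P}_0$ is equidimensional'', does not work: equidimensionality of a complex curve says nothing about its real points. An equidimensional complex curve can contain isolated real points (the plane curve $\mathscr{Z}(x^2+y^2)$ is equidimensional of dimension one, yet its real locus is a single point), and nothing in your argument prevents the specialized curve $\mathscr{Z}(\mathscr{P}_0)$ from passing through the distance-coordinate triple of the orthocenter. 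If it did, the orthocenter would be a realizable real solution of \Cref{genABC_circum} and the corollary as stated would be false; deciding this membership is therefore the entire content of the reverse inclusion, and it requires an actual verification, geometric or computational. (For instance, in the equilateral case the paper checks that the centre $(\frac13,\frac13,\frac13)$ violates the generator $X+Y+Z-2$ of the circumcircle component; the general case needs an analogous check against the generators of $\mathscr{P}_0$ in \Cref{app:th2}, e.g.\ substituting the coordinates of \Cref{genABC_ortho} from \Cref{prop_generalABC}.) A smaller point: in the forward inclusion, a priori only the points of $\Omega$ avoiding the finitely many points of the other specialized components are forced onto $\mathscr{Z}(\mathscr{P}_0)$; you should add that $\mathscr{Z}(\mathscr{P}_0)$ is closed and a cofinite subset of $\Omega$ is dense in it.

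The ``alternative, fully computational route'' you mention in closing is exactly the paper's proof: the paper sets $A=1$, places $v_1=(0,0)$, $v_2=(1,0)$, $v_0=((1-B+C)/2,\sqrt{\theta}/2)$, substitutes the squared distances of a point $(x,y)$ into the generators of $\mathscr{P}_0$ (introducing $L=\sqrt{\theta}$ together with the relation $L^2=\theta$), and finds that the real zero set is cut out by the single equation $\sqrt{\theta}x^2+\sqrt{\theta}y^2+y(1-B-C)-\sqrt{\theta}x=0$, i.e.\ the circumcircle. So to complete your write-up you should either carry out that computation, or repair the isolated-point exclusion above; as it stands the geometric route is incomplete precisely at the step that distinguishes the circumcircle component from the orthocenter component.
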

\begin{proof}
  Let $(x,y)$ be Cartesian coordinates in $\R^2$. Assume by simplicity that $A=1$ (the case with general $A>0$
  is equivalently obtained by scaling).
  Without loss of generality, we put (with $\theta=2(B+C+BC)-(1+B^2+C^2)$)
  \begin{equation*}
    v_0 =
    \begin{bmatrix}                                                                                                        %\frac{A-B+C}{2\sqrt{A}} \\
     %\frac{\sqrt{\theta}}{2\sqrt{A}}
      {(1-B+C)}/{2} \\
      {\sqrt{\theta}}/{2}
    \end{bmatrix}, 
    \hspace{1cm}
    v_1 =
    \begin{bmatrix}
      0 \\ 0
    \end{bmatrix},
    \hspace{1cm}
    \text{and }
    \hspace{0.5cm}
    v_2 =
    \begin{bmatrix}
      1 \\ 0
    \end{bmatrix}.
  \end{equation*}
  Let $\mathscr{P}_0$ be the component related to solutions of \Cref{genABC_circum} in \Cref{prop_generalABC}
  (cf. \Cref{app:th2}).
  Substituting to $X,Y,Z$, in the defining polynomials of $\mathscr{P}_0$, the squared distances
  of $(x,y)$ to $v_0,v_1,v_2$, respectively, with $L=\sqrt{\theta}$ and adding the constraint $L^2=\theta$,
  one gets a polynomial system $F \subset \Q(B,C,L)[x,y]$ whose real zero set $\mathscr{Z}_{\R}(F)$ is irreducible and
  defined by the single equation
  $$
  \sqrt{\theta}x^2+\sqrt{\theta}y^2+y(1-B-C)-\sqrt{\theta}x = 0,
  $$
  that is, $\mathscr{Z}_{\R}(F)$ is the circle circumscribing $T$.
\end{proof}

%\subsection{Data from Macaulay2}
%\label{genXYZ}
%
%The generators of $\mathscr{P}_0$ in the proof of \ref{prop_generalABC} are
%\begin{align*}
% q_1 &= ACX-AX^2+BCY+AXY+BXY-2CXY-BY^2-C^2Z+CXZ+CYZ \\
% q_2 &= ABX-AX^2-B^2Y+BXY+BCZ+AXZ-2BXZ+CXZ+BYZ-CZ^2 \\
% q_3 &= A^2X-ABY-AXY+BY^2-ACZ-AXZ+2AYZ-BYZ-CYZ+CZ^2 \\
% q_4 &= ABC-AX^2+AXY+BXY-CXY-BY^2+AXZ-BXZ+CXZ-AYZ+BYZ+CYZ-CZ^2 \\
%\end{align*}

\subsection{The equilateral case}

We explain the algebraic solutions given in Corollaries \ref{cor_generalABC_ortho} and
\ref{cor_generalABC_circumcircle} in the special case of the equilateral
plane triangle (\Cref{fig:equilatero}). Let $I \subset \Q[X,Y,Z,\rho]$ be the ideal defined by system
\eqref{syst_gen_case} with $A=B=C=1$.

As in the proof of \Cref{prop_generalABC}, eliminating the distance variables yields the univariate polynomial
$\rho(3\rho-1)^2$. The case $\rho=0$ corresponds to the geometrically-meaningless situation when the
 circles have radius zero. The second case corresponds to the circumradius of the equilateral
triangle with side length $1$, which is $\sqrt{1/3}$. This already shows the property mentioned in
\Cref{prop_johnson}: in order for three circles of radius $r$, each containing two vertices of an equilateral
triangle $T$, to intersect, it is necessary that $r = R_T$ (cf. \Cref{fig:equilatero}).

\begin{figure}[!ht]
  \begin{center}
    \includegraphics{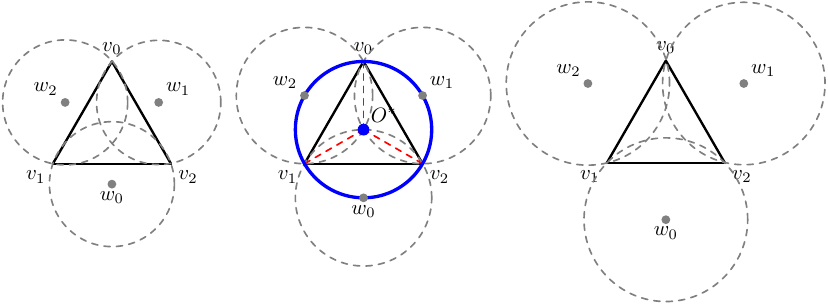}
  \end{center}
  \caption{Johnson's Theorem for a plane equilateral triangle $T$.}
    \label{fig:equilatero}
\end{figure}

One verifies algebraically that this is also sufficient. Indeed, eliminating $\rho$ from $I+\langle 3\rho-1\rangle$,
one gets the ideal $J=I+\langle 3\rho-1\rangle\cap \Q[X,Y,Z]$, which is radical with two prime components:
$\mathscr{P}_1 = \langle X+Y+Z-2, X^2+XY+Y^2-2X-2Y+1\rangle$ and $\mathscr{P}_2 = \langle 3X-1,3Y-1,3Z-1\rangle$.
The component $\mathscr{P}_2$ gives the distance coordinates $(X,Y,Z)_V=(\frac13,\frac13,\frac13)_V$ of the center
of $T$ (the blue dot in \Cref{fig:equilatero}).

Concerning $\mathscr{P}_1$, first we remark that real solutions satisfying $X+Y+Z=2$ correspond to the circumcircle
of $T$. Indeed, let $(x,y)$ be Cartesian coordinates in $\Pi=\span{V}$. Since $\rho=\frac13$ is
the square of the circumradius, we can assume that the three vertices of the triangle are
$v_0=(0,{1}/{\sqrt{3}}), v_1=(-\frac12, -\frac{1}{2\sqrt{3}})$ and $v_2=(\frac12, -\frac{1}{2\sqrt{3}})$.
Thus the condition $X+Y+Z=2$ translates into the condition $3x^2+3y^2+1=2$, that is $x^2+y^2 = \frac13 = \rho$,
the circumcircle. Finally, the second generator yields
$$
X^2+XY+Y^2-2X-2Y+1 = (1/3-x^2-y^2)(\sqrt{3}y-{3}x^2-{3}y^2-{3}x),$$
that is, its zero set is the double circle of radius $\sqrt{1/3}$
through $v_0,v_1$, namely $\mathcal{C}^{\sqrt{1/3}}_2$.
In other words $\mathscr{Z}_\R(F)$, the real algebraic variety defined by system
$$
F =
\begin{cases}
  X+Y+Z = 2 \\
  X^2+XY+Y^2-2X-2Y+1 = 0
\end{cases}
$$
is the circle circumscribing $T$ (\Cref{fig:equilatero}, the blue circle).

\section{Tetrahedra}
\label{sec_tetrahedra_general}

Let $V = \{v_0,v_1,v_2,v_3\} \subset E$ be four affinely-independent points in a three dimensional real
affine space $E$, forming the set of vertices of a tetrahedron $T = \conv{V}$. Let $V_i = V
\setminus \{v_i\}$, $i=0,1,2,3$.

\begin{definition}\label{def_supp_sphere}
  A sphere of radius $r$ containing a set $V_i$ of three vertices of a tetrahedron $T = \conv{V}$
  is called a $r$-supporting sphere of $V$.
\end{definition}

In particular, a collection of spheres $S_i$, $0 \leq i \leq 3$, of radius $r$, satisfying
property ($i$) is a configuration of $r$-supporting spheres of $V$. We are interested in
determining configurations of four distinct $r$-supporting spheres of $V$, intersecting in one
point (property ($iii$)), and among these, in those satisfying the additional property ($ii$).

Let $d_{ij} = \lVert v_i-v_j \rVert_2^2$, for $0 \leq i < j \leq 3$ and let
$P = (X,Y,Z,W)_V$ be a general point.
Assume $P$ belongs to spheres of equal radius $r$, supporting the vertices of a tetrahedron $T$,
but not necessarily satisfying ($ii$), then necessarily $P$ belongs to the set $\mathcal{S}^r_V$ defined in
\eqref{double_spheres} and by \Cref{prop_johnson_tridimensional}, the parameters above satisfy
the determinantal conditions
(denoting $\rho=r^2$ and $|\cdot| := \det(\cdot)$):
\begin{equation}\label{syst:gen:tetra}
  \begin{aligned}
    \scalebox{1.15}{$
      \left|
  \begin{smallmatrix} % V \cup P
    0 & 1    & 1      & 1      & 1      & 1      \\
    1 & 0    & X      & Y      & Z      & W      \\
    1 & X    & 0      & d_{01} & d_{02} & d_{03} \\
    1 & Y    & d_{01} & 0      & d_{12} & d_{13} \\
    1 & Z    & d_{02} & d_{12} & 0      & d_{23} \\
    1 & W    & d_{03} & d_{13} & d_{23} & 0      \\
  \end{smallmatrix}
  \right|$}
  & = 0 \\
    \scalebox{1.15}{$
    \left|
  \begin{smallmatrix} % V0 \cup P
    0 & 1    & 1      & 1      & 1      & 1      \\
    1 & 0    & \rho   & Y      & Z      & W      \\
    1 & \rho & 0      & \rho   & \rho   & \rho   \\
    1 & Y    & \rho   & 0      & d_{12} & d_{13} \\
    1 & Z    & \rho   & d_{12} & 0      & d_{23} \\
    1 & W    & \rho   & d_{13} & d_{23} & 0      \\
  \end{smallmatrix}
  \right| =
  \left|
  \begin{smallmatrix} % V1 \cup P
    0 & 1    & 1      & 1      & 1      & 1      \\
    1 & 0    & X      & \rho   & Z      & W      \\
    1 & X    & 0      & \rho   & d_{02} & d_{03} \\
    1 & \rho & \rho   & 0      & \rho   & \rho   \\
    1 & Z    & d_{02} & \rho   & 0      & d_{23} \\
    1 & W    & d_{03} & \rho   & d_{23} & 0      \\
  \end{smallmatrix}
  \right| =
  \left|
  \begin{smallmatrix} % V2 \cup P
    0 & 1    & 1      & 1      & 1      & 1      \\
    1 & 0    & X      & Y      & \rho   & W      \\
    1 & X    & 0      & d_{01} & \rho   & d_{03} \\
    1 & Y    & d_{01} & 0      & \rho   & d_{13} \\
    1 & \rho & \rho   & \rho   & 0      & \rho   \\
    1 & W    & d_{03} & d_{13} & \rho   & 0      \\
  \end{smallmatrix}
  \right| =
  \left|
  \begin{smallmatrix} % V3 \cup P
    0 & 1    & 1      & 1      & 1      & 1      \\
    1 & 0    & X      & Y      & Z      & \rho   \\
    1 & X    & 0      & d_{01} & d_{02} & \rho   \\
    1 & Y    & d_{01} & 0      & d_{12} & \rho   \\
    1 & Z    & d_{02} & d_{12} & 0      & \rho   \\
    1 & \rho & \rho   & \rho   & \rho   & 0      \\
  \end{smallmatrix}
  \right|$} &= 0
  \end{aligned}
\end{equation}

\begin{definition}\label{def:geom_adm_trivial}
%  Let $d_{ij}$ denote the mutual squared distance of two vertices $v_i,v_j$ of a tetrahedron
%  $T = \conv{V} \subset \R^3$, with $0 \leq i,j \leq 3$.
  A real solution $\rho, O^*=(X,Y,Z,W)_V$ to system \eqref{syst:gen:tetra} is called
  \begin{itemize}
  %%\item cyclic, if three of the four distance coordinates are equal;
  \item geometrically-admissible, if $X,Y,Z,W,\rho > 0$ and there is a configuration
    of four $\rho$-supporting spheres of $V$ meeting in $O^*$;
  \item trivial, if it is geometrically-admissible but $O^*$ lies on the circumscribed sphere of $T = \conv{V}$.
  \end{itemize}
\end{definition}

Giving a complete description of the solutions to this polynomial system with respect to the
distance parameters of the tetrahedron is out of the scope of this paper.
Nevertheless, for the regular tetrahedron it is possible to describe all solutions: this is done
in the next example.

\begin{example}[Regular tetrahedron]\label{ex:regular_non_cyclic}
  There are $7$ non-trivial, geometrically-admissible solutions to system \eqref{syst:gen:tetra}
  for the regular tetrahedron $T \subset \R^3$.
\end{example}
\begin{proof}
  We assume without loss of generality that $d_{01}=d_{02}=d_{03}=1=d_{12}=d_{13}=d_{23}$, so $T = \conv{V}$ is the regular tetrahedron with
  edge $1$. Eliminating $X,Y,Z,W$ from system \eqref{syst:gen:tetra} one gets the equation
  $$
  (8\rho-5)(8\rho-3)^2(32\rho-27)(64\rho^2-8\rho+1)=0.
  $$
%  that is the same as in \Cref{ex:regular} below, but with an additional factor $8\rho-5$.
  The solution $\rho=\frac38$ corresponds to trivial solutions where $O^*$ lies in the circumsphere
  (indeed in this case $\rho=R^*_T$). The solution $\rho = \frac{27}{32}$ corresponds
  to the case when $O^*$ is the center of $T$ (see also \Cref{ex:regular}).
%  For $\rho=\frac38=R_T^2$, one gets the following equations defining a quadratic surface in $\C^4$:
%  $$
%  X+Y+Z+W-3 = Y^2+YZ+Z^2+YW+ZW+W^2-3Y-3Z-3W+3 = 0.
%  $$
%  For $\rho = \frac{27}{32}$, one gets the unique solution $X=Y=Z=W=\frac38$.

  For $\rho=\frac58$, each of the remaining variables $X,Y,Z,W$ satisfies the condition $8\chi^2-20\chi+9=0$
  and the symmetric relation $X+Y+Z+W = 5,$ thus there are exactly six solutions $O^*$ for
  $\rho=\frac58$, namely the ones obtained from
  \begin{equation}\label{regular_non_cyclic}
    O^* = (X,Y,Z,W)_V = \Big({\footnotesize \frac{5+\sqrt{7}}{4},\frac{5+\sqrt{7}}{4},\frac{5-\sqrt{7}}{4},\frac{5-\sqrt{7}}{4}}\Big)_V
  \end{equation}
  by applying a permutation of the variables.

  Each of these solutions yields the following configuration of spheres.
  Consider the four double spheres as in \eqref{double_spheres}, of radius
  $r = \sqrt{\rho} = \sqrt{5/8}$ and satisfying property ($i$) for the regular tetrahedron $T$.
  These are boundaries
  of four double open balls of radius $r$: choose two open balls containing the fourth vertex and two open
  balls not containing it (there are $\binom{4}{2}=6$ such choices).
  As an illustrative example (cf. \Cref{fig:tet:reg:noncyclic}), the
  Cartesian coordinates of the point $O^*$ and of the
  centers of the four spheres corresponding to the solution \eqref{regular_non_cyclic} are:
  \begin{equation*}
    \begin{aligned}
      O^* &= \Big(0, -\frac{\sqrt{21}}{6}, \frac{\sqrt{6}-\sqrt{42}}{12}\Big)
      \hspace{0.8cm}
      w_0 = \Big(0,0,-\frac{\sqrt{42}}{12}\Big)
      \hspace{0.8cm}
      w_1 = \Big(0,-\frac{(1+\sqrt{7})\sqrt{3}}{9},\frac{(4+\sqrt{7})\sqrt{6}}{36}\Big) \\
      w_2 &= \Big(\frac{1-\sqrt{7}}{6},\frac{(1-\sqrt{7})\sqrt{3}}{18},\frac{(4-\sqrt{7})\sqrt{6}}{36}\Big) 
      \hspace{0.8cm}
      w_3 = \Big(\frac{\sqrt{7}-1}{6},\frac{(1-\sqrt{7})\sqrt{3}}{18},\frac{(4-\sqrt{7})\sqrt{6}}{36}\Big)
    \end{aligned}
  \end{equation*}
\end{proof}

\begin{figure}[!ht]
  \begin{center}
    \includegraphics[scale=1]{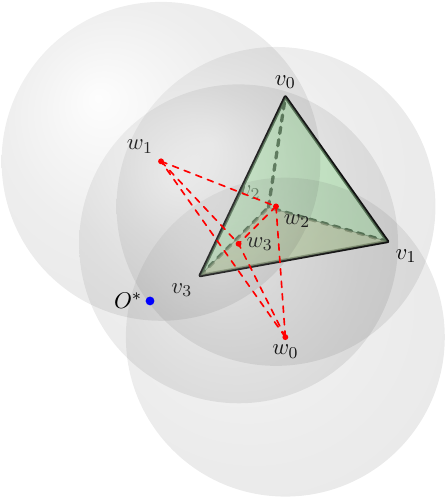}
  \end{center}
  \caption{One of the six solutions with $\rho=\frac58$ for the regular tetrahedron.}\label{fig:tet:reg:noncyclic}
\end{figure}

The solutions in \Cref{ex:regular_non_cyclic} satisfy $R^*>R_T$. For a general tetrahedron
the case $R^*=R_T$ has a connection with results of
Maehara and Tokushige in \cite{MT,MT2} on the existence of some special configurations of
five spheres in $\R^3$.

\begin{remark}\label{remark_maehara}
  Let $T \subset \R^3$ be any tetrahedron, and let $(O^*, \rho)$ be a geometrically-admissible
  solution of system \eqref{syst:gen:tetra}, with $\rho = R_T^2$. Then it is a trivial solution.
\end{remark}
\begin{proof}
  By contradiction, assume that there exist four distinct spheres $S_i, i=0,\ldots,3$, of radius $R_T$,
  satisfying properties
  ($i$) and ($iii$) from the Introduction, whose common intersection point $O^*$ does not lie on
  the circumsphere $S$ of $T$.
  We deduce that the sphere system $\{S,S_i | i=0,\ldots,3\}$ consists of five distinct spheres of equal
  radius, such that each subgroup of four has a common intersection, but the total intersection
  $S \cap \bigcap_{i=0}^3 S_i$ is empty. This is in contradiction with \cite[Thm.8.1]{MT2}.
\end{proof}

\section{Triangular pyramids}
\label{sec_tetrahedra}

In this section we make a twofold simplification to system \eqref{syst:gen:tetra}:
first we restrict it to a special class of tetrahedra (triangular pyramids) and further, we only
look for solutions $(X,Y,Z,W)$ satisfying $Y=Z=W$.

Consider the following geometric configuration: let $V = \{v_0,v_1,v_2,v_3\} \subset E$
be the set of four vertices of a {triangular pyramid} $T = \conv{V}$ in a three-dimensional real affine space $E$,
that is, $T\subset E$ is a tetrahedron such that one of its faces, say $\conv{\{v_1,v_2,v_3\}}$, is
equilateral and the fourth vertex $v_0$ is equidistant to the base vertices.

\begin{figure}[!ht]
  \begin{center}
    \includegraphics[scale=1]{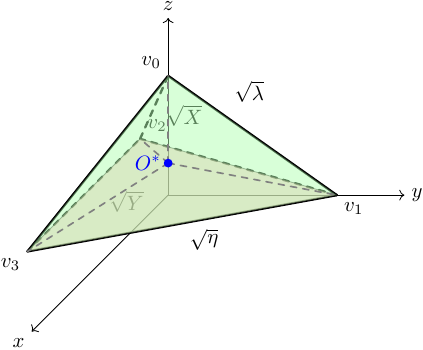}
  \end{center}
  \caption{A triangular pyramid.}
  \label{fig:tetraedro}
\end{figure}

We denote by ${\lambda}$ the squared distance of $v_0$ to each of the base vertices and by ${\eta}$ the squared
edge length of the base face, for some $\lambda,\eta>0$ (cf. \Cref{fig:tetraedro}).
Remark that the two parameters $\lambda,\eta$ identify a solid pyramid if and only if $0 < \eta < 3\lambda$.

We denote by $O^* \in E$ a point satisfying properties ($i$) and ($iii$) given in \Cref{sec_intro}, in other
words, $O^*$ is the unique intersection of four spheres $S_0,S_1,S_2,S_3 \subset \span{V} = \R^3$, of the same radius,
such that for every $0 \leq j \leq 3$, the sphere $S_j$ contains $v_i$, for $i \neq j$
(\Cref{fig:introduction}, right).
In the context of R-bodies (cf. \Cref{sec_rbodies}) we are particularly interested in configurations satisfying
the additional property ($ii$), see \Cref{sec_rbodies} and particularly \Cref{def:rbodyconf}.

Of course, as in the planar case, the trivial configuration where some of the four spheres coincide with
the circumsphere is in principle included in the algebraic solutions of system \eqref{syst:gen:tetra}, but
not interesting for our purposes. By symmetry of $T$, the solution $O^*$ of the polynomial system presented
below in \Cref{ssec_polsyst_tetra} is equidistant to the vertices of the base face, which leads to the trivial
solutions consisting in the north pole and the south pole of the circumsphere. With the aim of generalizing
Johnson's configuration to $\R^3$, we would like to exclude these solutions and this is why we call them
trivial in \Cref{def:geom_adm_trivial}.

\subsection{A polynomial system}
\label{ssec_polsyst_tetra}

Denote by $X$ (resp.~$Y$) the squared distances of $P$ to $v_0$ (resp. $v_1,v_2$ and $v_3$), so that
the vector of distance coordinates of $P$ with respect to $V$ is $(X,Y,Y,Y)_V$. Let $\rho$ be the squared
radius of the four spheres.
The center of the sphere $S_k$ is denoted by $w_k$ and $V_k := V \setminus \{v_k\}$, $k=0,1,2,3$.
System \eqref{syst:gen:tetra} simplifies to the following parametric polynomial system:

\begin{equation}\label{eqnl}
  \begin{aligned}[]
    [\det\text{CM}_{V \cup \{P\}}] & \hspace{1cm} 3(X-Y+\lambda)^2+4\eta X-12\lambda X = 0 \\ %%98
    [\det\text{CM}_{V_0 \cup \{w_0,P\}}] & \hspace{1cm} 3Y^2-4\rho(3Y-\eta) = 0 \\ %%101
    [\det\text{CM}_{V_1 \cup \{w_1,P\}}] & \hspace{1cm} 4\rho(4 Y \lambda - (X-Y-\lambda)^2-\eta X) -X(4\lambda Y - \eta X) = 0 \\
  \end{aligned}
\end{equation}
Of course by symmetry of $T$ the constraints $\det\text{CM}_{V_2 \cup \{w_2,P\}}=0$ and
$\det\text{CM}_{V_3 \cup \{w_3,P\}}=0$ are equal to the third constraint in \eqref{eqnl}.
Moreover up to scaling the tetrahedron, one can assume that $\lambda=1$,
which is what we do for most part of the remaining section.

Even if our model uses distance coordinates, the reader may assume that the three vertices $v_1,v_2,v_3$ lie in
the plane $z = 0$ in $\R^3$ and bound an equilateral triangle of size $\sqrt{\eta}$ and the fourth vertex
$v_0$ belongs to the $z$-axis at distance $\lambda=1$ from $v_1,v_2,v_3$, in other words that the Cartesian
coordinates of $V$ are:
\begin{equation}\label{vertices}
  v_0 =
  \left[
    \begin{smallmatrix}
      0 \\
      0 \\
      \sqrt{\frac{3-\eta}{3}}
    \end{smallmatrix}
    \right],
  \hspace{0.5cm}
  v_1 =
  \left[
    \begin{smallmatrix}
      0 \\
      \sqrt{\frac{\eta}{3}}\\
      0
    \end{smallmatrix}
    \right],
  \hspace{0.5cm}
  v_2 =
  \left[
    \begin{smallmatrix}
      -\frac{\sqrt{\eta}}{2} \\
      -\frac12 \sqrt{\frac{\eta}{3}}\\
      0
    \end{smallmatrix}
    \right],
  \hspace{0.5cm}
  \text{and }
  \hspace{0.2cm}
  v_3 =
  \left[
    \begin{smallmatrix}
      \frac{\sqrt{\eta}}{2} \\
      -\frac12 \sqrt{\frac{\eta}{3}}\\
      0
    \end{smallmatrix}
    \right].
\end{equation}
The circumcenter of the tetrahedron with vertices $v_0,v_1,v_2,v_3$ is then
$w = (0,0,\frac{3-2\eta}{2\sqrt{9-3n}})$. By symmetry, $O^*$ is in the $z$-axis, thus
$O^*=(0,0,z)$, for $z$ satisfying
\begin{equation}\label{XYeta}
  X = (z-\sqrt{(3-\eta)/{3}})^2 \text{ and } Y = z^2+{\eta}/{3}.
\end{equation}

\subsection{Description of the algebraic solutions}

First we need a technical lemma.

\begin{lemma}\label{lem:fandg}
  Let
  \begin{equation*}
    \begin{aligned}
      g &= 1024(\eta-3)\rho^3+(-704\eta^2+1920\eta+768)\rho^2+\eta(196\eta^2-732\eta+288)\rho+27\eta^2
      \in \Q(\eta)[\rho] \\
      f &= 432(\eta-3)t^3+108\eta(\eta-2)t^2-9\eta^2(\eta+1)t+\eta^4 \in \Q(\eta)[t]
    \end{aligned}
  \end{equation*}
  and let $\overline{\eta} = \frac{135}{98}+\frac{19}{98}\sqrt{57} \approx 2.841$ be the positive root
  of $49\eta^2-135\eta-12$. The following holds:
  \begin{enumerate}
  \item \label{lem:fandg_it1}
    For $\eta\in(0,\overline{\eta})$, $f$ has exactly one real root and
    for $\eta\in(\overline{\eta},3)$, it has three distinct real roots
  \item \label{lem:fandg_it2}
    For $\eta\in(0,\overline{\eta}) \setminus \{\frac{12}{5}\}$, $g$ has exactly one real root
    and for $\eta\in(\overline{\eta},3) \cup \{\frac{12}{5}\}$,
    it has three real roots counting multiplicities (a double one for $\eta=\frac{12}{5}$ and $\eta=\frac{20}{7}$)
  \item \label{lem:fandg_it3}
    For $\eta=\overline{\eta}$, both $f$ and $g$ have a simple real root and a double real root.
  \end{enumerate}
  All the roots mentioned in \Cref{lem:fandg_it1}, \ref{lem:fandg_it2} and \ref{lem:fandg_it3} are strictly
  positive.
\end{lemma}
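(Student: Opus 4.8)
The plan is to analyze the two univariate polynomials $f\in\Q(\eta)[t]$ and $g\in\Q(\eta)[\rho]$ as cubics whose coefficients depend on the parameter $\eta\in(0,3)$, and to count their real roots by tracking the sign of their discriminants as $\eta$ varies. For a cubic with real coefficients, the number of distinct real roots is governed by the sign of its discriminant $\Delta$: $\Delta>0$ gives three distinct real roots, $\Delta<0$ gives exactly one, and $\Delta=0$ signals a repeated root. So the first step is to compute, symbolically (with the help of Macaulay2 or any CAS, consistent with the paper's methodology), the discriminants $\Delta_f(\eta)$ and $\Delta_g(\eta)$ as polynomials in $\eta$, and to factor them.

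I expect $\Delta_f(\eta)$ to carry the factor $49\eta^2-135\eta-12$ (up to a sign and positive factors that do not vanish on $(0,3)$), so that its only root in $(0,3)$ is precisely $\overline{\eta}=\frac{135}{98}+\frac{19}{98}\sqrt{57}$, the stated root of that quadratic; the other quadratic root is negative and thus irrelevant. One then checks the sign of $\Delta_f$ on each side of $\overline{\eta}$ by evaluating at one convenient test point in $(0,\overline{\eta})$ and one in $(\overline{\eta},3)$ — for instance $\eta=1$ and $\eta=2.9$ — which yields item~\ref{lem:fandg_it1}: one real root on $(0,\overline{\eta})$, three distinct real roots on $(\overline{\eta},3)$. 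For item~\ref{lem:fandg_it2}, the analysis of $g$ is similar but the discriminant $\Delta_g(\eta)$ will have \emph{more} roots in $(0,3)$: besides $\overline{\eta}$, the values $\eta=\frac{12}{5}$ and $\eta=\frac{20}{7}$ appear as the loci where $g$ acquires a double root (as flagged in the statement). Thus I would factor $\Delta_g$ and verify that its real roots in $(0,3)$ are exactly $\{\frac{12}{5},\overline{\eta},\frac{20}{7}\}$, then determine the sign of $\Delta_g$ on each resulting subinterval by test-point evaluation, distinguishing where $g$ has one versus three real roots and isolating the double-root values.

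For item~\ref{lem:fandg_it3}, at $\eta=\overline{\eta}$ I would show that both $\Delta_f$ and $\Delta_g$ vanish, and then verify that the repeated root is exactly double (not triple) by checking that $\gcd(f,f')$ and $\gcd(g,g')$ have degree exactly one there; equivalently, that the second derivative does not also vanish at the repeated root, so that each cubic factors as a simple root times a squared linear factor over $\R$. Finally, to establish that \emph{all} the real roots are strictly positive, I would use the sign pattern of the coefficients: for $\eta\in(0,3)$ the leading coefficient $432(\eta-3)$ (resp.\ $1024(\eta-3)$) is negative while the constant term $\eta^4$ (resp.\ $27\eta^2$) is positive, so $f(0)>0$ and $g(0)>0$; combined with a sign check at a large positive argument and the fact that $f,g\to-\infty$ as $t,\rho\to+\infty$, Descartes' rule of signs (or Sturm sequences, as the paper invokes in \Cref{ssec:alggeometry}) rules out nonpositive real roots. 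Concretely, one confirms that all sign changes in the coefficient sequences occur among positive arguments, giving no negative roots, and $f(0),g(0)\neq0$ excludes the root $0$.

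The main obstacle I anticipate is the bookkeeping in item~\ref{lem:fandg_it2}: the discriminant $\Delta_g(\eta)$ is a higher-degree polynomial in $\eta$ whose factorization must be carried out carefully to certify that $\frac{12}{5}$, $\overline{\eta}$, and $\frac{20}{7}$ are its \emph{only} real roots in $(0,3)$ and to read off multiplicities correctly — in particular confirming that the double-root phenomenon at $\frac{12}{5}$ and $\frac{20}{7}$ corresponds to $\Delta_g$ having simple zeros there while the count stays at three-with-multiplicity rather than dropping to one. The cleanest way to discharge this rigorously is via Sturm sequences for $g$ with $\eta$ specialized to rationals bracketing each critical value, which replaces delicate discriminant-sign reasoning with an exact integer root count; I would use Sturm's theorem (\cite{BPR}) as the certifying tool on each subinterval and at each of the three distinguished values of $\eta$.
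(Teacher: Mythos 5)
Your proposal follows essentially the same route as the paper's proof: the paper computes and factors the discriminants $\Delta_t(f)=314928\,\eta^7(3-\eta)(49\eta^2-135\eta-12)$ and $\Delta_\rho(g)=196608\,\eta^3(3-\eta)(49\eta^2-135\eta-12)(5\eta-12)^2(7\eta-20)^2$, reads off the real-root counts from the sign of the discriminant on the subintervals determined by $\overline{\eta}$ (via \cite[Prop.~4.5]{BPR}), uses the conjugate-pair argument at the zero-discriminant values $\eta=\frac{12}{5},\frac{20}{7},\overline{\eta}$, and establishes positivity exactly as you propose, from the signs of the leading and constant coefficients together with Descartes' rule applied to $f(-t)$ and $g(-\rho)$ (your extra $\gcd(f,f')$ check against a triple root is a point where you are slightly more careful than the paper). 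The one claim to tighten is your blanket assertion that Descartes rules out negative roots for all $\eta\in(0,3)$: the coefficient of $t^2$ in $f(-t)$ is $108\eta(\eta-2)$, negative for $\eta<2$, so the all-positive-coefficients check fails there; as in the paper, you must split cases — in the unique-real-root regime positivity already follows from $f(0)>0$ and $f\to-\infty$, and Descartes is needed (and works) only for $\eta\in(\overline{\eta},3)\cup\{\tfrac{12}{5}\}$, where the coefficients of $f(-t)$ and $g(-\rho)$ are indeed all positive.
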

\begin{proof}
  The discriminant in $t$ of $f$ is $\Delta_t(f) = 314928\,\eta^7(3-\eta)(49\eta^2-135\eta-12)$ and the discriminant in $\rho$ of $g$ is $\Delta_{\rho}(g) = 196608\,\eta^3(3-\eta)(49\eta^2-135\eta-12)(5\eta-12)^2(7\eta-20)^2$.
  These discriminants vanish if and only if the corresponding polynomial has a double root (cf. \cite[Prop.~4.3]{BPR}) in which case, since they are cubics with real coefficients, they have one real root and one double real root (indeed the roots of a real polynomial come in pairs of conjugate complex numbers).
  This covers the cases $\eta=\frac{12}{5}, \frac{20}{7}$ and \Cref{lem:fandg_it3}. Concerning $f$, one has $\Delta_t(f)<0$ in $(0,\overline{\eta})$ and $\Delta_t(f)>0$ in $(\overline{\eta},3)$, which proves \Cref{lem:fandg_it1}, by applying \cite[Prop.~4.5]{BPR}. The same argument applied to $\Delta_{\rho}(g)$ yields \Cref{lem:fandg_it2}.
  
  Concerning the sign of the roots, for $\eta\in(0,3)$ the leading coefficients of $f$ and $g$ are negative and the constant coefficients are always positive, thus they have at least one positive real root. In particular, when there is a unique real root (in $(0,\overline{\eta}) \setminus \{\frac{12}{5}\}$), it is positive. Now assume $\eta \in (\overline{\eta}, 3) \cup \{\frac{12}{5}\}$: in this interval, the coefficients of polynomials $t \mapsto f(-t)$ and $\rho \mapsto g(-\rho)$ are all positive, thus by Descartes' rule of signs \cite[Thm.~2.33]{BPR}, $f(-t)$ and $g(-\rho)$ do not have positive roots. In other words, the roots of $f$ and $g$ in $(\overline{\eta},3) \cup \{\frac{12}{5}\}$ are positive and non-zero because $f(0)=\eta^4>0$ and $g(0) = 27\eta^2>0$ for $\eta>0$.
%  \begin{itemize}
%  \item For $g$, there is only
%    \footnote{Precisely: the sign pattern of his coefficients is $(-,-,+,+)$ for $\eta \in (0,\eta^*)$
%    and $(-,-,-,+)$ in $\eta^*,3$, where $\eta^* = \frac{}{}+\frac{}{}\sqrt{}$ is the }.
%  \item
%  \end{itemize}
\end{proof}

Next we prove our main result.
All the computations in the proof of \Cref{prop_piramidal} are done with Macaulay2 \cite{M2}.

\begin{theorem}
  \label{prop_piramidal}
  Let $\overline{\eta} = \frac{135}{98}+\frac{19}{98}\sqrt{57}$ be the positive root of $49\eta^2-135\eta-12$.
  The solutions to the polynomial system \eqref{eqnl}
  %%%%, with $\lambda=1$,
  correspond to one of the following cases (unless otherwise stated, $\lambda=1$ and $\eta$ is free):
  \begin{enumerate}
  \item \label{piramidal_northern}
    $(X,Y,\rho) = (0,1,\frac{3}{12-4\eta})$
    %%%and, for $\eta \in (0,3)$, $O^*=v_0$ is the northern vertex of the tetrahedron
  \item \label{piramidal_southern}
    $(X,Y,\rho) = (\frac{12}{12-4\eta},\frac{4\eta}{12-4\eta},\frac{3}{12-4\eta})$
    %%%and, for $\eta \in (0,3)$, $O^*$ is the southern pole of the circumsphere of the tetrahedron
    %%%%%%%%%%%%%%%%%%%%%%%%% = (0,0,-\frac{\eta}{\sqrt{9-3\eta}})
  \item \label{piramidal_cubic_onereal}
    for $\eta \in (0,\overline{\eta})$,
    except solutions of \Cref{piramidal_northern} and \Cref{piramidal_southern},
    there is a unique real and positive solution $\rho = \rho(\eta)$ and one corresponding
    point $O^*$
  \item \label{piramidal_cubic_threereal}
    for $\eta \in (\overline{\eta},3)$,
    except solutions of \Cref{piramidal_northern} and \Cref{piramidal_southern},
    there are three real and positive solutions $\rho = \rho_1(\eta), \rho_2(\eta), \rho_3(\eta)$ and
    three corresponding points $O^*_1,O^*_2$ and $O^*_3$
  \item \label{piramidal_eta0} $\lambda=0$ or $\eta \leq 0$ or $\eta \geq 3$
    %%%and $(X,Y,\rho) = (1,0,\rho)$ or $(X,Y,\rho) = (0, 1, \frac14)$
  \item \label{piramidal_complex} complex solutions
  \end{enumerate}
\end{theorem}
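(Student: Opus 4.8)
The plan is to study the ideal $I = \langle p_1,p_2,p_3\rangle \subset \Q(\eta)[X,Y,\rho]$ generated by the three polynomials of system \eqref{eqnl} with $\lambda = 1$, through the primary (equivalently radical) decomposition computed in Macaulay2. First I would dispatch \Cref{piramidal_northern} and \Cref{piramidal_southern} by direct substitution: plugging $(X,Y,\rho) = (0,1,\tfrac{3}{12-4\eta})$ and $(X,Y,\rho) = (\tfrac{12}{12-4\eta},\tfrac{4\eta}{12-4\eta},\tfrac{3}{12-4\eta})$ into $p_1,p_2,p_3$ and verifying that each vanishes identically in $\eta$. Reading \eqref{XYeta}, these are the two points where the symmetry $z$-axis meets the circumsphere, both carrying $\rho = R_T^2 = \tfrac{3}{12-4\eta}$; they are the trivial solutions we want to set aside.

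Next I would isolate the non-trivial locus by saturating $I$ against the ideals of the two poles (or by reading the decomposition directly). The residual component is zero-dimensional over $\Q(\eta)$: eliminating $X,Y$ yields the cubic $g \in \Q(\eta)[\rho]$ of \Cref{lem:fandg} as the relation on the squared radius, and eliminating to a coordinate $t$ of $O^*$ yields the cubic $f \in \Q(\eta)[t]$. Crucially, the second equation of \eqref{eqnl} recovers the radius rationally from the point, namely $\rho = \tfrac{3Y^2}{4(3Y-\eta)}$, so that each geometrically meaningful $O^*$ carries a single $\rho$ and the enumeration of solutions reduces to counting the real positive values of $t$, i.e.\ the real positive roots of $f$.

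With this reduction, \Cref{piramidal_cubic_onereal} and \Cref{piramidal_cubic_threereal} follow from \Cref{lem:fandg_it1}: for $\eta \in (0,\overline{\eta})$ the cubic $f$ has exactly one real positive root, giving a single point $O^*$ and its positive radius $\rho$; for $\eta \in (\overline{\eta},3)$ it has three, giving $O^*_1,O^*_2,O^*_3$ and $\rho_1,\rho_2,\rho_3$. The degenerate parameter values collected in \Cref{piramidal_eta0} are precisely those outside the solid-pyramid range $0 < \eta < 3\lambda$ (with the case $\lambda = 0$ appearing before the scaling to $\lambda = 1$, and the leading coefficients $1024(\eta-3)$, $432(\eta-3)$ of $g,f$ degenerating at $\eta = 3$), so the residual analysis does not apply there. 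All roots of $f$ and $g$ that are not real and positive make up \Cref{piramidal_complex}.

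The point I expect to be delicate is the reconciliation of the two cubics. By \Cref{lem:fandg_it2}, the radius cubic $g$ gains a real double root at $\eta = \tfrac{12}{5}$ and $\eta = \tfrac{20}{7}$, so reading the solution count off $g$ alone would be wrong: at $\eta = \tfrac{12}{5}\in(0,\overline{\eta})$ the extra real root of $g$ must be shown to arise from a non-real point $O^*$ (so that \Cref{piramidal_cubic_onereal} still returns one geometric solution), whereas at $\eta = \tfrac{20}{7}\in(\overline{\eta},3)$ two of the three genuine points happen to share one radius without reducing their number. Pinning down the correspondence between real roots of $f$ and admissible radii across these special values, and confirming that the poles of \Cref{piramidal_northern} and \Cref{piramidal_southern} stay distinct from the residual roots so the ``except'' clauses are non-vacuous, is where the primary-decomposition bookkeeping has to be done with care.
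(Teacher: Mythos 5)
Your proposal is correct and follows essentially the same route as the paper's proof: the same ideal in $\Q(\eta)[X,Y,\rho]$, the same eliminations producing the trivial factor $(4\eta\rho-12\rho+3)^2$ and the cubics $g$ and $f$ of \Cref{lem:fandg}, and the same real-root count via that lemma to obtain Items \ref{piramidal_cubic_onereal} and \ref{piramidal_cubic_threereal}. The only notable variation is your direction of the point--radius correspondence --- counting real positive roots of $f$ and recovering $\rho=\frac{3Y^2}{4(3Y-\eta)}$ rationally from the second equation of \eqref{eqnl}, where the paper counts roots of $g$ and then solves for the coordinate $z$ of $O^*$ --- which is in fact slightly cleaner at the bifurcation values $\eta=\frac{12}{5}$ and $\eta=\frac{20}{7}$ (where $g$ has a double root) that you correctly single out as the delicate cases.
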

\begin{proof}
  First, we exclude the case $\lambda=0$ (\Cref{piramidal_eta0}), substitute $\lambda=1$ in
  \eqref{eqnl}
  and let $I \subset \Q(\eta)[X,Y,\rho]$ be the ideal generated by these polynomials.
  Eliminating $X,Y$ from $I$, one gets a principal ideal
  $I \cap \Q(\eta)[\rho] = \langle\eta(4\eta \rho-12\rho+3)^2 g\rangle$, where
  \begin{equation}\label{polG}
    g = 1024(\eta-3)\rho^3+(-704\eta^2+1920\eta+768)\rho^2+\eta(196\eta^2-732\eta+288)\rho+27\eta^2
%%196\eta^3\rho-704\eta^2\rho^2+1024\eta\rho^3-732\eta^2\rho+1920\eta\rho^2-3072\rho^3+27\eta^2+288\eta\rho+768\rho^2.
  \end{equation}
  Necessarily $\eta(4\eta \rho-12\rho+3)^2 g=0$.
  The case $\eta=0$ is covered by \Cref{piramidal_eta0}.
%  the radical of $I+(\eta) \cap \Q[X,Y,\rho]$ has only two components: $(Y, X-1) \cap (4\rho-1,Y-1,X)$,
%  giving solutions $(X,Y,\rho) = (1, 0,\rho)$ and $(X,Y,\rho) = (0, 1,\frac14)$.

  Next we consider the case $4\eta \rho-12\rho+3=0$, that is $\rho = \frac{3}{12-4\eta}$.
  Let $J = I+\langle 4\eta \rho-12\rho+3\rangle
  \cap \Q[X,Y,\rho]$. The radical of $J$ has three prime components:
  $$
  \sqrt{J} = \langle Y-1,X\rangle \cap \langle Y-4\rho+1,X-4\rho\rangle \cap
  \langle Y-1,3X+8\rho-6,8\rho^2-6\rho+3\rangle
  $$
  The first component yields the solution $(X,Y,\rho) = (0,1,\frac{3}{12-4\eta})$ of
  \Cref{piramidal_northern}.
  The second one gives $(X,Y,\rho) = (\frac{12}{12-4\eta},\frac{4\eta}{12-4\eta},\frac{3}{12-4\eta})$
  (\Cref{piramidal_southern}).
  Finally, the third component yields complex solutions:
  $(\eta,X,Y,\rho)=((12\rho-3)/4\rho,(6-8\rho)/3,1,\rho)$
  with $\rho=(3\pm i\sqrt{15})/8$ (\Cref{piramidal_complex}).

  Now we turn to the case when $\rho$ and $\eta$ satisfy $g(\eta,\rho)=0$. We add this equation to $I$
  and we eliminate $\eta$ and $\rho$, yielding $K = I+\langle g\rangle \cap \Q[X,Y]$.
  The ideal $K$ is generated by the two polynomials $g_1=(X-Y-1)q$ and $g_2=(Y-4)(Y^2-Y+4)q$ where:
  \begin{equation*}
    q = 3X^3+3X^2Y+XY^2-7Y^3-9X^2-2XY+11Y^2+9X-Y-3.
  \end{equation*}
  The system $g_1=0,g_2=0$ is satisfied in the following cases. Either $(X,Y)=(5,4)$, which gives, by backward
  substitution, the unique solution $(\eta,X,Y,\rho) = (\frac{12}{5},5,4,\frac{5}{4})$ and
  $O^*=(0,0,-{4}/{\sqrt{5}})$, which is a special case of \Cref{piramidal_southern}.
  Or $Y$ satisfies $Y^2-Y+4=0$, leading to complex solutions (\Cref{piramidal_complex}).
  Or $X=Y+1$ and $q=0$, in which case after substitution one gets that $Y$ satisfies
  $Y^5-5Y^4+8Y^3-16Y^2=Y^2(Y-4)(Y^2-Y+4)=0$, which gives $Y=0$ (yielding the solution
  $(\eta,X,Y,\rho)=(0,1,0,\rho)$, \Cref{piramidal_eta0}) or $Y=4$ (again \Cref{piramidal_southern})
  or complex solutions.

  Finally, we are left with the case when the pair $(X,Y)$ satisfies the cubic
  bivariate equation $q=0$. Adding to $I+\langle g,q\rangle$ the polynomials
  $X-(z-s)^2$ and $3Y-z^2-\eta$ and $3s^2+\eta-3$ from \eqref{XYeta}
  and eliminating variables $X,Y,\rho$ and $s$,
  we get an ideal whose radical has prime components:
  $\langle z-1,\eta\rangle \cap \langle z+1,\eta\rangle \cap
  \langle 432(\eta-3)z^6+108\eta(\eta-2)z^4-9\eta^2(\eta+1)z^2+\eta^4\rangle$.
  The first two imply $\eta=0$ (\Cref{piramidal_eta0}) and the last one
  is the following cubic evaluated in $t=z^2$:
  \begin{equation}\label{f_eta}
  f(t) =
  432(\eta-3)t^3+108\eta(\eta-2)t^2-9\eta^2(\eta+1)t+\eta^4 \in \Q(\eta)[t]
  \end{equation}
  %First, assume $\eta \in (0,2)$: in this case, the sign pattern of the coefficients of $f$ is
  %$(-,-,-,+)$, thus there is only one positive root, by Descarte's Rule of Signs.
  %For $\eta=2$, one directly checks that $f = -432t^3-108t+16$ has only one real positive root.
  %  For $\eta\in(2,3)$, the sign pattern of coefficients is $(-,+,-,+)$ and the one of
  %  $g(t)=f(-t)$ is $(+,+,+,+)$: thus $f$ has either 1 or 3 positive roots,
  %  and has no negative roots.
  By \Cref{lem:fandg}, $f$ and $g$ (resp. in \eqref{f_eta} and in \eqref{polG})
  have exactly one real and positive root for $\eta \in (0,\overline{\eta})$
  and exactly three real and positive roots for $\eta \in (\overline{\eta},3)$.
  Thus for $\eta \in (0,\overline{\eta})$ there is exactly one real positive solution
  for $\rho$ and for this solution at most two solutions for the coordinate $z$ of $O^*$: one
  of these correspond to a positive $s$ ($s = \sqrt{(3-\eta)/3}$) and is acceptable, the other
  one corresponds to the situation where the tetrahedron is reflected with respect to
  the plane $z=0$, which is excluded by our choice in \eqref{vertices}. This is the solution
  in \Cref{piramidal_cubic_onereal}.
  Similarly, when $\eta \in (\overline{\eta}, 3)$, by \Cref{lem:fandg} there are exactly three real
  positive solutions for $\rho$. For each of these solutions, the corresponding value of $z$ can be computed
  by the second equation of system \eqref{eqnl}, combined with \eqref{XYeta}. This gives the solutions
  of \Cref{piramidal_cubic_threereal}.
\end{proof}

\subsection{Geometric solutions}\label{pyramid_geometric}

We first prove that, if the parameter $\eta$ defines a solid tetrahedron, then the solutions of the first two items of \Cref{prop_piramidal} are geometrically-admissible but trivial.

\begin{corollary}\label{sols_trivial}
  For $\eta \in (0,3)$, $R_T^2 = \frac{3}{12-4\eta}$ and
  the solution of \Cref{piramidal_northern} (resp. of \Cref{piramidal_southern})
  of \Cref{prop_piramidal} is the northern (resp. southern) vertex of $T = \conv V$.
\end{corollary}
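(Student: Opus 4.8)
The plan is to work entirely in the Cartesian coordinates \eqref{vertices} and to exploit the fact, already noted just before \eqref{XYeta}, that by the symmetry of the pyramid every candidate $O^*$ lies on the $z$-axis. Writing $O^* = (0,0,z)$, its distance coordinates are tied to the single scalar $z$ through \eqref{XYeta}. Under this parametrization the statement splits into three short verifications, all of which are elementary algebra in $z$ and $\eta$ (with $\lambda = 1$).

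First I would compute the squared circumradius independently of \Cref{prop_piramidal}, which is what makes the first assertion of the corollary meaningful. Using the circumcenter $w = (0,0,\frac{3-2\eta}{2\sqrt{9-3\eta}})$ recalled in \Cref{ssec_polsyst_tetra}, I would evaluate $R_T^2 = \lVert w - v_1\rVert_2^2 = \frac{\eta}{3} + w_z^2$, put everything over the denominator $12(3-\eta)$, and simplify the numerator $4\eta(3-\eta) + (3-2\eta)^2 = 9$ to obtain $R_T^2 = \frac{9}{12(3-\eta)} = \frac{3}{12-4\eta}$, as claimed.

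Next I would identify the two solutions geometrically as the poles of the circumsphere, i.e.\ the two points where the $z$-axis meets it, namely $z = w_z \pm R_T$. For the northern pole, $w_z + R_T$ telescopes to $\sqrt{(3-\eta)/3}$, which is exactly the $z$-coordinate of the apex $v_0$; hence the north pole equals $v_0$ itself. Feeding $z = \sqrt{(3-\eta)/3}$ into \eqref{XYeta} gives $X = 0$ and $Y = 1$, matching \Cref{piramidal_northern}, while $\rho = R_T^2$ places $O^*$ on the circumsphere. For the southern pole, $w_z - R_T = -\eta/\sqrt{3(3-\eta)}$; substituting into \eqref{XYeta} and using $z - \sqrt{(3-\eta)/3} = -3/\sqrt{3(3-\eta)}$ yields $X = \frac{12}{12-4\eta}$ and $Y = \frac{4\eta}{12-4\eta}$, exactly the values of \Cref{piramidal_southern}. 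Since $\rho = R_T^2$ in both cases, both points lie on the circumscribed sphere of $T$.

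Finally, to conclude triviality in the sense of \Cref{def:geom_adm_trivial}, it remains to settle geometric admissibility, i.e.\ to exhibit four $\rho$-supporting spheres of $V$ of radius $R_T$ meeting at each pole. Here I would invoke the double spheres $\mathcal{S}^{r}_i$ of \eqref{double_spheres}: for $\rho = R_T^2$ each $\mathcal{S}^{R_T}_i$ contains the circumsphere $S$ of $T$ as one of its two components, and since each pole lies on $S$, one recovers a configuration of supporting spheres passing through $O^*$. I expect this last point to be the only genuinely delicate part of the argument: at the north pole one has $O^* = v_0$, a vertex, so $X = 0$ and the positivity requirement of \Cref{def:geom_adm_trivial} holds only on the boundary, and the configuration there is degenerate and must be described with care; at the south pole, by contrast, all coordinates are strictly positive for $\eta \in (0,3)$ and the four supporting spheres are genuinely distinct. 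Everything else reduces to the routine simplifications in the single variable $z$ indicated above.
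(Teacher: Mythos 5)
Your proposal is correct and takes essentially the same route as the paper: both arguments identify the solutions of \Cref{piramidal_northern} and \Cref{piramidal_southern} by converting their distance coordinates into Cartesian coordinates along the $z$-axis via \eqref{XYeta}, recovering the apex $v_0$ and its antipodal point on the circumsphere. The only real difference is cosmetic --- you compute $R_T^2=\frac{3}{12-4\eta}$ directly from the circumcenter coordinates of \Cref{ssec_polsyst_tetra}, while the paper derives it from the Cayley--Menger determinant \eqref{detaugmentedCMvrho} --- and your closing discussion of geometric admissibility goes beyond what the corollary formally asserts (the paper's proof does not address it either).
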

\begin{proof}
  First we prove that $\frac{3}{12-4\eta} = R_T^2$: this is straightforward
  from \Cref{cayley_menger} with $n=3$, $D_{0}=D_{1}=D_{2}=D_3 = \rho$,
  $d_{01}=d_{02}=d_{03} =1$ and $d_{12}=d_{13}=d_{23} = {\eta}$, indeed, the determinant
  of the matrix in \eqref{detaugmentedCMvrho} is $4\eta^3\rho-12\eta^2\rho+3\eta^2$.
  Of course $(X,Y,Y,Y)_V=(0,1,1,1)_V$ are the distance coordinates of $v_0$. The second solution
  $(X,Y) = (\frac{12}{12-4\eta},\frac{4\eta}{12-4\eta})$, for $\eta \in (0,3)$, corresponds to
  the distance coordinates of a point $v$ whose Cartesian coordinates, applying \eqref{XYeta}, are
  $v=(0,0,-\frac{\eta}{\sqrt{9-3\eta}})$: this point thus satisfies $||v_0-w||^2 = ||w-v||^2 =
  \frac{3}{12-4\eta} = \rho$, where $w$ is the circumcenter of $T$. In other words, $v$ is the
  opposite point of $v_0$ in the circumsphere of $T$.
\end{proof}

The next theorem investigates configurations of spheres, with radius equal to $R_T$
(see also \Cref{remark_maehara}),
satisfying property ($i$) for a triangular pyramid $T$, with only algebraic methods.

\begin{theorem}\label{prop:radius_circumradius}
  Let $T$ be a triangular pyramid with parameter $\eta$.
  A solution $O^*$ to system \eqref{syst:gen:tetra} with $\rho=R_T^2$ is either equidistant
  to the base vertices ($Y=Z=W$) or coplanar with the base vertices (geometrically
  non-admissible) or lies on the sphere circumscribing $T$ (trivial).
\end{theorem}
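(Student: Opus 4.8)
The plan is to pin $\rho$ to the circumradius value and turn the full five-equation system \eqref{syst:gen:tetra} into a problem in the four distance coordinates $X,Y,Z,W$ over $\Q(\eta)$, then read the geometry off each irreducible component. Concretely, I would set $\lambda=1$ and substitute the triangular-pyramid distances $d_{01}=d_{02}=d_{03}=1$ and $d_{12}=d_{13}=d_{23}=\eta$ into \eqref{syst:gen:tetra}, together with $\rho=R_T^2=\frac{3}{12-4\eta}$, the value identified in \Cref{sols_trivial}. This produces five polynomials in $\Q(\eta)[X,Y,Z,W]$; let $I$ be the ideal they generate. Note that the symmetry $Y=Z=W$ is \emph{not} imposed here, so $I$ is genuinely richer than the reduced ideal attached to \eqref{eqnl}. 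The whole argument then rests on computing the radical and a primary decomposition of $I$ in Macaulay2, exactly as in the proofs of \Cref{prop_generalABC} and \Cref{prop_piramidal}.

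The key step is to match each prime component of $\sqrt I$ with one of the three alternatives. I expect three kinds of components. The equidistant locus should appear as a component containing the linear forms $Y-Z$ and $Z-W$, recovering the symmetric solutions already analysed through \eqref{eqnl}. The trivial locus should appear as the component on which $O^*$ lies on the circumsphere of $T$; here I would certify the identification by writing the condition $\lVert O^*-w\rVert_2^2=R_T^2$ (with $w$ the circumcenter) in distance coordinates, equivalently by checking containment in the vanishing locus of the appropriate Cayley--Menger relation, and matching it against the computed generators. The third component is the one I claim forces coplanarity: I would show it is contained in the zero set of $\det(\mathrm{CM}_{\{v_1,v_2,v_3,O^*\}})$, the Cayley--Menger determinant of the base face together with $O^*$, whose vanishing expresses exactly that the simplex $\{v_1,v_2,v_3,O^*\}$ has zero volume, i.e.\ that $O^*$ lies in the base plane.

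To close the coplanar case I would translate back to Cartesian coordinates via the placement \eqref{vertices}, substituting the squared-distance expressions of a general point $(x,y,z)$; intersecting with this component forces $z=0$. A point of the base plane lying on the sphere $S_0$ through $v_1,v_2,v_3$ of radius $R_T$ must lie on the circumcircle of the base face, and if it also lies on $S_1$ (through $v_0,v_2,v_3$) then it lies on $S_1\cap\{z=0\}$, a circle through $v_2,v_3$. These two base-plane circles already share $v_2$ and $v_3$, so their intersection is $\{v_2,v_3\}$ unless they coincide. In the first case $O^*\in\{v_2,v_3\}$ has a vanishing distance coordinate, contradicting $X,Y,Z,W>0$ in \Cref{def:geom_adm_trivial}; in the degenerate case the two circles coincide, forcing $S_1$ to be the circumsphere of $T$, which collapses the configuration to the trivial alternative or violates distinctness of the four spheres. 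Either way the coplanar component is geometrically non-admissible, and assembling the three identifications yields the statement.

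The main obstacle I anticipate is twofold. First, dropping the symmetry means the decomposition now involves four genuine variables plus the parameter $\eta$, so $I$ is substantially larger than in \eqref{eqnl} and the Macaulay2 computation, while feasible, is the computational crux; as in \Cref{prop_piramidal} one must also discard the spurious complex and $\eta$-degenerate components that such decompositions routinely carry. Second, the conceptual difficulty is the coplanar branch: proving it genuinely non-admissible, rather than merely degenerate, needs the geometric base-plane argument above, since admissibility in \Cref{def:geom_adm_trivial} is semialgebraic and not captured by ideal membership alone.
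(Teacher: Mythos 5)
Your overall strategy is the paper's: specialize \eqref{syst:gen:tetra} to $d_{01}=d_{02}=d_{03}=1$, $d_{12}=d_{13}=d_{23}=\eta$, impose $\rho=R_T^2=\frac{3}{12-4\eta}$, reduce to an ideal in the distance coordinates $X,Y,Z,W$, and read the trichotomy off a decomposition, passing to Cartesian coordinates for the geometric identification. But your key step --- matching each prime component of $\sqrt{I}$ with one of the three alternatives --- does not survive the actual computation, for two reasons. First, the paper (which eliminates both $\eta$ and $\rho$ rather than working over $\Q(\eta)$) finds that the projection to $(X,Y,Z,W)$-space is cut out by the single reducible polynomial
$$
(Y^2-YZ+Z^2-YW-ZW+W^2)\,(3X^2+Y^2-2YZ+Z^2-2YW-2ZW+W^2-6X+3),
$$
so there are exactly two irreducible pieces, not three. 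The equidistant locus is not a component containing the linear forms $Y-Z$ and $Z-W$: it is the quadric $Y^2-YZ+Z^2-YW-ZW+W^2=\frac12\left((Y-Z)^2+(Y-W)^2+(Z-W)^2\right)$, irreducible over $\Q$, whose \emph{real} points happen to form the linear space $Y=Z=W$. Second, and more damaging to your plan, the coplanar and circumsphere cases are not separate components: they both sit on the remaining factor, which after the substitution \eqref{XYeta} becomes $-8z\left(\frac12\sqrt{9-3\eta}\,(x^2+y^2+z^2-\frac{\eta}{3})+(\eta-\frac32)z\right)$, i.e.\ the union of the base plane and the circumsphere. This splitting involves $\sqrt{9-3\eta}$ and is invisible over $\Q(\eta)$ in distance coordinates, so your proposed certificate (a ``third component'' contained in the zero set of $\det(\mathrm{CM}_{\{v_1,v_2,v_3,O^*\}})$) would fail: no component is contained in the coplanarity locus. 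The repair is exactly the Cartesian translation you already planned; the case analysis must be carried out on the real locus after substituting \eqref{vertices}, which is what the paper does.

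With that correction your argument closes, and your treatment of the coplanar branch is in fact more careful than the paper's. The paper dismisses it in one line (``there is no sphere containing four coplanar vectors in $\R^3$'', which is only true generically, since four concyclic points are coplanar and lie on spheres), whereas your circle-intersection argument --- the base plane meets $S_0$ in the circumcircle of the base face and meets $S_1$ in a circle through $v_2,v_3$, so a common point is one of $v_2,v_3$ (a vanishing distance coordinate, excluded by \Cref{def:geom_adm_trivial}) unless $S_1$ coincides with the circumsphere, which lands in the trivial case --- actually proves the non-admissibility label rather than asserting it. Note, though, that this extra work is not needed for the theorem itself: its content is the trichotomy, and the parenthetical labels are interpretive.
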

\begin{proof}
  Recall from the proof of \Cref{sols_trivial}, that $R_T^2=\frac{3}{12-4\eta}$ for $\eta \in (0,3)$.
  Consider system \eqref{syst:gen:tetra} with
  $d_{01}=d_{02}=d_{03}=1$ and $d_{12}=d_{13}=d_{23}=\eta$. Let $I$ be the ideal generated
  by equations in \eqref{syst:gen:tetra}.
  Adding $4\eta\rho-12\rho+3$ to $I$ and eliminating $\eta,\rho$, one gets the ideal
  $(I+\langle 4\eta\rho-12\rho+3 \rangle) \cap \Q[X,Y,Z,W]$ generated by the reducible polynomial
  $$
  (Y^2-YZ+Z^2-YW-ZW+W^2)(3X^2+Y^2-2YZ+Z^2-2YW-2ZW+W^2-6X+3).
  $$
  The zero locus of this polynomial defines the Zariski-closure of the set of solutions $(X,Y,Z,W)$
  to system \eqref{syst:gen:tetra} with $\rho = R_T^2$, for a triangular pyramid.
  Using \eqref{XYeta}, the first factor yields $3 \eta (x^2+y^2)$, with real solutions $x=y=0$
  (the $z$-axis). The second factor yields:
  $$
  -8z\left(\frac12\sqrt{9-3\eta}\left(x^2+y^2+z^2-\frac{\eta}{3}\right)+\left(\eta-\frac32\right)z\right)
  $$
  whose real solutions are union of the plane $z=0$ and the sphere circumscribing $T$. Thus a real
  solution $O^*$ must be either equidistant to base vertices, or trivial or coplanar with
  $v_1,v_2,v_3$: this last case is geometrically non-admissible since there is no sphere containing
  four coplanar vectors in $\R^3$.
\end{proof}

\subsection{Examples}\label{piramidal_examples}

We discuss several examples of tetrahedra showing each of the solutions given in \Cref{prop_piramidal}, Items \ref{piramidal_northern} and \ref{piramidal_southern} (which we call {trivial}) and Items \ref{piramidal_cubic_onereal} and \ref{piramidal_cubic_threereal} ({non-trivial} solutions). Indeed, solutions of Items \ref{piramidal_northern}-\ref{piramidal_southern} are expected as special points of the circumsphere of $T$ (cf. \Cref{sols_trivial}) that correspond to the case when the four spheres coincide with the circumsphere.

We start illustrating \Cref{prop_piramidal} with the case of regular tetrahedron for which there is only one non-trivial solution $O^*$ equidistant to the vertices of the base.

\begin{figure}[!ht]
  \begin{center}
    \includegraphics{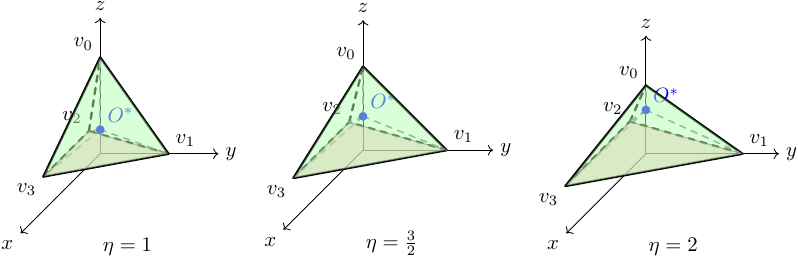}
  \end{center}
  \caption{Regular tetrahedron of \Cref{ex:regular} (on the left); hemispherical pyramid of
    \Cref{ex:hemispherical} (center); trirectangular pyramid of \Cref{ex:trirectangular} (on the right).}
  \label{fig:regular}
\end{figure}

\begin{example}[$\eta=1$, regular tetrahedron]\label{ex:regular}
  Fix $\eta=1$.
  The ideal $I \subset \Q[X,Y,\rho]$ generated by polynomials in \eqref{eqnl} is zero-dimensional of degree $6$.
  Eliminating $X$ and $Y$ from $I$, we get one equation for $\rho$:
  \begin{equation*}
  \begin{aligned}
    131072\rho^5-225280\rho^4  +129536\rho^3-31488\rho^2+3528\rho-243 &= \\
    = (8\rho-3)^2(32\rho-27)(64\rho^2-8\rho+1) &= 0
  \end{aligned}
  \end{equation*}
  cf. also \Cref{ex:regular_non_cyclic}.
  There are two real solutions, $\rho=\frac{3}{8}=R_T^2$ and $\rho=\frac{27}{32}$.
  Replacing $\rho=\frac{3}{8}$, we get the equations
  $2Y^2-3Y+1=X+3Y-3=0$, thus either $(X,Y)=(0,1)$, giving $v_0$, or $(X,Y)=(\frac32,\frac12)$, yielding
  the point $(0,0,-{1}/{\sqrt{6}})$, the trivial solutions of Items
  \ref{piramidal_northern}-\ref{piramidal_southern}.
  The second one $\rho=\frac{27}{32}$, the unique real root of $g$, yields $X=Y=\frac{3}{8}$,
  Cartesian coordinates $O^* = (0,0,{1}/{\sqrt{24}})$, the circumcenter of $T$ (cf. \Cref{fig:regular}, left).
  \endex
\end{example}

The next two examples show again pyramids with a unique real non-trivial solution $O^*$.
The first one is the tetrahedron inscribed in a hemisphere.

\begin{example}[$\eta=\frac32$, hemispherical pyramid]\label{ex:hemispherical}
  For $\eta=\frac32$, the circumcenter is coplanar with $v_1,v_2$, $v_3$ and the pyramid is inscribed in
  a hemisphere. The radius must satisfy $(2\rho-1)^2(2048\rho^3-2752\rho^2+738\rho-81)=0$, thus either
  $\rho=R_T^2=\frac12$ (giving the trivial solutions $(X,Y)=(0,1),(2,1)$) or $\rho\approx 1.0316$, the
  unique real root of the cubic factor. Forcing $2048\rho^3-2752\rho^2+738\rho-81=0$, that is adding this
  polynomial to the ideal, one gets equations $64X^3-120X^2+121X-18 = 64Y^3-88Y^2+45Y-9 =0$, 
  giving $z$ satisfying $16z^3+4\sqrt{2}z^2+2z-\sqrt{2}=0$, that is $z\approx 0.2865$ (\Cref{fig:regular},
  center). \endex
\end{example}

Recall that for a general tetrahedron, the orthocenter
is not well-defined, contrary e.g. to circumcenter, incenter or baricenter. Indeed the four altitudes of a
tetrahedron do not need to be concurrent and the orthocenter is usually replaced for tetrahedra with the
so-called Monge point, see for instance \cite{HHUL}. A tetrahedron where the four altitudes are concurrent
is called orthocentric and this is the case of triangular pyramids, where the orthocenter exists and coincides
with its Monge point.
As the next example shows, $O^*$ differs in general from the orthocenter, and hence from the Monge point,
of a triangular pyramid.

\begin{example}[$\eta=2$, trirectangular pyramid, continued]\label{ex:trirectangular}
  For $\eta=2$, the edges of $T$ containing $v_0$ are
  two-by-two orthogonal: the pyramid is called trirectangular. The Monge point (and orthocenter) of $T$ is $v_0$.
  The unique real root of $g = -1024\rho^3+1792\rho^2-784\rho+108$ is $\rho \approx 1.1746$. The distance
  coordinates $(X,Y,Y,Y)_V$ of $O^*$ must satisfy $36X^3-60X^2+73X-3=0$ and $12Y^3-24Y^2+19Y-6=0$, giving
  $O^* \approx (0,0,0.371)$. In particular $O^*$ is not the Monge point of $T$.
  %%%(See also \Cref{ex:12/5_2} for a previous discussion on the trirectangular pyramid.)
\end{example}

The next three examples are special inasmuch as the parameter $\eta$ is a root of either the discriminant of
$f$ or $g$ (cf. \Cref{lem:fandg}). This represents some bifurcations of the roots of the two polynomials and,
geometrically, to special configurations of spheres.

\begin{figure}[!ht]
  \begin{center}
    \includegraphics{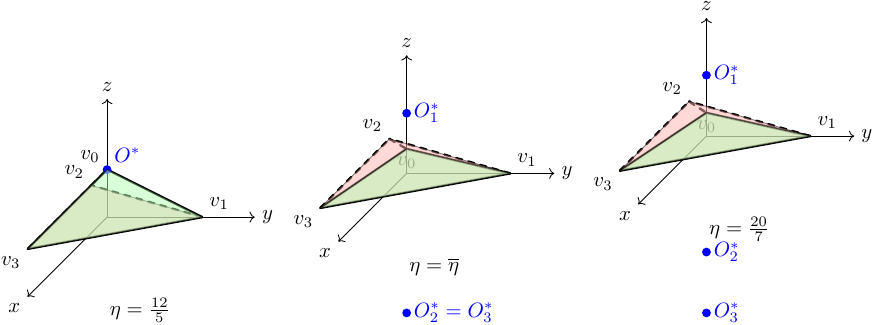}
  \end{center}
  \caption{Tetrahedra of \Cref{ex:12_5} (left), \Cref{ex:discrim} (center) and \Cref{ex:20_7} (right).}
  \label{fig:special}
\end{figure}

\begin{example}[$\eta=\frac{12}{5}$, $\Delta_\rho(g) = 0$ and $\Delta_\rho(f) < 0$]\label{ex:12_5}
  The trivial solutions are $(X,Y) = (0,1)$ and $(5,4)$, with $\rho=R_T^2=\frac54$.
  The polynomial $g$ in \eqref{polG} factors
  $$
  -\frac{3072}{5}\rho^3+\frac{33024}{25}\rho^2-\frac{101952}{125}\rho+\frac{3888}{25}
  = \frac{48}{125}(5-4\rho)(-9+20\rho)^2
  $$
  The first factor gives again $\rho=R_T^2$ and the solutions $O^*=v_0 = (0,1,1,1)_V$.
  However this solution corresponds to a non-trivial configuration of spheres: three spheres of radius
  equal to the circumradius $\sqrt{5/4}$, each one containing $v_0$ and
  two of the base vertices, but not the third one (in other words, the symmetric sphere of the circumsphere
  with respect to a lateral face); and the fourth sphere is the circumsphere. The four
  spheres intersect in $O^*=v_0$.
  The double root of $g$ is $\rho=\frac{9}{20}$ yielding only
  complex solutions: indeed $\Delta_\rho(f)<0$ for $\eta=\frac{12}{5}$ and by elimination $(X,Y)$ must
  satisfy $25X^2-45X+64=25Y^2-45Y+36=0$, that have complex solutions.
  %%This example also concludes the proof of \Cref{prop:cyclic}.
  \endex
\end{example}

In the next example ($\eta = \frac{20}{7}$) the polynomial $g$ has again a double root but yielding two
distinct (real) configurations of spheres.

\begin{example}[$\eta=\frac{20}{7}$, $\Delta_\rho(g) = 0$ and $\Delta_\rho(f) > 0$]\label{ex:20_7}
  The circumradius is $\sqrt{21}/2$ and the trivial solutions are $v_0=(0,0,{1}/{\sqrt{21}})$
  (for $X=0,Y=1$) and the south pole $(0,0,-{20}/{\sqrt{21}})$, (for $X=21,Y=20$).
  The polynomial $g$ has a simple root $\rho_2 = \frac{27}{28}$, yielding
  $(X,Y)=(\frac{12}{7},\frac{15}{7})$ and $O^*_2=(0,0,-{5}/{\sqrt{21}})$; and a double root
  $\rho_1=\rho_3=\frac54$, yielding the two solutions $O^*_3=(0,0,\frac{-5\sqrt{21}-21\sqrt{5}}{42})$
  (for $X=\frac{11+\sqrt{105}}{6}, Y=\frac{105+5\sqrt{105}}{42}$) and $O^*_1=(0,0,\frac{-5\sqrt{21}+21\sqrt{5}}{42})$
  (for $X=\frac{11-\sqrt{105}}{6}, Y=\frac{105-5\sqrt{105}}{42}$).
  \endex
\end{example}

The parameter $\eta$ of the following tetrahedron (that we call discriminant tetrahedron) is a zero
of both the discriminants $\Delta_\rho(f)$ and $\Delta_\rho(g)$, thus one of the configurations of
spheres has multiplicity two.

\begin{example}[$\eta=\frac{135}{98}+\frac{19}{98}\sqrt{57}$, discriminant tetrahedron]\label{ex:discrim}
  For $\eta=\overline{\eta}$, $\Delta_\rho(f)=\Delta_\rho(g)=0$ and $f$ and $g$ both have two real
  positive roots, one of which is double. The discriminant tetrahedron is pictured in
  \Cref{fig:special} (center).
  The roots of $g$ are $\rho_1 = \frac{7911+1035\sqrt{57}}{12544} \approx 1.2536$, giving radius
  $\sqrt{\rho_1} \approx 1.1196$ and $\rho_2 = \rho_3 = \frac{9+\sqrt{57}}{16} \approx 1.0344$, giving
  radius $\sqrt{\rho_2} \approx 1.0170$.
  The circumradius is $R_T \approx 2.1739$.
  %%The corresponding solutions (from \eqref{f_eta}) are
  %%%$t_1 \approx 0.32037819$ and $t_2=t_3\approx 1.722515$, giving simple roots
  %%$z \approx \pm 0.566019$ (simple) and $z \approx \pm 1.312446$ (double).
  The non-trivial solutions to system \eqref{eqnl} are given by $O^*_{1} \approx (0,0, 0.5660)$
  and $O^*_{2} = O^*_3 \approx (0,0,-1.3124)$. \endex
\end{example}

%\begin{figure}[!ht]
%  \input{fig6_pyramid_3root.tex}
%  %\vspace{1cm} \begin{center} FIGURA 6 \end{center} 
%  \caption{The three distinct real solutions for the tetrahedron with $\eta=\frac{29}{10}$ and the
%    centers of the four spheres.}
%  \label{fig:2910}
%\end{figure}
%
By \Cref{lem:fandg}, a tetrahedron with $\eta \in (\overline{\eta}, 3)$, admits three distinct configurations of
spheres satisfying properties ($i$) and ($iii$). We end this section with one of such examples.

\begin{example}[$\eta=\frac{29}{10}$]
  For $\eta=\frac{29}{10}$, the squared radius $\rho$ satifies the quintic equation
  $\frac{29}{10}(3-\frac25 \rho)^2 g=0$, with
  $g = -\frac{512}{5}\rho^3+\frac{10384}{25}\rho^2-\frac{135169}{250}\rho+\frac{22707}{100}.$
  The factor $3-\frac25 \rho$ gives the circumradius $\sqrt{\rho}=\sqrt{{15}/{2}} \approx 2.7386$
  and the two trivial solutions.
  %%: the northern vertex $v_0$ (for $(X,Y)=(0,1)$, \Cref{piramidal_northern}) and the south pole
  %%$O^*=(0,0,-\frac{29}{\sqrt{30}})$, for $(X,Y)=(30,29)$.
  The factor $g$ has three distinct real roots, $\rho_1 \approx 1.2370, \rho_2 \approx 0.9687$ and
  $\rho_3 \approx 1.8506$. Adding $g$ to the ideal, we are left with a zero-dimensional ideal of degree
  three, generated by equations
    $14400X^3-109320X^2+129649X-18750 = 0$ and $4800Y^3-45240Y^2+106807Y-73167 = 0$.
  This gives three real solutions in $z$, approximately
  $0.59227$ ($O^*_1$, with radius $\sqrt{\rho_1}$), $-0.93909$ ($O^*_2$, with radius $\sqrt{\rho_2}$)
  and $-2.3005$ ($O^*_3$, with radius $\sqrt{\rho_3}$). \endex
\end{example}

%%% EXAMPLE WITH eta=1/10
%% X = 0.91012619508199562886760441386286218426602394229176
%% Y = 0.03418519894599870340702650294962972776158880678212
%% z = 0.02918673693075966437938823958706717799996111984695
%% quindi l'unico punto ha coordinate (0,0,0.02918673693075966437938823958706717799996111984695)
%% e quindi sta dentro il tetraedro

\section{Connections with R-bodies}
\label{sec_rbodies}

This last section aims at making a connection of the results in \Cref{sec_tetrahedra} with the already mentioned
theory of $R$-bodies. All along the section $R$ denotes a fixed positive real number.

A general open ball of radius $r>0$ is denoted by $B(r)$ and for the special case $r=R$
it is simply denoted by $B$.
An $R$-body is the complement of a non empty union of open balls $B$ of radius $R$ in the
Euclidean space $\R^d$, see \cite{MLVRbodies}. These sets have been introduced and studied
by Perkal \cite{Perkal}, used in Walther \cite{Wal} and in Cuevas, Fraiman, Pateiro-L\'opez \cite{Cue}.
For a closed set $E$, its $R$-hulloid $co_R(E)$, namely the minimal $R$-body containing $E$, was
investigated recently in \cite{LMVnew}. In case $E$ is the set $V$ of vertices
of a plane triangle in $\R^2$ or the set of vertices of a tetrahedron $T$ in $\R^3$, a complete
description of $co_r(V)$ is possible for all $r > 0$.

It is straightforward to see, for every simplex $T=\conv{V} \subset \R^n$ given by the convex
hull of a set of $n+1$ affinely independent vectors $V = \{v_0,v_1,\ldots,v_n\} \subset \R^n$, that
for $r \leq R_T$ and for every point $x \in int(T)$, there exists a ball $B(r)$ such that $x \in B(r)$
and $V \cap B(r) = \emptyset$ (such ball is included in the ball $B(R_T)$ circumscribing $T$).
This fact implies that $co_r(V) = V$ for every $r \in (0, R_T]$ and hence the non-trivial case to be
analyzed is $r > R_T$.

A preliminary result in the planar case $n=2$ is in \cite{MLVRbodies}:

\begin{proposition}[{\cite[Thm.~4.2]{MLVRbodies}}]
  \label{cor(v)inplane}
  Let $V = \{v_0,v_1,v_2\} \subset \R^2$ be the set of vertices of a triangle $T = \conv{V}$ with
  circumradius $R_T$.
  If $r > R_T$, then
  $$
  co_r(V) = V \cup \tilde{T}
  $$
  where $\tilde{T} \subset T$ is the curvilinear triangle bordered by three arcs of circles of radius
  $r$, each one through two vertices of $T$. If $T$ is right-angled or obtuse-angled then the vertex
  of the major angle of $T$ is also a vertex of $\tilde{T}$.
\end{proposition}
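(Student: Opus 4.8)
The plan is to use the variational description of the $r$-hulloid as a complement of open balls and to reduce the statement to a covering problem for three equal disks. Since $co_r(V)$ is by definition the complement of the union of all open balls $B(c,r)$ of radius $r$ avoiding $V$, a point $x$ fails to lie in $co_r(V)$ exactly when there is a center $c$ with $\lVert c-x\rVert_2<r$ and $\lVert c-v_i\rVert_2\ge r$ for all $i$. Writing $C=\{c:\lVert c-v_i\rVert_2\ge r,\ i=0,1,2\}=\R^2\setminus\bigcup_i B(v_i,r)$, this says $x\in co_r(V)$ if and only if $B(x,r)\cap C=\emptyset$, that is,
\[
x\in co_r(V)\iff B(x,r)\subseteq\bigcup_{i=0}^{2}B(v_i,r)\iff \mathrm{dist}(x,C)\ge r.
\]
In particular each $v_i\in co_r(V)$ (as $B(v_i,r)$ is one of the covering balls) and $co_r(V)$ is bounded; the whole argument is then the computation of the erosion of $U=\bigcup_i B(v_i,r)$ by a disk of radius $r$.

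First I would fix the global shape of $U$ for $r>R_T$. Because $\tfrac12\lVert v_i-v_j\rVert_2\le R_T<r$, the three disks overlap pairwise, and the circumcenter $O$ satisfies $\lVert O-v_i\rVert_2=R_T<r$, so $O$ lies in the interior of all three disks. The bounded component of $C$ (the central ``hole'' of $U$) is nonempty precisely when the circles $\partial B(v_i,r)$ fail to cover $O$; since these circles are concurrent exactly at $r=R_T$ (all passing through $O$) and $O$ is interior to every disk once $r>R_T$, the hole is closed and $C$ is connected and unbounded. Consequently $\partial U=\partial C$ is a single closed curve built from the three outer arcs of the circles $\partial B(v_i,r)$, meeting at the ``outer'' pairwise intersection points $p_{ij}$ of $\partial B(v_i,r)$ and $\partial B(v_j,r)$.

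Next I would identify the boundary of $co_r(V)=\{x:\mathrm{dist}(x,C)\ge r\}$. If $x$ is a boundary point, its nearest point $c^{\ast}\in C$ satisfies $\lVert c^{\ast}-x\rVert_2=r$ and lies on $\partial U$. Should $c^{\ast}$ be in the relative interior of a smooth arc contained in some $\partial B(v_i,r)$, then $x$ lies along the inward normal at distance $r$, i.e. $x=c^{\ast}+r\,(v_i-c^{\ast})/\lVert v_i-c^{\ast}\rVert_2=v_i$; hence, away from the vertices, $c^{\ast}$ must be one of the corners $p_{ij}$. Since $\lVert p_{ij}-v_i\rVert_2=\lVert p_{ij}-v_j\rVert_2=r$, the circle of radius $r$ centred at $p_{ij}$ passes through both $v_i$ and $v_j$, and the corresponding boundary piece of $co_r(V)$ is an arc of this circle. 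This yields three arcs of radius $r$, each through two vertices and, because $p_{ij}$ lies on the far side of the chord $v_iv_j$, bulging inward; together with the vertices as corners they bound the concave curvilinear triangle $\tilde T\subseteq T$, giving $co_r(V)=V\cup\tilde T$. (Letting $r\to R_T^+$ pinches $\tilde T$ onto the three vertices, consistent with $co_r(V)=V$ for $r\le R_T$.)

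Finally I would settle the acute/obtuse dichotomy, which is where the main difficulty lies. The position of the corner $p_{jk}$ relative to the edge $v_jv_k$ is governed by the location of $O$: for an acute triangle $O\in\mathrm{int}(T)$, every inner intersection point is covered by the opposite disk, and all three vertices are genuine corners of $\tilde T$. For a right or obtuse triangle $O$ lies outside $T$, beyond the longest edge $v_jv_k$ opposite the major angle, so over that edge the inner and outer intersection points exchange roles and the arc structure must be re-examined; the delicate point is to check that the major-angle vertex still appears as a corner of $\tilde T$ rather than being absorbed into a smooth arc or detached. I expect this case, together with the rigorous verification that no bounded hole survives for $r>R_T$, to be the principal obstacle, whereas the reduction and the acute case follow routinely from the covering characterization above.
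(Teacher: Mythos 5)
The paper itself offers no proof of this proposition: it is imported verbatim from \cite[Thm.~4.2]{MLVRbodies}, and the only indication given (immediately after the statement) is that it is a direct consequence of Johnson's three circles theorem (\Cref{johnson_theorem}). Your route — computing $co_r(V)$ as the erosion of $U=\bigcup_i B(v_i,r)$ by a disk of radius $r$ — is therefore necessarily a different one, and its first two steps are sound: the characterization $co_r(V)=\{x : B(x,r)\subseteq U\}$ is correct, and so is the normal argument showing that every non-vertex boundary point of $co_r(V)$ has its nearest point of $C$ at a corner $p_{ij}$ of $\partial U$, hence lies on a circle of radius $r$ through $v_i,v_j$. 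The genuine error is in the final assembly. You claim that the three arcs bound $\tilde T$ ``together with the vertices as corners'', and you check consistency by letting $r\to R_T^+$ ``pinch $\tilde T$ onto the three vertices''. Both claims are false for acute triangles: when $r$ is slightly larger than $R_T$, the three circles centred at the points $p_{ij}$ do pass through the pairs of vertices, but their relevant mutual intersections lie near the orthocenter; $\tilde T$ is the small curvilinear triangle around the orthocenter, its corners are \emph{not} vertices of $T$, the three vertices are isolated points of $co_r(V)$ (which is exactly why the statement reads $V\cup\tilde T$ rather than $\tilde T$), and $\tilde T$ collapses onto the orthocenter, not onto $V$, as $r\to R_T^+$. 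This is precisely Johnson's theorem, and the paper states it explicitly right after the proposition: $co_r(V)$ tends to $V\cup\{O^*\}$ with $O^*$ the orthocenter, producing the discontinuity at $r=R_T$. So your argument misidentifies the shape and connectivity of $co_r(V)$ — the actual content of the statement — because it never decides which of the two intersection points of each pair of bounding circles is a corner of $\tilde T$, and the way you resolve it by default is the wrong one in the acute regime.

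In addition, the two points you yourself defer are real gaps, and one of them is half of the proposition. First, the no-hole claim: your criterion ``the bounded component of $C$ is nonempty precisely when the circles fail to cover $O$'' is not an equivalence — for an obtuse triangle and any $r<R_T$ the circumcenter is uncovered, yet $U$ has no hole — and the implication you actually need ($r>R_T$ implies $C$ has no bounded component) requires an argument, for instance that any bounded component of $C$ must contain a local maximum of $p\mapsto\min_i\lVert p-v_i\rVert_2$, and the only such local maximum is $O$, with value $R_T<r$. Second, the right/obtuse assertion — that the vertex of the major angle is a vertex of $\tilde T$ — is exactly the second sentence of the proposition, and you leave it as ``the principal obstacle''. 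A completion along your lines would have to show that at the major-angle vertex the second intersection point of the two bounding circles through that vertex falls outside the relevant region (so the vertex itself is a corner of $\tilde T$), whereas for an acute triangle with $r$ close to $R_T$ it falls inside, detaching the vertices from $\tilde T$. As it stands, the proposal is an outline that contains one incorrect structural claim and leaves the obtuse/right-angled half of the statement unproven.
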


The result in \cite[Thm.~4.2]{MLVRbodies} is a direct consequence of Johnson's three circles theorem
(\Cref{johnson_theorem} in the Introduction) and implies that $co_r(V)$ has non-empty interior
for every $r > R_T$.
Moreover one can show that for $r \to R_T^+$, the $r$-hulloid $co_r(V)$ tends to $V \cup \{O^*\}$,
where $O^*$ is the orthocenter of $T$, that is, $co_r(V)$ has a discontinuity for $r = R_T$
(indeed as mentioned one always has $co_{R_T}(V) = V$).
%For the critical case $r=R_T$, the three circles of radius $R_T$ each supporting a pair of
%vertices of $T$ (Johnson's circles) meet in the orthocenter $O^*$ of $T$ (cf. \Cref{fig:introduction}).

Determining the shape of $co_r(V)$ for a tetrahedron $T = \conv{V} \subset \R^3$ is a more subtle problem.

 \begin{definition}\label{defrL(V)} Let us define
   $$
   R^*_T = \inf \{\rho > R_T: co_\rho(V) \text{ has non-empty interior}\}.
   $$ 
  \end{definition}

 Since $co_{R_T}(V)=V$ and $r \mapsto co_r(V)$ is an inclusion-ordered map,
 it turns out that $R^*_T \geq R_T$. Moreover $R^*_T > R_T$ if and only if
 the following property holds:
 \begin{equation}\label{assumptionrhor(V)}
   \exists \, r > R_T: co_r(V)=V.
 \end{equation}
 
A recent result given in \cite{LNV} completes the description of $r$-hulloids of the set of vertices
of a general tetrahedron:

\begin{proposition}[{\cite[Thm.~3.11, Cor.~3.12]{LNV}}]
  \label{finale su T well centered}
  Let $V = \{v_0,v_1,v_2,v_3\}$ be the set of vertices of a tetrahedron
  $T = \conv{V}$ in $\R^3$ with circumradius $R_T$. Under the assumption
  \eqref{assumptionrhor(V)}, then
%  \begin{equation}
%    \exists \, r > R_T: co_r(V)=V
%  \end{equation}
  there exists $R^* > R_T$ and four spheres $S_j := \partial B_j(R^*), 0 \leq j \leq 3$, such that
  \begin{itemize}
  \item[(i)] $S_j$ contains the vertices $v_i$, for all $i \neq j$;
    \vspace{-0.2cm}
  \item[(ii)] $v_j \not\in B_j(R^*)$;
    \vspace{-0.2cm}
  \item[(iii)] $\bigcap_{j=0}^3 S_j = \{O^*\}$;
    \vspace{-0.2cm}
  \item[(iv)] $O^* \in int(T)$.
  \end{itemize}
  Moreover  
  \begin{eqnarray}\label{rho<rLV} 
   co_r(V) = & V  & \mbox{\quad for \quad }  r < R^* ; \\
  \label{co_R(V)=VcupO*} 
    co_r(V) = & V \cup \{O^*\} & \mbox{\quad for \quad }   r = R^* ; \\  
 \label{co_R(V)=}
 co_r(V)  =& V\cup \tilde{\Gamma} & \mbox{\quad for \quad }   R^* < r,
 \end{eqnarray}
  where  
  \begin{equation}\label{W=}
 \tilde{\Gamma} = int(T) \setminus \bigcup_{j=0}^3 B_j(r)
  \end{equation}
  is a non-empty connected set, and $\partial \tilde{\Gamma}$ is the union of connected subsets of
  $\partial B_j(r), 0 \leq j \leq 3$, the sphere of radius $r$ containing the vertices
  in $V\setminus \{v_j\}$.
\end{proposition}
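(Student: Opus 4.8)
The plan is to reduce the whole statement to the analysis of a single scalar field on $int(T)$. For $x\in int(T)$ define $\Phi(x)$ to be the supremum of the radii of open balls that contain $x$ and are disjoint from $V$. Since a ball $B(c,\rho)$ contains $x$ and misses $V$ exactly when $\|c-x\|<\rho\le\min_i\|c-v_i\|$, one has $\Phi(x)=\sup\{\min_i\|c-v_i\|\;:\;\|c-x\|<\min_i\|c-v_i\|\}$, i.e.\ the largest distance-to-$V$ attainable over the Voronoi cell of $x$ against the sites $V$. First I would record the two elementary facts that drive everything: (a) the existence of an empty ball of radius $\rho$ through $x$ is monotone in $\rho$ (shrink a larger empty ball keeping $x$ inside), whence $x\notin co_\rho(V)\iff\rho<\Phi(x)$, and therefore $co_\rho(V)\cap int(T)=\{x:\Phi(x)\le\rho\}$, the borderline value $\rho=\Phi(x)$ being settled, below, by the fact that the extremal empty balls touch $x$ on their boundary rather than in their interior; and (b) $\Phi$ is continuous and tends to $+\infty$ as $x$ approaches an open face of $T$ (flat outer balls avoid $V$ while capturing nearby interior points). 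Combining (a) with the definition gives $R^*:=R^*_T=\inf_{int(T)}\Phi$, and assumption \eqref{assumptionrhor(V)} is exactly $R^*>R_T$.

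By (b), $\Phi$ is coercive on the open set $int(T)$, so its infimum $R^*$ is attained at a point $O^*\in int(T)$, which gives (iv). I would then extract the four spheres from the extremal structure at $O^*$. The largest empty ball centered in the Voronoi cell of $O^*$ is attained either at the unconstrained maximizer of the distance to $V$, which is the circumcenter of $T$ and has value $R_T$, or on the bisector separating $O^*$ from $V$; since $R^*>R_T$, the first possibility is excluded, so every extremal center $w_j$ satisfies $\|w_j-O^*\|=\min_i\|w_j-v_i\|=R^*$. Such a center is thus a Voronoi vertex of the enlarged site set $\{O^*\}\cup V$, equidistant from $O^*$ and from three vertices forming a facet $V_j$: the corresponding sphere $S_j$ of radius $R^*$ passes through $V_j$ (property (i)) and through $O^*$, and excludes $v_j$ by emptiness (property (ii)). Minimality of $\Phi$ at $O^*$ forces all four facets to be active, for otherwise the outward directions of the active constraints could not positively span $\R^3$ and $O^*$ could be moved to decrease $\Phi$; this produces $S_0,\dots,S_3$. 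In the language of \Cref{prop_johnson_tridimensional} and system \eqref{syst:gen:tetra}, $(O^*,R^*)$ is then a geometrically-admissible solution, non-trivial precisely because $R^*>R_T$.

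For property (iii) I would exploit that the $S_j$ share the common radius $R^*$: their pairwise intersections lie on the radical planes, the perpendicular bisectors of the segments $w_iw_j$, so $\bigcap_j S_j$ is contained in the locus equidistant from $w_0,\dots,w_3$. It therefore suffices to show that the four centers are affinely independent, in which case that locus is a single point and $\bigcap_j S_j=\{O^*\}$; this nondegeneracy is again what $R^*>R_T$ guarantees, and it fails exactly on the trivial branch, as the explicit computations of \Cref{prop_piramidal} illustrate for triangular pyramids. Granting uniqueness of the minimizer $O^*$ of $\Phi$ (proved by the same caging analysis: two interior minimizers are incompatible with the four-facet balance), the level set $\{\Phi\le R^*\}\cap int(T)$ equals $\{O^*\}$, so $co_{R^*}(V)=V\cup\{O^*\}$; and for $r<R^*$ the strict inequality $\Phi>r$ on $int(T)$ yields $co_r(V)=V$, in particular $O^*\notin co_r(V)$ by shrinking the extremal ball.

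It remains to treat $r>R^*$, where by (a) one has $co_r(V)=V\cup\{x\in int(T):\Phi(x)\le r\}$, so I must identify the complement $int(T)\setminus co_r(V)=\{\Phi>r\}$ with $\bigcup_j B_j(r)$, the union of the four open balls whose spheres pass through the facets on the side opposite $v_j$. The inclusion $\supseteq$ is immediate, each $B_j(r)$ being an empty radius-$r$ ball; for $\subseteq$ I would give a sliding argument, starting from any empty radius-$r$ ball through an interior point and translating its center, keeping $x$ inside and $V$ outside, until the sphere is pinned on a full facet, landing in some $B_j(r)$. This yields \eqref{W=} with $\partial\tilde\Gamma$ contained in the $S_j$; nonemptiness follows since $\tilde\Gamma$ grows continuously out of the unique minimum $O^*$, and connectedness from star-shapedness of $\tilde\Gamma$ about $O^*$, each $B_j(r)$ being convex. \textbf{The main obstacle} is the optimality analysis of the two middle paragraphs, namely proving that the minimizer of $\Phi$ is unique and that its extremal empty balls are exactly the four facet-supporting ones; this is where the combinatorics of the tetrahedron (four facets against three dimensions) and the hypothesis $R^*>R_T$ enter essentially, everything else being bookkeeping around the monotonicity (a) and coercivity (b) of $\Phi$.
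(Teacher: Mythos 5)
You should first be aware that the paper contains no proof of this proposition at all: it is imported verbatim from \cite[Thm.~3.11, Cor.~3.12]{LNV}, so your attempt is effectively a from-scratch reproof of that external theorem, and it breaks down at exactly the points where that theorem is hard. The first genuine gap is your ``fact (a)''. The equivalence $x\notin co_\rho(V)\iff\rho<\Phi(x)$ amounts to claiming that the supremum defining $\Phi$ is never realized by an empty ball containing $x$ in its \emph{interior}, and this is false in general: the open circumball of a simplex contains every point of $int(T)$ and misses $V$, so for an acute plane triangle the orthocenter $O^*$ satisfies $\Phi(O^*)=R_T$ with the supremum attained, while $co_{R_T}(V)=V$; hence your identity $co_\rho(V)\cap int(T)=\{\Phi\le\rho\}$ fails at $\rho=R_T$ (only the sandwich $\{\Phi<\rho\}\subseteq int(T)\setminus co_\rho(V)^c$-style inclusions $\{\Phi<\rho\}\subseteq int(T)\cap co_\rho(V)^c$ and $co_\rho(V)\cap int(T)\subseteq\{\Phi\le\rho\}$ are true). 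In $\R^3$ the borderline case at $x=O^*$, $\rho=R^*$ is precisely the difference between \eqref{co_R(V)=VcupO*} and $co_{R^*}(V)=V$, i.e.\ it is part of the conclusion; deferring it to ``the extremal balls touch $x$ on their boundary'' is therefore circular, and your later circumcenter-exclusion argument settles it only at a minimizer of $\Phi$ whose existence is itself unproved, not at the other points and radii needed for \eqref{rho<rLV}--\eqref{co_R(V)=}.

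The second gap is coercivity. Your blow-up construction handles points of the open faces (and a variant, capping a large ball by a supporting plane through an edge, handles open edges), but near the \emph{vertices} $\Phi$ does not blow up: if balls of radii $\rho_n\to\infty$ missing $V$ contained points $v_j+\epsilon_n u$ with $u$ a fixed interior direction of the tangent cone at $v_j$, a limiting direction $w$ of their centers would satisfy $\langle v_i-v_j,w\rangle\le0$ for all $i$ together with $\langle u,w\rangle\ge0$, which is impossible since $u$ lies in the interior of the cone spanned by the $v_i-v_j$. So $\Phi$ stays bounded near each vertex, ``$\Phi$ is coercive on $int(T)$'' is wrong, and attainment of $\inf\Phi$ --- hence the existence of $O^*$, item (iv), and everything downstream --- does not follow. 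This is not a technicality: already in the plane, for an obtuse triangle the infimum of $\Phi$ escapes to the obtuse vertex and is \emph{not} attained in $int(T)$ (which is why $\tilde T$ in \Cref{cor(v)inplane} has that vertex as a vertex), and the analogous escape occurs for tetrahedra violating \eqref{assumptionrhor(V)}. Any correct attainment argument must therefore use the assumption beyond the single consequence $\inf\Phi>R_T$, which is all you extract from it. Finally, the two remaining pillars --- that at a minimizer all four facet constraints are active and the minimizer is unique (you flag this yourself as ``the main obstacle''), and the sliding lemma identifying $\{\Phi>r\}\cap int(T)$ with $\bigcup_j B_j(r)\cap int(T)$ for $r>R^*$ (where the sliding ball can get pinned on an edge, or on the sphere through a facet lying on the same side as the opposite vertex) --- are asserted rather than proved; they constitute the bulk of the actual work in \cite{LNV}.
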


In \Cref{finale su T well centered} the point $O^*$ and the critical radius $R^*$ are uniquely determined,
and $O^*$ is by construction the circumcenter of the tetrahedron $W$ with vertices the centers of the spheres
$\partial B_j$ and circumradius $R_W = R^*$.
Each ball $B_j(r)$ in the previous proposition is $r$-supporting $V$ at each vertex
in $V\setminus\{v_j\}$, according to {\cite[Def.~2.4]{LNV}} which we recall here:
a ball $B(r)$ is $r$-supporting a closed set $E \subset \R^d$ at $x \in \partial E$
if $x \in \partial B(r)$ and $E \subset (B(r))^c$. Recall that this definition has
been relaxed and extended to spheres in \Cref{def_supp_sphere}.

\begin{definition}\label{def:rbodyconf}
  A sphere configuration $(S_0,S_1,S_2,S_3)$ satisfying properties ($i$), ($ii$), ($iii$)
  and ($iv$) in \Cref{finale su T well centered} is called an $R^*$-body configuration for $V$.
  The spheres $S_j$ are boundaries of open balls $R^*$-supporting $V$.
\end{definition}

%Assumption \eqref{assumptionrhor(V)} in \Cref{finale su T well centered} guarantees the
%existence of $R^*$-body configurations for the set of vertices of any tetrahedron, but looks
%restrictive. The following lemma allows us to prove that this assumption holds for an
%explicited subclass of triangular pyramids.

Assumption \eqref{assumptionrhor(V)} or equivalently $R_T^*>R_T$,
in \Cref{finale su T well centered}, guarantees the
existence of $R^*$-body configurations for the set of vertices of any tetrahedron.
In case $R^*_T=R_T$, considered previously in \Cref{remark_maehara}, there is a connection with a recent
result \cite[Thm 8.1]{MT2}, showing that there are not $R_T$-body configurations in $\R^3$.
This result is not necessary in case $T$ is a triangular pyramid. Next lemma and theorem give the
complete description when  an $R^*$-body configuration exist, with only algebraic arguments.

%Assumption \eqref{assumptionrhor(V)} guarantees the existence of
%$R$-body configurations for the set of vertices of any tetrahedron, but looks restrictive for such
%general class. The results of this paper allow to prove that this assumption holds for the subclass of
%triangular pyramids.

\begin{lemma}\label{lem:rbodies_triangular}
  For $0 < \eta < \frac{12}{5}$, then \eqref{assumptionrhor(V)} is satisfied for the triangular
  pyramid $T$ with parameter $\eta$.
\end{lemma}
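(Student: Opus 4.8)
The plan is to translate \eqref{assumptionrhor(V)} into a single algebraic inequality about the critical radius and then verify that inequality by a direct sign computation. By the equivalence recorded just after \Cref{defrL(V)}, condition \eqref{assumptionrhor(V)} is the same as $R^*_T > R_T$. For a triangular pyramid, the only candidate squared radii for a non-trivial ($R^*$-body) configuration are, by \Cref{prop_piramidal}, the real positive roots of the cubic $g$ in \eqref{polG}: the trivial solutions of Items \ref{piramidal_northern}--\ref{piramidal_southern} carry $\rho=R_T^2$, while every non-trivial one carries $\rho$ a root of $g$. Since $\frac{12}{5}<\overline{\eta}$, \Cref{lem:fandg} guarantees that for $\eta\in(0,\frac{12}{5})$ the polynomial $g$ has a \emph{unique} real positive root $\rho(\eta)$, the one attached to \Cref{piramidal_cubic_onereal}; so the whole question reduces to comparing $\rho(\eta)$ with $R_T^2$.

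The computational heart is the inequality $\rho(\eta)>R_T^2$. Using $R_T^2=\frac{3}{12-4\eta}=\frac{3}{4(3-\eta)}$ from \Cref{sols_trivial}, I would evaluate $g$ at $\rho=R_T^2$: the identity $(3-\eta)R_T^2=\frac34$ collapses the cubic term, and after clearing denominators the result factors as
\begin{equation*}
  g(R_T^2)=\frac{-12\,\eta\,(5\eta-12)\,(2\eta^2-9\eta+12)}{(3-\eta)^2}.
\end{equation*}
On $(0,\frac{12}{5})$ one has $\eta>0$, $5\eta-12<0$, and $2\eta^2-9\eta+12>0$ (its discriminant is $-15$), so $g(R_T^2)>0$. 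Since the leading coefficient $1024(\eta-3)$ of $g$ is negative for $\eta<3$, we have $g(\rho)\to-\infty$ as $\rho\to+\infty$; hence $g$ changes sign on $(R_T^2,+\infty)$, and by uniqueness its real root satisfies $\rho(\eta)>R_T^2$. Writing $R^*=\sqrt{\rho(\eta)}$, this is exactly $R^*>R_T$. (As a consistency check, at $\eta=1$ the formula gives $g(3/8)=105>0$ and the root is $\rho=\frac{27}{32}>\frac38=R_T^2$, matching \Cref{ex:regular}; at $\eta=\frac{12}{5}$ the factor $5\eta-12$ vanishes, which is why the two regimes meet there.)

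The remaining, and conceptually hardest, step is to convert the strict inequality $R^*>R_T$ into \eqref{assumptionrhor(V)}, that is, to exhibit an explicit $r>R_T$ with $co_r(V)=V$. The natural choice is any $r$ in the open interval $(R_T,R^*)$: I would argue that $V$ is an $r$-body there because a point of $co_r(V)\setminus V$ would be, by the rotational and reflective symmetry of the pyramid, an axis point $O^*$ trapped by four $r$-supporting spheres, and such a trapped configuration is precisely a non-trivial geometrically-admissible solution of \eqref{eqnl} (the case $Y=Z=W$ of \eqref{syst:gen:tetra}); by the computation above these occur only at $\rho=\rho(\eta)$, i.e. at $r=R^*$, and never for $r<R^*$. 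Making this rigorous is where I expect the main obstacle to lie: one must know that the threshold $R^*_T$ is actually attained at such a four-sphere configuration and that this configuration is genuinely admissible (property (ii) with $O^*\in\mathrm{int}(T)$) rather than coplanar or trivial. For this I would lean on the $R$-body machinery recalled in \Cref{finale su T well centered}, together with \Cref{prop:radius_circumradius} and \Cref{remark_maehara}, which already exclude non-trivial admissible configurations at $\rho=R_T^2$ and thereby guarantee a genuine gap between $R_T$ and the first admissible critical radius $R^*=\sqrt{\rho(\eta)}$.
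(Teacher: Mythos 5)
Your algebraic core is correct, and it actually streamlines the paper's computation. The paper proves the same key fact---that $g$ has no root in $(0,R_T^2]$---by running Sturm's theorem on that interval: it computes the full Sturm sequence of $g$ and counts sign variations of its evaluations at $\rho=0$ and $\rho=\frac{3}{12-4\eta}$ (\Cref{app:sturm}). Your shortcut evaluates $g$ at the single point $\rho=R_T^2$ (your expression coincides, after rewriting $-(5\eta-12)=(12-5\eta)$ and $(3-\eta)^2=(\eta-3)^2$, with the entry $w_0$ of the paper's Sturm evaluation, and I verified it is correct), observes that the leading coefficient $1024(\eta-3)$ is negative, and invokes uniqueness of the real root from item \ref{lem:fandg_it2} of \Cref{lem:fandg} to conclude that this root lies above $R_T^2$; this is legitimate since $(0,\tfrac{12}{5})\subset(0,\overline{\eta})\setminus\{\tfrac{12}{5}\}$. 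So where the two arguments diverge technically, yours is a cleaner route to the same algebraic conclusion: the unique candidate squared radius satisfies $\rho(\eta)>R_T^2$, equivalently $g$ does not vanish on $(0,R_T^2]$.

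Both you and the paper become informal at the bridge from this algebraic fact to \eqref{assumptionrhor(V)}: the paper dispatches it in one sentence (``by continuity''), implicitly deferring to the $R$-hulloid machinery of \cite{LNV}. But your proposed way of making the bridge rigorous has two concrete defects. First, the symmetry argument is invalid as stated: $co_r(V)$ is invariant under the symmetry group of $T$, so a point of $co_r(V)\setminus V$ only yields a symmetric \emph{orbit} of such points, not a point on the axis; and even for an axis point, the existence of four $r$-supporting spheres meeting exactly there is precisely the nontrivial content of the hulloid theory, not something that follows from the definition of $co_r(V)$. Second, leaning on \Cref{finale su T well centered} is circular: that proposition has \eqref{assumptionrhor(V)} as its hypothesis, so it cannot be used to establish \eqref{assumptionrhor(V)}. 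In short, your proposal is correct (and simpler than the paper) exactly in the part where it differs from the paper, and is incomplete in the same place where the paper is terse; to repair it you should drop the appeal to \Cref{finale su T well centered} and the axis-point claim, and defer, as the paper implicitly does, to the limiting argument of \cite{LNV} that produces a four-sphere configuration at the critical radius.
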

\begin{proof}
  Put $\rho=r^2$.
  We prove below that for $0 < \eta < \frac{12}{5}$, the polynomial $g$ in \eqref{polG}
  does not vanish in $\rho \in (0,R_T^2]$, where $R_T^2 = \frac{3}{12-4\eta}$ is the squared
    circumradius of $T$. Since the equality $co_r(V) = V$ is always satisfied for
    $r \in (0,R_T]$, we conclude that by continuity there must exist $r > R_T$ with $co_r(V) = V$.

  Consider $g \in \Q(\eta)[\rho]$ as a univariate
  polynomial in $\rho$ with coefficients depending on $\eta$. Its Sturm sequence
  $s = (g_0,g_1,g_2,g_3) \in \Q(\eta)[\rho]^4$ is defined as
  \begin{equation}\label{sturm_g}
    g_0 := g \hspace{0.7cm}
    g_1 := \frac{dg}{d\rho} \hspace{0.7cm}
    g_2 := -\text{rem}_\rho(g_0,g_1) \hspace{0.7cm}
    g_3 := -\text{rem}_\rho(g_1,g_2)
  \end{equation}
  where $\text{rem}_\rho(a,b)$ is the reminder of the division of $a$ by $b$ as polynomials in $\rho$.
  Denote by $s[0] = (v_0,v_1,v_2,v_3) \in \Q(\eta)$ and by $s[\frac{3}{12-4\eta}] = (w_0,w_1,w_2,w_3)
  \in \Q(\eta)$ the evaluation of $s$ at $\rho=0$ and at $\rho=\frac{3}{12-4\eta}$, respectively
  ($s,s[0],s[\frac{3}{12-4\eta}]$ are given explicitly in \Cref{app:sturm}).
  Remark that $v_3 = w_3 = g_3 \in \Q(\eta)$ and that it is always positive in
  $\eta \in (0,\frac{12}{5})$. Moreover $v_0=27\eta^2$ and $w_0$ is a positive multiple of $\eta(12-5\eta)$,
  thus positive in $(0,\frac{12}{5})$.

  Now define $v_1^*,v_2^*$ the unique roots of $v_1,v_2$ in $(0,\frac{12}{5})$: one has $v_1 \geq 0$
  in $(0,v_1^*)$, $v_1 \leq 0$
  in $(v_1^*,\frac{12}{5})$, $v_2\leq 0$ in $(0,v_2^*)$ and $v_2 \geq 0$ in $(v_2^*,\frac{12}{5})$.
  We deduce that for every $\eta \in (0,\frac{12}{5})$, there are exactly two sign variations in $s[0]$.
  Similarly, let $w_1^* < w_1^{**} \in (0,\frac{12}{5})$ be the two real roots of the numerator of $w_1$.
  Then $w_1 \leq 0$ in $(0,w_1^*) \cup (w_1^{**},\frac{12}{5})$ and $w_1 \geq 0$ in $(w_1^*,w_1^{**})$,
  whereas the rational function $w_2$ is always negative in $(0,\frac{12}{5})$. Thus for every
  $\eta \in (0,\frac{12}{5})$, there are exactly two sign variations in $s[\frac{3}{12-4\eta}]$.
  We conclude by Sturm's Theorem \cite[Thm.~2.62]{BPR} that for every $\eta \in (0,\frac{12}{5})$,
  the polynomial $g$ does not vanish in $(0,\frac{3}{12-4\eta})$, as claimed.
\end{proof}

%\newpage
%In \cite{LNV} the existence of $O^*$ and $R^+$ is obtained with geometric analytic arguments.  Since $O^*$ verifies i) and $R^*$ is the radius of balls satisfying ii) the exact determination of $O^*$ and $R^*$ is strictly related to the algebraic symbolic computation of previous sections, based only on constraints i) and ii).  A solution of the algebraic symbolic computation could be not   satisfy  constraint iii). This happens in the class of triangular pyramid. 

%Segue a questo punto il Teorema 4 che hai scritto. Io non ho la versione latex e quindi ne parliamo domani.

With a similar technique, one can prove the following result.

\begin{lemma}\label{lem:rbodies_triangular2}
  For $\frac{12}{5} \leq \eta < 3$, then every solution $O^*$ to system \eqref{eqnl}
  for the triangular pyramid $T$ with parameter $\eta$,
  is such that $O^* \not\in int(T)$.
\end{lemma}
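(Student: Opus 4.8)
The plan is to reduce the whole system \eqref{eqnl} to a single univariate condition in the axial coordinate of $O^*$, and then control the location of its roots by elementary monotonicity rather than a full root count. Since the distance coordinates of $O^*$ have the form $(X,Y,Y,Y)_V$, the point lies on the axis of the pyramid, $O^* = (0,0,z)$, and in the Cartesian model \eqref{vertices} with apex height $s := \sqrt{(3-\eta)/3}$ we have $X = (z-s)^2$ and $Y = z^2 + \eta/3$, exactly as in \eqref{XYeta}. The axis meets $\mathrm{int}(T)$ precisely in the open segment from the base centroid $(0,0,0)$ to the apex $v_0 = (0,0,s)$, so the key reformulation I would use is
\[
 O^* \in \mathrm{int}(T) \iff 0 < z < s ,
\]
and it suffices to show that for $\tfrac{12}{5} \le \eta < 3$ no solution has $z \in (0,s)$.

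First I would substitute \eqref{XYeta} into \eqref{eqnl}. The first equation becomes an identity (it only records that $P \in \span{V} = \R^3$). In the second equation the fortunate cancellation $3Y - \eta = 3z^2$ occurs, which forces $z \neq 0$ (otherwise $Y = \eta/3 > 0$ would give $3Y^2 = 0$) and yields $\rho = Y^2/(4z^2)$. Substituting this into the third equation, and noticing that $4Y - (X-Y-1)^2 - \eta X$ collapses to $\tfrac{\eta}{3}X$, the third equation factors as $X$ times a polynomial; removing the factor $X = (z-s)^2$, whose vanishing is the boundary solution $O^*=v_0$, leaves a single quartic $Q(z)$ with leading term $36(\eta-3)z^4$, whose real roots are exactly the axial coordinates of all solutions with $O^* \neq v_0$.

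The decisive move is to factor out the trivial southern solution: the south pole of the circumsphere sits at $z = -\eta/(3s)$ (cf.\ \Cref{sols_trivial}), so $(3sz+\eta)$ divides $Q$, giving $Q(z) = (3sz+\eta)\,C(z)$ with $C$ cubic. On $(0,s)$ the linear factor is positive, so there $\operatorname{sign} Q = \operatorname{sign} C$. Now $C'(z) = -108\,s\,z^2 - 12\eta z + \eta(\eta-3)/s$ is a sum of three terms each negative for $z>0$ (using $0<\eta<3$, $s>0$), so $C$ is strictly decreasing on $(0,\infty)$; the endpoints evaluate to $C(0) = \eta^2 > 0$ and, after simplifying with $3s^2 + \eta = 3$, to the clean value $C(s) = 3(5\eta - 12)$.

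This is exactly where the threshold $\eta = \tfrac{12}{5}$ surfaces. For $\eta \ge \tfrac{12}{5}$ one has $C(s) \ge 0$; since $C$ is strictly decreasing on $(0,s)$ it stays above $C(s) \ge 0$, so $C > 0$ and hence $Q > 0$ on the whole open interval $(0,s)$. Thus $Q$ has no root in $(0,s)$, while the remaining solution $v_0$ has $z = s \notin (0,s)$, so no $O^*$ lies in $\mathrm{int}(T)$, as claimed. The only genuine computation is the two polynomial simplifications (the collapse of the third equation and the endpoint value $C(s)=3(5\eta-12)$), and I expect the main obstacle to be carrying out these eliminations so that the \emph{monotonic} cubic factor $C$—rather than the even degree-six relation $f(z^2)$ of \Cref{prop_piramidal}—emerges; once it does, the sign analysis on $(0,s)$ is immediate and sidesteps any Sturm-sequence bookkeeping.
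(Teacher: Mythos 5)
Your proof is correct, and it takes a genuinely different route from the paper's. I checked the computations on which it rests: with $X=(z-s)^2$, $Y=z^2+\eta/3$ and $3s^2=3-\eta$, the first equation of \eqref{eqnl} is indeed an identity, the second forces $z\neq 0$ and $\rho=Y^2/(4z^2)$, and the third collapses to $X\bigl(\tfrac{4\eta\rho}{3}-4Y+\eta X\bigr)=0$; clearing denominators in the second factor gives $Q(z)=36(\eta-3)z^4-54\eta s z^3-3\eta(\eta+3)z^2+\eta^3$, which factors as $(3sz+\eta)\,C(z)$ with $C(z)=-36sz^3-6\eta z^2-3\eta sz+\eta^2$, and one verifies $C(0)=\eta^2$, $C(s)=3(5\eta-12)$ and $C'(z)=-108sz^2-12\eta z-3\eta s<0$ for $z\ge 0$, exactly as you claim. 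The paper instead eliminates all variables including the apex height $s$, producing the even polynomial $f(z^2)$ of \eqref{f_eta} (a cubic in $t=z^2$), and then applies Sturm's theorem on $(0,\tfrac{3-\eta}{3})$, with the Sturm sequence written out in \Cref{app:sturm2} and a sign analysis of its entries as rational functions of $\eta$ over subintervals of $[\tfrac{12}{5},3)$; the endpoint $\eta=\tfrac{12}{5}$ is handled there as a separate special case. By keeping $s$ symbolic you get a quartic rather than a sextic, and you can split off the known south-pole root $z=-\eta/(3s)$ (cf.\ \Cref{sols_trivial}); monotonicity of the residual cubic plus two endpoint evaluations then replaces all Sturm bookkeeping, treats $\eta=\tfrac{12}{5}$ uniformly (there $C(s)=0$, so the extra root sits exactly at the apex $z=s$, which is not interior), and makes the threshold conceptually transparent: $\tfrac{12}{5}$ is precisely where $C(s)$ changes sign, i.e.\ where the non-trivial solution crosses the apex. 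What the paper's method buys in exchange is robustness and uniformity with the rest of the article: Sturm sequences require no fortunate factorization, are generated mechanically by computer algebra, and reuse the polynomial $f$ already introduced for \Cref{prop_piramidal} and analyzed in \Cref{lem:fandg}.
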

\begin{proof}
  Recall that according to the Cartesian coordinates in \eqref{vertices}, the north vertex
  of the tetrahedron has coordinates $v_0=(0,0,\sqrt{(3-\eta)/3})$, and the base vertices lie in the
  plane $z=0$.
  Let $O^* = (0,0,z^*)$ in the same Cartesian system.
  Then $t=(z^*)^2$ is one of the roots of $f$ in \eqref{f_eta}.
  For $\eta=\frac{12}{5}$, the unique solution $O^*$ coincides with $v_0$, thus the statement is true.
  We show below that for $\eta \in (\frac{12}{5},3)$, the polynomial $f$ does not vanish for
  $t \in (0,\frac{3-\eta}{3})$, in other words, that $z^* \not\in (0,\sqrt{(3-\eta)/3})$, namely
  $O^* \not\in int(T)$.

  The Sturm sequence $s$ of $f \in \Q(\eta)[t]$ with respect to $t$ is defined as the one in \eqref{sturm_g},
  and is given in \Cref{app:sturm2} together with its evaluations $s[0] = (v_0,v_1,v_2,v_3)$ and 
  $s[\frac{3-\eta}{3}] = (w_0,w_1,w_2,w_3)$ at $t=0$ and
  $t=\frac{3-\eta}{3}$, respectively. The sign pattern of $s[0]$ and $s[\frac{3-\eta}{3}]$ are respectively:
  \begin{equation*}
    s[0]:
    \hspace{0.2cm}
    \begin{cases}
      [+, -, -, +] & \text{ for } \eta \in (\frac{12}{5},v_2^*) \\
      [+,-,+,+] & \text{ for } \eta \in (v_2^*,\overline{\eta}) \\
      [+,-,+,-] & \text{ for } \eta \in (\overline{\eta},3) \\
    \end{cases}
    \hspace{1cm}
    s\left[\frac{3-\eta}{3}\right]:
    \hspace{0.2cm}
    \begin{cases}
      [+,-,-,+] & \text{ for } \eta \in (\frac{12}{5},w_2^*) \\
      [+,-,+,+] & \text{ for } \eta \in (w_2^*,\overline{\eta}) \\
      [+,-,+,-] & \text{ for } \eta \in (\overline{\eta},3) \\
    \end{cases}
  \end{equation*}
  where $v_2^* \approx 2.74$ and $w_2^* \approx 2.71$ are the unique roots of $v_2,w_2$
  in $[\frac{12}{5},3)$. Again applying Sturm's Theorem \cite[Thm.~2.62]{BPR}, we conclude that for
    every $\eta \in (\frac{12}{5}, 3)$, 
  the polynomial $f$ does not have a root in $(0,\frac{3-\eta}{3})$.
\end{proof}

We conclude with the the following result, direct
consequence of \Cref{prop_piramidal} and Lemmas \ref{lem:rbodies_triangular} and \ref{lem:rbodies_triangular2},
summarizing the connection between \Cref{sec_tetrahedra} and the theory of $R$-bodies,
for the class of triangular pyramids.

\begin{theorem}\label{thm_rbodies}
  Let $T = \conv{V}$ be the triangular pyramid with vertices $V$ and parameter $\eta$.
  \begin{enumerate}
  \item For $0 < \eta < \frac{12}{5}$, let $\rho$ be the unique real root of the polynomial $g$
    in \eqref{polG} and let $O^* = (X,Y,Y,Y)_V$ be the unique solution of system \eqref{eqnl}
    related to $\rho$. Let $R^* = \sqrt{\rho}$. Then
    \begin{equation*}
      \begin{aligned}
        & R^* > R_T \\
        & co_{R^*}(V) = V \cup \{O^*\}.
      \end{aligned}
    \end{equation*}
    and $O^* \in int(T)$.
  \item For $\frac{12}{5} \leq \eta \leq 3$, the solutions $(R^*,O^*)$ are geometrically-admissible
    but do not yield $R^*$-body configurations.
  \end{enumerate}
\end{theorem}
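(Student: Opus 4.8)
The plan is to derive both parts by combining the algebraic classification of \Cref{prop_piramidal} with the two Sturm-sequence lemmas and the $R$-body description of \Cref{finale su T well centered}. Throughout I keep $\lambda=1$ and use the numerical fact that $\frac{12}{5}=2.4<\overline{\eta}\approx 2.841$, so that the interval $(0,\frac{12}{5})$ lies strictly inside $(0,\overline{\eta})$, where \Cref{prop_piramidal}, \Cref{piramidal_cubic_onereal}, guarantees a \emph{unique} real positive value of $\rho$ --- the single real root of $g$ in \eqref{polG}, by \Cref{lem:fandg}, \Cref{lem:fandg_it2} --- together with a single associated point $O^*=(X,Y,Y,Y)_V$.

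For Part~1 I would first invoke \Cref{lem:rbodies_triangular}: for $0<\eta<\frac{12}{5}$ assumption \eqref{assumptionrhor(V)} holds, equivalently $R^*_T>R_T$. This unlocks \Cref{finale su T well centered}, which produces a \emph{uniquely determined} critical radius $R^*>R_T$ and a point $O^*\in\text{int}(T)$ carried by four spheres satisfying (i)--(iv), together with $co_{R^*}(V)=V\cup\{O^*\}$. It remains to identify this $(R^*,O^*)$ with the algebraic solution above. The key observation is that the four spheres of \Cref{finale su T well centered} satisfy (i) and (iii), so by \Cref{prop_johnson_tridimensional} the pair $(R^*,O^*)$ solves system \eqref{syst:gen:tetra}. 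Since $T$ is a triangular pyramid, its symmetry group acts transitively on the base vertices $v_1,v_2,v_3$ while fixing the axis through $v_0$; the uniqueness clause of \Cref{finale su T well centered} then forces $O^*$ to be invariant under this group, hence to lie on the axis, so that $Y=Z=W$ and $(R^*,O^*)$ is in fact a solution of the reduced system \eqref{eqnl}. Because $O^*\in\text{int}(T)$ lies strictly inside the circumball, it does not lie on the circumsphere, so it is not a trivial solution of \Cref{piramidal_northern}--\Cref{piramidal_southern}; by the uniqueness recalled above it must therefore coincide with the solution of \Cref{piramidal_cubic_onereal}. Hence $R^*=\sqrt{\rho}$ with $\rho$ the unique real root of $g$, $O^*$ is the associated point, $R^*>R_T$, $O^*\in\text{int}(T)$, and $co_{R^*}(V)=V\cup\{O^*\}$, as claimed.

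For Part~2 the endpoint $\eta=3$ is vacuous, since no solid pyramid exists there (one needs $\eta<3\lambda=3$). For $\frac{12}{5}\le\eta<3$ I would argue by the obstruction to property (iv). First, the non-trivial solutions of \Cref{prop_piramidal}, \Cref{piramidal_cubic_onereal}--\Cref{piramidal_cubic_threereal}, carry a real positive $\rho$ and a real point $O^*$ with positive distance coordinates $(X,Y,Y,Y)_V$; since $O^*$ lies on each double sphere $\mathcal{S}^{\sqrt{\rho}}_i$ one may select from each the component sphere through $O^*$, obtaining four $\sqrt{\rho}$-supporting spheres of $V$ meeting at $O^*$. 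These solutions are therefore geometrically-admissible in the sense of \Cref{def:geom_adm_trivial}. On the other hand, \Cref{lem:rbodies_triangular2} asserts that every solution $O^*$ of \eqref{eqnl} satisfies $O^*\not\in\text{int}(T)$. As an $R^*$-body configuration requires property (iv), namely $O^*\in\text{int}(T)$ (cf. \Cref{def:rbodyconf}), no such solution can furnish an $R^*$-body configuration, which is exactly the assertion of Part~2.

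The main obstacle, and the only step that is not a direct citation, is the identification in Part~1 of the $R$-body data $(R^*,O^*)$ supplied by \Cref{finale su T well centered} with the algebraic solution of \Cref{prop_piramidal}. This hinges on two points that must be handled with care: that conditions (i) and (iii) \emph{alone} place $(R^*,O^*)$ inside the solution set of \eqref{syst:gen:tetra} through \Cref{prop_johnson_tridimensional}, and that the symmetry of the pyramid, together with the uniqueness clause of \Cref{finale su T well centered}, forces $Y=Z=W$ so that the reduced system \eqref{eqnl} and its one-real-root analysis become applicable. Once these are in place, everything else is a matter of matching the unique non-trivial real solution for $\eta\in(0,\frac{12}{5})$ to the configuration guaranteed by the $R$-body theory.
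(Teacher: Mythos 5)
Your proof is correct and follows essentially the same route as the paper: \Cref{lem:rbodies_triangular} plus \Cref{finale su T well centered}, a symmetry argument forcing $O^*$ onto the axis so that the unique solution of \Cref{prop_piramidal} is recovered for Part~1, and \Cref{lem:rbodies_triangular2} ruling out property ($iv$) for Part~2. You merely spell out steps the paper leaves implicit (the identification via \Cref{prop_johnson_tridimensional}, the non-triviality of $O^*$, the admissibility in Part~2, and the vacuous endpoint $\eta=3$), which is sound but not a different argument.
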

\begin{proof}
  By \Cref{lem:rbodies_triangular}, for $\eta \in (0,\frac{12}{5})$, the Assumption in \eqref{assumptionrhor(V)}
  is satisfied. Thus by \Cref{finale su T well centered} there exists an $R^*$-body configuration for $T$
  for some $O^*$ and for some $R^*> R_T$. Remark that $O^*$ must belong to the symmetry axis of the base face,
  thus the unique solution $\rho=(R^*)^2, O^* = (X,Y,Y,Y)$ to system \eqref{eqnl} (that exists and is unique by
  \Cref{prop_piramidal}) satisfies the claim, with $\rho$ the unique real root of $g$. Moreover $O^*$ belongs
  to the segment joining the origin and the apex $v_0$, in particular, $O^* \in int(T)$.

  For $\frac{12}{5} \leq \eta < 3$, by \Cref{lem:rbodies_triangular2}, the solutions $(R^*,O^*)$ given by
  \Cref{prop_piramidal} are such that $O^* \not\in int(T)$, thus the corresponding configuration is not
  $R^*$-body in the sense of \Cref{def:rbodyconf}.
\end{proof}

\paragraph{Acknowledgments.}This work has been partially supported by INDAM-GNAMPA (2023-2024).
The second author is supported by the ANR Project ANR-21-CE48-0006-01 “HYPERSPACE”.

\appendix

\section{Prime decomposition for \Cref{prop_generalABC}}
\label{app:th2}

The generators of $\mathscr{P}_0$ in the proof of \Cref{prop_generalABC} are
\begin{equation*}
\begin{aligned}
 q_1 &= BY^2-BYZ-CYZ+CZ^2-BY-XY-CZ-XZ+2YZ+X \\
 q_2 &= BXY-CXY-BXZ+CXZ+BC-X^2-BY-CZ+YZ+X \\
 q_3 &= BCY-CXY-C^2Z+BXZ-BYZ+CZ^2-BC+CX-XZ+YZ \\
 q_4 &= B^2Y-CXY-BCZ+BXZ-BYZ+CZ^2+BC-BX-BY-CZ-XZ+YZ+X \\
 q_5 &= B^2XZ-2BCXZ+C^2XZ-B^2C+2BCX-CX^2+2BCZ-2BXZ-CZ^2+XZ \\
 q_6 &= CXY^2-2CXYZ+CXZ^2-2CXY-C^2Z-X^2Z+2CYZ+2XYZ-Y^2Z+CX \\
 q_7 &= C^2XY-CX^2Y+BCXZ-2C^2XZ+BX^2Z-CXYZ-BXZ^2+2CXZ^2-BC^2- \\
 &  -BCX+2CX^2+CXY+BCZ+2C^2Z-BXZ-2X^2Z-CYZ+XYZ-2CZ^2+YZ^2+ \\
 & +BC-2CX+2XZ-YZ.
\end{aligned}
\end{equation*}

The further components are
\begin{equation*}
  \begin{aligned}
    \mathscr{P}_1 &= \langle B-C+Y-Z, A-C+X-Z, C^3-2C^2X+CX^2-2C^2Y+2CXY+ \\
    & \hspace{3cm} +CY^2+C^2Z-2CXZ+X^2Z-2CYZ-2XYZ+Y^2Z \rangle \\
    \mathscr{P}_2 &= \langle Z, C, A-B+X-Y\rangle \\
    \mathscr{P}_3 &= \langle Y, B, A-C+X-Z\rangle \\
    \mathscr{P}_4 &= \langle X, A, B-C+Y-Z\rangle. 
  \end{aligned}
\end{equation*}

\section{Sturm sequence for \Cref{lem:rbodies_triangular}}
\label{app:sturm}

The Sturm sequence of the polynomial $g$ in \eqref{polG} is:
\begin{equation*}
  \begin{aligned}
    g_0 &= g \\
    g_1 &= (3072\eta-9216)\rho^2+(-1408\eta^2+3840\eta+1536)\rho+196\eta^3-732\eta^2+288\eta \\
    g_2 &= \frac{(832\eta^4-10560\eta^3+39264\eta^2-43776\eta-4608)\rho+539\eta^5-3483\eta^4+6666\eta^3-2880\eta^2-864\eta}{3(3-\eta)} \\
    g_3 &= -\frac{27(5\eta-12)^2(49\eta^2-135\eta-12)(7\eta-20)^2\eta^3(\eta-3)^2}{(26\eta^4-330\eta^3+1227\eta^2-1368\eta-144)^2}.
  \end{aligned}
\end{equation*}

Its values at $\rho=0$ and $\rho=\frac{3}{12-4\eta}$ are respectively
\begin{equation*}
  \begin{aligned}
  s[0] &=
  \begin{bmatrix}
    v_0 \\
    v_1 \\
    v_2 \\
    v_3
  \end{bmatrix}
  =
  \begin{bmatrix}
    27\eta^2, \\
    4\eta(49\eta^2-183\eta+72), \\
    \frac{\eta(539\eta^4-3483\eta^3+6666\eta^2-2880\eta-864)}{36(3-n)}, \\
    g_3
  \end{bmatrix}
  \\
  s\left[\frac{3}{12-4\eta}\right] &=
  \begin{bmatrix}
    w_0 \\
    w_1 \\
    w_2 \\
    w_3
  \end{bmatrix}
  =
  \begin{bmatrix}
    \frac{12\eta(12-5\eta)(2\eta^2-9\eta+12)}{(\eta-3)^2}, \\
    \frac{4(49\eta^4-330\eta^3+885\eta^2-936\eta+144)}{\eta-3}, \\
    -\frac{539\eta^6-5100\eta^5+16491\eta^4-14958\eta^3-21672\eta^2+35424\eta+3456}{36(\eta-3)^2} \\
    g_3
  \end{bmatrix}
  \end{aligned}
\end{equation*}

\section{Sturm sequence for \Cref{lem:rbodies_triangular2}}
\label{app:sturm2}

The Sturm sequence of the polynomial $f$ in \eqref{f_eta} is:
\begin{equation*}
  \begin{aligned}
    f_0 &= f \\
    f_1 &= (1296\eta-3888)t^2+(216\eta^2-432\eta)t-9\eta^2(\eta+1) \\
    f_2 &= \frac{-\eta^2(-48\eta^2+144\eta-24)t-\eta^2(5\eta^3-13\eta^2-2\eta)}{4\eta-12} \\
    f_3 &= \frac{-9(\eta-3)^2(49\eta^2-135\eta-12)\eta^3}{4(2\eta^2-6\eta+1)^2}.
  \end{aligned}
\end{equation*}

Its values at $t=0$ and $t=\frac{3-\eta}{3}$ are respectively
\begin{equation*}
  \begin{aligned}
  s[0] &=
  \begin{bmatrix}
    v_0 \\
    v_1 \\
    v_2 \\
    v_3
  \end{bmatrix}
  =
  \begin{bmatrix}
    \eta^4 \\
    -9\eta^2(\eta+1) \\
    \frac{\eta^3(5\eta^2-13\eta-2)}{4(3-\eta)} \\
    f_3
  \end{bmatrix}
  \\
  s\left[\frac{3-\eta}{3}\right] &=
  \begin{bmatrix}
    w_0 \\
    w_1 \\
    w_2 \\
    w_3
  \end{bmatrix}
  =
  \begin{bmatrix}
    9(5\eta-12)(2\eta^2-9\eta+12) \\
    63\eta^3-945\eta^2+3456\eta-3888 \\
    \frac{\eta^2(21\eta^3-109\eta^2+150\eta-24)}{4(3-\eta)} \\
    f_3
  \end{bmatrix}
  \end{aligned}
\end{equation*}

\end{document}